\newcommand{\subjclass}[2][2010]{%
  \let\@oldtitle\@title%
  \gdef\@title{\@oldtitle\footnotetext{#1 \emph{Mathematics subject classification.} #2}}%
}
\newcommand{\keywords}[1]{%
  \let\@@oldtitle\@title%
  \gdef\@title{\@@oldtitle\footnotetext{\emph{Key words and phrases.} #1.}}%
}
\numberwithin{equation}{section}
\newtheorem{theorem}{Theorem}[section]
\newtheorem{lem}[theorem]{Lemma}
\newtheorem{defi}[theorem]{Definition}
\newtheorem{pro}[theorem]{Proposition}
\newtheorem{lm}[theorem]{Lemma}
\newtheorem{coro}[theorem]{Corollary}
\newtheorem{remark}[theorem]{Remark}
\newtheorem{assumptions}[theorem]{Assumptions}
\def\eps{\varepsilon}
\def\rd{\mathbb{R}^d}
\def\naturals{\mathbb{N}}
\def\reals{\mathbb{R}}
\def\BoRd{\mathscr{B}(\rd)}
\def\ps{\Theta}
\def\reals{\mathbb{R}}
\def\probabilityspace{(\Omega, \mathscr{A}, \textsf{P})}
\def\ss{\mathbb{X}}
\def\samplespace{\mathbb{X}}
\def\ssa{\mathscr{X}}
\def\samplespace{\mathbb{X}}
\DeclareMathOperator*{\esssup}{ess\,sup}
\DeclareMathOperator*{\lesssup}{\lambda-ess\,sup}
\newcommand{\mr}{%
  \,\raisebox{-.127ex}{\reflectbox{\rotatebox[origin=br]{-90}{$\lnot$}}}\,%
}
\newcommand{\X}{\mathbb{X}}
\newcommand{\V}{\mathbb{V}}
\newcommand{\Z}{\mathbb{Z}}
\newcommand{\R}{\mathbb{R}}
\newcommand{\N}{\mathds{N}}
\newcommand{\Bcr}{\mathscr{B}}
\newcommand{\Wuno}{\mathcal{W}_1}
\newcommand{\Wdue}{\mathcal{W}_2}
\newcommand{\Wp}{\mathcal{W}_p}
\def\rd{\mathbb{R}^d}
\def\naturals{\mathbb{N}}
\def\reals{\mathbb{R}}
\newcommand{\psf}{\mathscr{T}}
\newcommand{\empiric}{\mathfrak{e}_n}
\newcommand{\ee}{\textsf{E}}
\newcommand{\pp}{\textsf{P}}
\newcommand{\ud}{\mathrm{d}}
\newcommand{\ind}{\mathds{1}}
\def\KKK{\color{black}}
\newenvironment{proofad}{\removelastskip\par\medskip
\noindent{\textbf {Proof of Theorem \ref{th:0}-{(i)}}.}
\rm}{\penalty-20\null\hfill$\square$\par\medbreak} 
\newenvironment{proofad0}{\removelastskip\par\medskip
\noindent{\textbf {Proof of Theorem \ref{th:0}-{(iii)}}.}
\rm}{\penalty-20\null\hfill$\square$\par\medbreak} 
\newenvironment{proofad2}{\removelastskip\par\medskip
\noindent{\textbf {Proof of Theorem \ref{maintrace}}.}
\rm}{\penalty-20\null\hfill$\square$\par\medbreak} 
\newenvironment{proofadS}{\removelastskip\par\medskip
\noindent{\textbf {Proof of Theorem \ref{th:0}-{(iv)}}.}
\rm}{\penalty-20\null\hfill$\square$\par\medbreak}
\newenvironment{proofadinfinity}{\removelastskip\par\medskip
\noindent{\textbf {Proof of Theorem \ref{infinity}}.}
\rm}{\penalty-20\null\hfill$\square$\par\medbreak} 
\newenvironment{proofadw1}{\removelastskip\par\medskip
\noindent{\textbf {Proof of Theorem \ref{th:0}-{(ii)}}.}
\rm}{\penalty-20\null\hfill$\square$\par\medbreak} 
\newenvironment{proofadconcrete}{\removelastskip\par\medskip
\noindent{\textbf {Proof of Theorem \ref{concretemixed}}.}
\rm}{\penalty-20\null\hfill$\square$\par\medbreak} 
\newenvironment{proofadfrancesi}{\removelastskip\par\medskip
\noindent{\textbf {Proof of Proposition \ref{francesi}}.}
\rm}{\penalty-20\null\hfill$\square$\par\medbreak} 
\author{Emanuele Dolera and Edoardo Mainini}
\title{\textbf{Lipschitz continuity of probability kernels in the optimal transport framework}}
\subjclass{60F10, 62F15}
 \keywords{Bayes formula, Bayesian consistency, Benamou-Brenier formula, Probability Kernel,
 Optimal transport, Wasserstein distance, weighted Sobolev spaces}
\date{}
\begin{document}
 \maketitle

\begin{center}
\end{center}
%

%
%
%
%
%
%

\begin{abstract}
In Bayesian statistics, a continuity property of the posterior distribution with respect to the observable variable is crucial as it expresses well-posedness, i.e., stability with respect to errors in the measurement of data. Essentially, this requires analyzing the continuity of a probability kernel or, equivalently, of a conditional probability distribution with respect to the conditioning variable.

Here, we tackle this problem from a theoretical point of view. Let $(\X, \ud_{\X})$ be a metric space, and let $\BoRd$ denote the Borel $\sigma$-algebra on $\rd$. 
Let $\pi(\cdot|\cdot) : \BoRd\times\ss \to [0,1]$ be a dominated probability kernel, i.e. of the form $\pi(\ud\theta|x)=g(x,\theta)\pi(\ud\theta)$ for some suitable function $g:\X\times\rd\to[0,+\infty)$. 
We provide general conditions ensuring the Lipschitz continuity of the mapping $\X \ni x \mapsto \pi(\cdot|x) \in \mathcal P(\rd)$ when the space of probability measures 
$\mathcal P(\rd)$ on $(\rd, \BoRd)$ is endowed with a metric arising within the optimal transport framework, such as a Wasserstein metric. In particular, 
we prove explicit upper bounds for the Lipschitz constant
in terms of Fisher-information functionals and weighted Poincar\'e constants, obtained by exploiting the dynamic formulation of the optimal transport.

Finally, we give some illustrations on noteworthy classes of probability kernels, and we apply the main results to improve on some open questions in Bayesian statistics, dealing with
the approximation of posterior distributions by mixtures and posterior consistency.
\end{abstract}

%
%
%
%
%
%
%
%


\tableofcontents

\newcommand{\Ti}{\mbox{\large $\boldsymbol\vartheta$}}
\newcommand{\Tis}{\mbox{\footnotesize $\boldsymbol\vartheta$}}

\newcommand{\Tir}{\mbox{\large $\boldsymbol\Uptheta$}}
\newcommand{\Tin}{\mbox{\footnotesize $\boldsymbol\Uptheta$}}

\newcommand{\Tft}{\mbox{\large\textfrak{S}}}
\newcommand{\dft}{d_{\mbox{\tiny\textfrak{S}}}}

\section{Introduction}

\subsection{Formulation of the problem and main contributions} \label{sect:formulation_main}

Several problems in probability and statistics involve  mappings of the form $x \mapsto \pi(\cdot | x)$, where $\pi(\cdot | \cdot) : \BoRd \times \X \rightarrow [0,1]$ is a \emph{probability kernel}. 
In general, $\X$ is a metric space endowed with distance $\ud_{\X}$ and Borel $\sigma$-algebra $\ssa$, while $\BoRd$ stands for the usual Borel $\sigma$-algebra on $\rd$.
Being any probability kernel $\pi(\cdot | \cdot)$ conceivable as a mapping from $\X$ into the space $\mathcal P(\rd)$ of all probability measures (p.m.'s) on $(\rd, \BoRd)$, our main goal is to provide general conditions for getting a global form of Lipschitz continuity, namely
\begin{equation} \label{main_problem}
\ud_{\mathcal P(\rd)}\big(\pi(\cdot | x_1), \pi(\cdot | x_2)\big) \leq L\, \ud_{\X}(x_1, x_2)\ \ \ \ \ \ \ \forall\ x_1, x_2 \in \X,
\end{equation}
where $\ud_{\mathcal P(\rd)}$ is a suitable distance on $\mathcal P(\rd)$ and $L \geq 0$. 
Of course, the problem is strongly influenced by the choice of the distance $\ud_{\mathcal P(\rd)}$ which, at least at the level of abstract theory, can be done in several ways. See, e.g., the review \cite{GibbsSu}. Here, we will focus only on the $p$-\emph{Wasserstein distance} $\Wp$, $p \geq 1$, and the \emph{total variational distance} $\ud_{TV}$ because of their mathematical tractability, their clever conception as minimal transport distances, and their relationships with other probability metrics. 
For the sake of clarity, we recall that
\begin{align*}
\ud_{TV}(\mu,\nu) &:= \sup_{A \in \BoRd} |\mu(A) - \nu(A)| \qquad\quad\qquad\qquad\qquad \forall \mu, \nu \in \mathcal P(\rd) \\
\Wp(\mu, \nu) &:= \inf_{\eta \in \mathcal{F}(\mu,\nu)} \left(\int_{\reals^{2d}} |\theta_1 - \theta_2|^p\ \eta(\ud \theta_1 \ud \theta_2) \right)^{1/p}\quad\quad \forall \mu, \nu \in \mathcal P_p(\rd),
\end{align*}
where $\mathcal P_p(\rd):= \left\{\zeta \in \mathcal P(\rd) : \int_{\rd} |\theta|^p \zeta(\ud\theta) < +\infty \right\}$ 
and $\mathcal{F}(\mu,\nu)$ denotes the class of all p.m.'s on $(\reals^{2d}, \mathscr{B}(\reals^{2d}))$ with first marginal $\mu$ and second marginal $\nu$. See, e.g., \cite{AGS,vilMass,V2} 
for further information about the $p$-Wasserstein distance. \\

 Our main results are concerned with dominated kernels of the form
\begin{equation}  \label{kernelgpi}
\pi(B|x) := \int_B g(x, \theta) \pi(\ud\theta) \quad\quad \forall\ B \in \BoRd,\quad \forall\ x \in \X\ ,
\end{equation}
with some measurable, non-negative function $g$ and some measure $\pi$ on $(\rd, \BoRd)$. In this setting, we provide novel contributions in different directions. 
First, we formulate  a general theory aimed at solving \eqref{main_problem}, with emphasis on  estimates for the Lipschitz constant $L$. See Theorem \ref{th:0} and its extensions in Section  \ref{sect:otherNew}. Second, we illustrate the new methods on some well-known classes of probability kernels, such as exponential families 
(see Subsection \ref{sect:exponential}) and certain truncation families (see Subsection \ref{wilfrido}). 
 Third, we show the usefulness of estimate \eqref{main_problem} for the solution of other allied questions, mainly of statistical nature (see Section \ref{sect:application}). 
We emphasize the following strength points of our theory: the generality of the kernels under consideration, which are not constrained to belong to specific classes; the  estimates for the constant $L$ given in terms of some well-known functionals involving $g$ and $\pi$; the focus on the 2-Wasserstein distance, for which we will take advantage of the dynamic formulation, recalled in Subsection \ref{sect:dynamic}; 
the inclusion of the non-standard case of kernels with a support that varies with $x$ (see Subsection \ref{sect:mainresults_2}).


\subsection{Basic motivations from probability and Bayesian statistics} \label{sect:literature}

A  basic motivation for the analysis of a property like \eqref{main_problem} comes from the theory of (regular) conditional distributions and its applications.
In fact, probability kernels arise naturally in connection with the {\it disintegration problem}, within the abstract measure-theoretic formulation due to Kolmogorov.   
See, e.g., Theorems 6.3 and 6.4 in \cite{ka}, and Chapters 1-5 of \cite{Rao} for an overview. For clarity, we recall the notion of disintegration, with the same notation of Subsection \ref{sect:formulation_main}: given a random vector $(X,Z)$ on a probability space $\probabilityspace$ with values in $\X\times\rd$, we say that a probability kernel
$\pi(\cdot | \cdot) : \BoRd \times \X \rightarrow [0,1]$ solves the disintegration problem if $\ee[\pi(B|X)\ind_A(X)] = \pp[X \in A, Z \in B]$ holds for any $B \in \BoRd$ and any $A\in \ssa$, where
$\ind_A$ denotes the indicator function. The well-known issue of non-uniqueness of solutions to the disintegration problem (in the sense that if $\pi_1(\cdot | \cdot)$ is a solution, then 
$\pi_2(\cdot | \cdot)$ is also a solution as soon as $\pp[\pi_1(\cdot |X) \neq \pi_2(\cdot |X)] = 0)$ introduces a remarkable gap between theory and practice, since it entails that
conditional probabilities of the form $\pp[\cdot | X = x]$ are in general meaningless for a single $x\in \X$ such that $\pp[X = x] = 0$. See the discussion about the so-called \textit{Borel paradox} in \cite{Rao}. 
However, the necessity of pointwise evaluations usually emerges in Bayesian inference (see \cite{CP} and the reference therein), statistical mechanics (see e.g. \cite{lanford}) and theory of stochastic processes (see e.g. \cite{LaGatta}), where $x$ stands for some really observed datum and the observer would like to evaluate a conditional probability exactly at $x$. 
This foundational mismatch could be overcome by introducing suitable additional conditions that grant
uniqueness in the disintegration problem: in fact, we recall that, if (the distribution of) $X$ has full support in $\X$, then there exists at most one probability kernel $\pi(\cdot | \cdot)$ satisfying both the disintegration and the property that $\X \ni x \mapsto \pi(\cdot|x) \in \mathcal P(\rd)$ is continuous with respect to the topology of weak convergence on $\mathcal P(\rd)$. 
The existence of such a continuous representative, under additional conditions on the joint distribution of $(X,Z)$, was first analyzed in \cite{zab}. 
See also Chapter 9 of \cite{tjur}. In this respect, a stronger form of continuity like \eqref{main_problem} 
expresses a quantitative stability of conditional distributions  with respect to small deviations of the observed point, in analogy with the classical notion of well-posedness introduced by Hadamard.
However, a general formalization seems still lacking, and deserves deeper investigations. \\

A specific situation of interest arises in Bayesian statistics in the case that the joint distribution of $(X,Z)$ turns out to be absolutely continuous with respect to a product measure, say 
$\lambda \otimes \pi$, on $(\X \times \rd, \ssa\otimes\BoRd)$, with density $f : \X \times \rd \to [0,+\infty)$. When a (jointly) continuous density $f$  is assigned as starting point of the analysis, the conditional
distribution of $Z$ given $X= x$ emerges more naturally from the well-known \emph{Bayes formula}, rather than a disintegration. Precisely, $\pp[Z \in \cdot | X = x]$ is given 
by a kernel of the form \eqref{kernelgpi} with
\begin{equation} \label{Bayes}
g(x, \theta) = \dfrac{f(x, \theta)}{\int_{\rd} f(x, \tau) \pi(\ud\tau)}
\end{equation}
for any $x \in \X$ such that $\int_{\rd} f(x, \tau) \pi(\ud\tau) > 0$. In this framework, very basic results aimed at proving a local form of \eqref{main_problem} are contained in our recent paper \cite{DM}, 
which is confined to the choice of the total variation distance. In the present paper, we will improve on the results of \cite{DM} by relaxing the regularity assumptions, by providing global Lipschitz continuity, and most importantly by considering  the Wasserstein distance. 
Concerning other quantitative estimates like \eqref{main_problem}, the literature is relatively scant. A fairly general approach can be found in the work \cite{S} by A.M. Stuart, 
who minted the expression \emph{Bayesian well-posedness} for a local version of \eqref{main_problem}. See Subsection 4.2 of \cite{S}. See also the discussion about well-posedness in \cite{LATZ}. 

Another strong motivation from Bayesian inference is the following. Let us consider again the evaluation of the conditional probability $\pp[Z\in \cdot|X= x]$. 
Besides disposing of a specific datum $x\in\X$, we assume the presence of some noise in the process of observation. This leads us to interpret $x$ as a realization of
$\varphi_{\varepsilon}(X)$ rather than of $X$ itself, where $\varphi_{\varepsilon} : \X \to \X$ is some random perturbation of the identity map, stochastically independent of $(X,Z)$.
If we dispose of some \textit{apriori} bound
(pointwise or in the mean) on the discrepancy between $\varphi_{\varepsilon}$ and the identity map, we could exploit a property like \eqref{main_problem} to get a bound on the discrepancy between 
the conditional distributions $\pp[Z\in \cdot|X= x]$ and $\pp[Z\in \cdot|\varphi_{\varepsilon}(X)= x]$. That is, \eqref{main_problem} highlights the impact of the perturbation of the data in inference.
This remark is of some relevance in the recent studies on \textit{differential privacy}. See, e.g., \cite{BernShel,KJKH}. \\

\subsection{Further motivations and applications} 

We present a short list of problems that further motivate our analysis and represent the main applications of our theory. We shall provide new explicit solutions to such problems in Section \ref{sect:application}, by stressing the key role of \eqref{main_problem}. We also mention some related works in the literature, that often make use a property like of \eqref{main_problem} only as a technical tool. 
\begin{itemize}
\item[a)] \textit{Bayesian well-posedness}. In the same spirit of \cite{S}, by Bayesian well-posedness we mean
the validity of a local version of \eqref{main_problem} along with \eqref{kernelgpi}-\eqref{Bayes}. This notion have been investigated in the context of Bayesian inverse problems 
in \cite{S,CDRS,DS,ILS,LATZ,SULL,TRILL}. Due to their specific focus,
these papers only deal with kernels arising from linear regression problems, which are in exponential form. 
In Subsections \ref{sect:exponential} and \ref{sect:nobservations} we also analyze Bayesian well-posedness with exponential kernels
and, by applying our main results from Section \ref{section:mainresults}, we provide new estimates for the Lipschitz constant. 
We also consider the customary situation of an inference process with multiple exchangeable observations. Other new results on Bayesian well-posedness will be given in 
Subsection \ref{wilfrido} where we analyze Pareto-like statistical models. 
\item[b)] \textit{Approximation of posterior distributions by mixtures}. This problem arises in Bayesian inference when the posterior is not expressible in closed form. To carry out the inferential procedures,
a possible strategy is to approximate the prior by a mixture of conjugate prior (conjugation being referred to the statistical model), leading to an approximated posterior which is again in the form of a mixture.
Here, property \eqref{main_problem} yields a bound for the error in approximating the posterior, besides a more precise characterization of the posterior weights. See Subsection \ref{sect:renyi}.   
See also \cite{West} for developments in parametric settings, \cite{resa} for the nonparametric approach, and \cite{scricciolo} for density estimation. In particular, 
Proposition 2 of \cite{resa} is an evident application of \eqref{main_problem}. 
\item[c)] \textit{Bayesian consistency}. The foundational topic of frequency validation of Bayesian procedures (see \cite{diafree1} and \cite[Chapter 6]{GV}) can be rewritten as an approximation 
problem between posterior distributions. See Subsection \ref{sect:consistency} along with our recent contributions \cite{CDFM,Dolera,DFM}, where \eqref{main_problem} is at the core of the main argument. 
\end{itemize}
Finally, we foresee a number of other interesting applications that, for reason of space, are not developed in this paper. Thus, we just mention the field of: 
\textit{Bayesian robustness} (see \cite{MiDu}); \textit{Bayesian deconvolution and empirical Bayes methods} (see \cite{EfronEB}); \textit{theory of computability} (see \cite{AFR}).
Hopefully, a general theory of Bayesian well-posedness could bring novel contributions also to these fields.

\section{Main Results} \label{section:mainresults}

\subsection{Lipschitz estimates in terms of $\ud_{TV}$, $\mathcal W_1$ and $\mathcal W_2$} \label{sect:mainresults_1}

The results of this subsection are concerned with kernels of the form \eqref{kernelgpi} which fulfill the following
\begin{assumptions} \label{ass:1}\rm
Let $\pi$ be a p.m. on $(\rd, \BoRd)$ such that $\text{supp}(\pi) = \overline\Theta$ for some (nonempty) connected open set $\Theta \subseteq \rd$, and $\pi(\partial\Theta)=0$. 
Let $\X$ be a convex open subset of $\R^m$, endowed with the reference $\sigma$-algebra $\ssa$ of all Lebesgue-measurable subsets of $\ss$.
Finally, let the function $g$ be an element of $L^1_{\mathcal L^m\otimes\pi}(\X \times \ps)$ with $\int_{\ps} g(x,\theta)\pi(\ud\theta)=1$ for all $x \in \ss$, where $\mathcal L^m$ denotes the $m$-dimensional Lebesgue measure.
\end{assumptions}
In the main theorem, we will also assume that $g\in L^1_\pi(\ps; W^{1,1}_{loc}(\X))$, meaning that the distributional gradient of the mapping $x\mapsto g(x,\theta)$  (denoted by $\nabla_x$) belongs to $L^1_{loc}(\X)$ for $\pi$-a.e. $\theta\in \ps$ and that $\nabla_x g\in L^1_{\mathcal L^m\otimes\pi}(\tilde{\X}\times\ps)$ for any open set $\tilde{\X}$ compactly contained in $\X$.
In such a case, if $g(x,\cdot)>0$ for $\pi$-a.e. $\theta\in\ps$,  we define the {\it Fisher functional} of $g$ relative to $\pi$ as
\begin{equation} \label{fff}
\mbox{$\;\mathcal{J}_{\pi}[g(x,\cdot)] := \displaystyle\left(\int_\ps \frac{|\nabla_x g(x,\theta)|^2}{g(x,\theta)}\, \pi(\ud\theta)\right)^{\frac12}.$}
\end{equation}

Another key assumption  of the theory will be the validity of the so-called \emph{weighted Poincar\'e-Wirtinger inequalities}. We say that a Radon measure $\mu$  on $(\ps, \psf)$ satisfies a weighted Poincar\'e-Wirtinger inequality of order 
$q \in [1,+\infty)$ if a constant $\mathcal C_q$ exists such that 
\begin{equation}\label{wirtinger}
\inf_{a\in\R}\left(\int_\ps |\psi(\theta)-a|^q\,\mu(\ud\theta)\right)^{\frac1q} \le \mathcal C_q\left(\int_\ps |\nabla \psi(\theta)|^q\,\mu(\ud\theta)\right)^{\frac1q}
\end{equation}
holds for every  $\psi\in C^1_c(\overline\ps)$. Here, $\psi\in C^1_c(\overline\ps)$ means that $\psi$ is the restriction to $\overline\ps$ of a $C^1$ compactly supported function on $\rd$.
We denote by $\mathcal C_q[\mu]$ the best constant in such inequality and we put $\mathcal C[\mu]:=\mathcal C_2[\mu]$. Further details are contained in Subsection \ref{sectionpoincare}. 

We also  consider (unweighted) Sobolev-Poincar\'e inequalities. Let either $1\le p<d$ or $1=p=d$. We say that $\ps$ satisfies a Sobolev-Poincar\'e inequality of order $p$ if a constant 
$\mathcal S_{p}$ exists such that
\begin{equation}\label{sobineq}
\inf_{a\in\R}\|\zeta-a\|_{L^{p^*}(\ps)}\le \mathcal S_{p}\,\|\nabla \zeta\|_{L^p(\Theta)}\qquad \mbox{for any $\zeta\in C^1_c(\overline\ps)$},
\end{equation}
where $p^*=\tfrac{dp}{d-p}$ if $1\le p<d$ and $p^*=+\infty$ if $p=d=1$. We denote by $\mathcal S_{p}(\ps)$ the corresponding best constant.

\begin{theorem} \label{th:0}
For a given kernel $\pi(\cdot|\cdot)$ in the form \eqref{kernelgpi}, let Assumptions \ref{ass:1} be in force. Let also $g\in L^1_\pi(\Theta; W^{1,1}_{loc}(\ss))$. 
When $\Wp$ is involved, assume further that $\int_\ps |\theta|^p\,\pi(\ud\theta|x) <+\infty$ for 
$\mathcal L^m$-a.e. $x \in \X$.
The following statements hold.  
\begin{itemize}
\item[\bf{(i)}] Suppose that 
$$
K:=\displaystyle\esssup_{x\in\X}\,\|\nabla_x g(x,\cdot)\|_{L^1_\pi(\ps)}<+\infty.
$$  
Then, there exists a $\ud_{TV}$-Lipschitz version of $\pi(\cdot|\cdot)$, satisfying \eqref{main_problem} with $L=K/2$. 
\item[\bf(ii)] For $1<p\le+\infty$ and $q=\tfrac p{p-1}$, suppose that 
$$
K:=\pi(\Theta)^{1/q}\,\displaystyle \mathcal C_{q}[\pi]\,\esssup_{x\in\X} \|\nabla_x g(x,\cdot)\|_{L^p_\pi(\ps)} <+\infty.
$$   
Then, there exists a $\Wuno$-Lipschitz version of $\pi(\cdot|\cdot)$, satisfying \eqref{main_problem} with $L=K$.  
\item[\bf(iii)]  If $g>0$ $(\mathcal L^m\otimes\pi)$-a.e. and 
$$
K:=\displaystyle\esssup_{x \in \X }\;\mathcal{C}[g(x,\cdot)\,\pi]\,\mathcal{J}_{\pi}[g(x,\cdot)] <+\infty,
$$ 
then there exists a $\mathcal W_2$-Lipschitz version of $\pi(\cdot|\cdot)$, satisfying \eqref{main_problem} with $L=K$.  
\item[\bf(iv)] 
If $g>0$   $(\mathcal L^m\otimes\pi)$-a.e., $\pi=\mathcal L^d\mr\Theta$ and
$$ 
K:=\displaystyle \mathcal S_{p}(\Theta)\,\esssup_{x\in\X} \left\|\frac1{g(x,\cdot)}\right\|_{L^{\frac p{2-p}}(\ps)}^{1/2}\left\|\nabla_x g(x,\cdot)\right\|_{L^{\frac r{r-1}}(\ps)}<+\infty,
$$
then there exists a $\mathcal W_2$-Lipschitz version of $\pi(\cdot|\cdot)$, satisfying \eqref{main_problem} with $L=K$.  
\end{itemize}
\end{theorem}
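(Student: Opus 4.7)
The four parts admit a unified strategy: decompose $g(x_2,\cdot)-g(x_1,\cdot)$ along the segment $\mathbf s_{x_1,x_2}$ defined in \eqref{esse} via the fundamental theorem of calculus, then control the resulting $\nabla_x g$-weighted integrals through a functional inequality appropriate to the chosen distance on $\mathcal P(\ps)$.

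For parts (i) and (ii) I would start from the representation
\begin{equation*}
\ud_{TV}\bigl(\pi(\cdot|x_1),\pi(\cdot|x_2)\bigr)=\tfrac12\!\int_\ps\!\!|g_{x_1}-g_{x_2}|\,\ud\pi
\end{equation*}
and, respectively, the Kantorovich--Rubinstein duality for $\Wuno$. Writing $g_{x_2}-g_{x_1}=\int_0^1\nabla_x g(\mathbf s_{x_1,x_2}(t),\cdot)\cdot(x_2-x_1)\,\ud t$ and exchanging integrals, part (i) follows at once from the uniform $L^1_\pi$-bound on $\nabla_x g$. For (ii) I would subtract a constant $a$ from the $1$-Lipschitz test $\psi$ (permissible since $\int\nabla_x g\,\ud\pi=0$ by differentiating $\int g\,\ud\pi=1$), apply Hölder with conjugate exponents $(p,q)$, invoke the weighted Poincaré inequality $\|\psi-a\|_{L^q_\pi}\le\mathcal C_q[\pi]\|\nabla\psi\|_{L^q_\pi}$, and exploit the trivial bound $\|\nabla\psi\|_{L^q_\pi}\le\pi(\ps)^{1/q}$.

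For (iii) and (iv) I would follow the dynamic PDE framework of Subsection \ref{sect:dynamic}: Benamou--Brenier along the segment \eqref{esse} reduces the Wasserstein estimate to a uniform-in-$x$ bound on $\|\nabla u_x^\nu\|_{L^2_{g_x\pi}}$, where $u_x^\nu$ weakly solves \eqref{abstractelliptic}. Testing that equation with $u_x^\nu-a$, where the constant $a$ is free to choose since $\int\partial_\nu g_x\,\ud\pi=0$, yields the identity
\begin{equation*}
\|\nabla u_x^\nu\|_{L^2_{g_x\pi}}^2=\int_\ps(\partial_\nu g_x)(u_x^\nu-a)\,\ud\pi.
\end{equation*}
For (iii), I would split $\partial_\nu g_x=(\partial_\nu g_x/\sqrt{g_x})\,\sqrt{g_x}$ and apply Cauchy--Schwarz to obtain $\mathcal J_\pi[g_x]\|u_x^\nu-a\|_{L^2_{g_x\pi}}$; the weighted Poincaré inequality for the probability measure $g_x\pi$ then replaces this by $\mathcal C[g_x\pi]\|\nabla u_x^\nu\|_{L^2_{g_x\pi}}$, and dividing concludes. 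For (iv), with $\pi=\mathcal L^d\mr\ps$, I would instead use Hölder with conjugate exponents $(r/(r-1),r)$ for $r=p^*$ to produce $\|\nabla_x g_x\|_{L^{r/(r-1)}}\|u_x^\nu-a\|_{L^{p^*}}$, then the Sobolev--Poincaré inequality \eqref{sobineq} to pass to $\mathcal S_p(\ps)\|\nabla u_x^\nu\|_{L^p}$, and finally a second Hölder step with exponents $(2/p,2/(2-p))$ to obtain $\|\nabla u_x^\nu\|_{L^p}\le\|\nabla u_x^\nu\|_{L^2_{g_x}}\|g_x^{-1}\|_{L^{p/(2-p)}}^{1/2}$. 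Absorbing one factor of $\|\nabla u_x^\nu\|_{L^2_{g_x}}$ yields the claimed constant.

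The genuinely hard point is the rigorous underpinning of the PDE argument for (iii) and (iv): one must produce a weak solution of the degenerate Neumann problem \eqref{abstractelliptic} in a weighted Sobolev space compatible with the weight $g_x\pi$ and argue that it has gradient form, so that the test-function computation is legitimate and Benamou--Brenier actually delivers an equality (or at least an optimal upper bound). A secondary issue common to all four parts is that the $\esssup$ hypotheses yield only an a.e.-in-$x$ estimate; upgrading this to a bona fide Lipschitz \emph{version} of $x\mapsto\pi(\cdot|x)$ will require a density/continuity argument exploiting completeness of the target metric space on $\mathcal P(\ps)$ together with the equivalence of $\chi$ and $\lambda\mr\X$.
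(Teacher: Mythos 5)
Your outline is correct, and for parts (i) and (ii) it coincides with the paper's proof: the paper likewise reduces everything to the bound $|\nabla_x G_\psi(x)|\le K\,\mathrm{Lip}(\psi)$ (resp.\ $\le K\sup|\psi|$) for $G_\psi(x)=\int_\Theta\psi(\theta)\,g(x,\theta)\,\pi(\ud\theta)$, obtained by exactly your chain of H\"older, weighted Poincar\'e and $\|\nabla\psi\|_{L^q_\pi}\le\pi(\Theta)^{1/q}\mathrm{Lip}(\psi)$, followed by the countable-dense-set and completeness upgrade you mention at the end. For (iii) and (iv) you take the primal route: solve the degenerate Neumann problem \eqref{abstractelliptic}, test it with $u_x^\nu-a$, and run Cauchy--Schwarz/Poincar\'e (resp.\ H\"older, Sobolev--Poincar\'e, H\"older) on the resulting energy identity; you correctly flag the existence of such a weak solution in gradient form as the hard point. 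The paper instead works dually: in Theorem \ref{abs} the tangent velocity field is produced by the Riesz representation theorem in the Hilbert space $\overline{\{\nabla\psi:\psi\in C^1_c(\overline\Theta)\}}^{L^2_{\mu_t}}$, with norm equal to the dual norm $\Psi(t)=\sup\bigl\{\tfrac{\ud}{\ud s}\int\psi\,\ud\mu_s\big|_{s=t}:\int|\nabla\psi|^2\,\ud\mu_t\le1\bigr\}$, and this dual norm is then estimated by applying your very same chain of inequalities to an arbitrary test function $\psi$ rather than to the unknown $u_x^\nu$. The two computations are line-by-line identical (including your H\"older factorization $\|\nabla\psi\|_{L^p}\le\|\nabla\psi\|_{L^2_{g_x}}\|g_x^{-1}\|_{L^{p/(2-p)}}^{1/2}$ and the identification $r=p^*$), but the dual formulation makes existence automatic and the gradient structure a consequence of Riesz, so the difficulty you single out simply does not arise. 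One ingredient your sketch glosses over is that before invoking Benamou--Brenier one must know that the curve $t\mapsto\pi(\cdot|\mathbf s_{x_1,x_2}(t))$ is narrowly continuous with $t\mapsto\int\psi\,\ud\pi(\cdot|\mathbf s_{x_1,x_2}(t))$ absolutely continuous for all test functions simultaneously; the paper secures this by first running the (ii)-type argument to extract a $\Wuno$-Lipschitz representative $\pi^*$, then applying the dynamic estimate to $\pi^*$ along $\mathcal L^{m-1}$-almost every line in each direction, and concluding for all pairs $x_1,x_2$ by the narrow lower semicontinuity of $\Wdue$.
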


We notice that the assumption $g>0$ $(\mathcal L^m\otimes\pi)$-a.e., made in points {\bf (iii)}-{\bf (iv)}, entails that support of the p.m. $\pi(\cdot|x)$ coincides with $\overline\Theta$ for 
$\mathcal L^m$-a.e. $x$ in $\ss$. We will refer to this fact in the sequel by saying we are in the case of {\it fixed domains}. 
In Subsection \ref{sect:mainresults_2} we will provide other new results that generalize points {\bf (iii)}-{\bf (iv)} to the situation of {\it moving domains}, meaning that the 
support of $\pi(\cdot|x)$ is allowed to vary smoothly with $x$.
Concerning the first two points of Theorem \ref{th:0}, we remark that the assumptions of point {\bf (ii)} imply the ones of point {\bf (i)}, which is formally the limit case $p=1$ of point {\bf (ii)}. On the other hand,
if $\Theta$ is bounded, the elementary inequality $\mathcal W_1\le 2\mathrm{diam}(\Theta) \ud_{TV}$ allows to deduce an estimate for the $\mathcal W_1$ distance under the assumptions of point {\bf (i)}. 
Moreover, we notice that point {\bf (iv)} holds whenever $\ps$ is a domain for which the Sobolev-Poincar\'e inequality \eqref{sobineq} is satisfied. Therefore, 
point {\bf (iv)} applies for instance if $\ps$ is the whole of $\rd$ 
(and $\mathcal S_{p}(\rd)$ is explicit, see \cite{Au, Talenti}) or if $\Theta$ is a $W^{1,p}$ extension domain with $\mathcal L^d(\ps)<+\infty$ (see, e.g., \cite[Chapter 12]{L}). More generally, it applies if $\ps$ is a John domain (see \cite{BK, CWZ, H1}), including the half space and domains with compact Lipschitz boundary. If $d=1$, \eqref{sobineq} holds on any interval $\ps\subseteq\R$ with $\mathcal S_1(\ps)=1$.

We conclude with a brief discussion about the best constant in the weighted Poincar\'e-Wirtinger inequality  \eqref{wirtinger}.
The most classical Poincar\'e inequalities hold  by taking $\mu$ to be the $d$-dimensional  Lebesgue measure on a bounded domain $\Theta$ with Lipschitz boundary and $q=2$: the reciprocal square of  $\mathcal C[\mathcal L^d\mr\Theta]$ is the first nontrivial eigenvalue of the  Neumann Laplacian on $\Theta$. 
If $\mu$ is the $d$-dimensional Lebesgue measure on a bounded convex set $\Theta\subset\rd$, the classical result by Payne and Weinberger \cite{PW} shows that $\mathcal C[\mathcal L^d\mr\Theta]$ is proportional to the diameter of $\Theta$, see also \cite{AD, ENT, bebendorf} for  $q\neq 2$. Explicit estimates  for star-shaped domains are found in \cite{FR}.
 According to the Bakry-Emery condition in the Euclidean setting (see \cite{BE}), if $\mu$ is a p.m. on a convex set $\Theta\subseteq\rd$   and  $V\in C^2(\Theta)$ exists such that 
\begin{equation}\label{be}
\mu(\ud\theta)=e^{-V(\theta)}\,\ud\theta,\;\; \mbox{$\qquad\qquad\langle\mathrm{Hess}[V]\xi,\xi\rangle\ge\alpha>0\;\;$ in $\Theta\;\quad$  for any $\xi\in\rd$},
\end{equation}
then  \eqref{wirtinger} holds with   $\mathcal C[\mu]\le 1/\sqrt{\alpha}$. See for instance \cite{MS} or \cite[Chapitre 5]{ABC}, see also \cite{chafai, MJT}. 
On the other hand, it is shown in \cite{FNT} that for any log-concave measure $\mu$ on a bounded convex domain $\Theta$ of $\rd$ (i.e., for any convex $V$), the constant $\mathcal C[\mu]$ can be bounded explicitly by 
$\mathrm{diam}(\Theta)/\pi$. Therefore, the Poincar\'e best constant can be improved by the presence of a log-concave weight with $\alpha>0$  (the unweighted case corresponding here to $V=0$). Let us also mention the result by Bobkov \cite{B} which allows to estimate the Poincar\'e constant of a log-concave measure $\mu$ on $\rd$ in terms of the variance, i.e.,
$\mathcal C[\mu]\le 12\sqrt{3} \left(\int_{\rd}|\theta-\overline\mu|^2\,\mu(\ud\theta)\right)^{1/2}$ with 
$\overline\mu:=\int_{\rd}\theta\,\mu(\ud\theta)$.
The fundamental Bakry-Emery citerion admits other generalizations. For instance, \eqref{wirtinger} holds on $\rd$ if the condition $\frac12|\nabla V(\theta)|^2-\Delta V(\theta)\ge c>0$ is satisfied for any large enough 
$|\theta|$, see for instance \cite{BBCG, BCG}. Different results are also available for measures of the form $\mu=e^{-V}\nu$, where $\nu$ itself satisfies \eqref{wirtinger}, the most simple instance being the Holley-Stroock \cite{HS} perturbation principle $\mathcal C^2[\mu]\le\exp\{\sup V-\inf V\}\,\mathcal C^2[\nu]$. See e.g. \cite[Th\'eor\`eme 3.4.1]{ABC} or \cite{MS}. 
Further statements in this direction are contained in \cite[Proposition 4.1]{C}, where it is assumed that $\mu$ satisfies the stronger log-Sobolev inequality. 

\subsection{A remarkable example: Exponential models} \label{sect:exponential}

A useful rephrasing of the main results from Theorem \ref{th:0} can be obtained, in Bayesian statistical inference, 
by considering a statistical model in the following form:
\begin{equation}\label{fgphi}
f(x|\theta)=e^{\Phi(x,\theta)}h(x),\qquad g(x,\theta)=\frac{f(x|\theta)}{\rho(x)}=\frac{e^{\Phi(x,\theta)}}{\int_\Theta e^{\Phi(x,\tau)}\pi(\ud\tau)}
\end{equation}
for some measurable functions $h : \ss \to (0,+\infty)$ and $\Phi: \ss \times \Theta \to \reals$. Here,  $\pi$ denotes the prior probability measure and $\rho(x)= h(x)\int_\Theta e^{\Phi(x,\theta)}\,\pi(\ud\theta)>0$ for any $x\in\ss$. The function $g$ in \eqref{fgphi} is therefore obtained by applying the Bayes formula. Under the formulation \eqref{fgphi}, the Fisher functional $\mathcal J_{\pi}[g]$ defined in \eqref{fff} can be formally rewritten as 
\begin{equation}\label{newJ}\begin{aligned}
&\mathcal J_{\pi}[g(z,\cdot)] = \left(\int_\Theta |\Psi(z,\theta)|^2 g(z,\theta)\,\pi(\ud\theta)\right)^{\frac12},\\&\quad\mbox{where}\quad \Psi(x,\theta):=\nabla_x\Phi(x,\theta) - \int_\Theta\nabla_x \Phi(x,\tau)\,g(x,\tau)\,\pi(\ud\tau). \end{aligned}
\end{equation}
It is worth noticing that the mapping $\theta \mapsto \Psi(x,\theta)$ satisfies the null-mean property, i.e. $\int_\Theta\Psi(x,\theta)\,g(x,\theta)\,\pi(\ud\theta)=0$ for any $x\in\ss$, which 
allows a further application of the Poincar\'e inequality \eqref{wirtinger}. Therefore, Theorem \ref{th:0}-{\bf (iii)} can be revisited as follows.
\begin{coro}\label{coro:poincare} 
Let $\Phi\in C^1(\mathbb X\times\Theta)$ be such that $\theta \mapsto \nabla _x\Phi(x,\theta)$ is Lipschitz, for any $x\in \X$. Given a positive and measurable function
$h:\X\to\R$, let $f,g,\Psi$ be defined by \eqref{fgphi} and \eqref{newJ}. If $\int_\ps |\theta|^2 g(x,\theta)\,\pi(\ud\theta)<+\infty$ holds for every $x\in\X$ and
\begin{equation}\label{productassumption2}
K:=\esssup_{x\in\X}\; \left(\mathcal{C}[g(x,\cdot)\,\pi)]\right)^2\,\left(\int_\ps |\nabla_\theta \Psi(x,\theta)|^2\,g(x,\theta)\,\pi(\ud\theta)\right)^{\frac12} <+\infty, 
\end{equation}
then the probability kernel $\pi(\cdot|\cdot)$, defined by \eqref{kernelgpi} and \eqref{fgphi}, satisfies \eqref{main_problem} with $\ud_{\mathcal P(\rd)} = \Wdue$ and $L=K$.
\end{coro}

\begin{proof} 
The regularity  of $\Phi$ ensures that $\nabla_x\int_\Theta e^{\Phi(x,\theta)}\,\pi(\ud\theta)=\int_\Theta e^{\Phi(x,\theta)} \nabla_x\Phi(x,\theta)\,\pi(\ud\theta)$, and thanks to this property a computation immediately shows that \eqref{newJ} holds for every $x\in\X$.
As already mentioned, $\int_\Theta\Psi(x,\theta)\,g(x,\theta)\,\pi(\ud\theta)=0$ for any $x\in\ss$, thus an application of the Poincar\'e inequality \eqref{wirtinger} yields 
\[
\mathcal C[g(x,\cdot)\,\pi]\,\mathcal J_\pi[g(x,\cdot)]\le \left(\mathcal{C}[g(x,\cdot)\,\pi)]\right)^2\,\left(\int_\Theta |\nabla_\theta \Psi(x,\theta)|^2\,g(x,\theta)\,\pi(\ud\theta)\right)^{\frac12}
\]
for every $x\in\X$. The conclusion follows from Theorem \ref{th:0}-{\bf (iii)}.
\end{proof}

This corollary allows us to easily deal with statistical models $f(\cdot | \cdot)$ belonging to the well-known \emph{exponential family}. See, e.g., \cite{barndorff,brown} for a comprehensive treatment 
of the exponential family from the point of view of classical statistics, and \cite{diayilv} for a Bayesian approach. For the canonical exponential family, 
we consider two measurable functions $T : \ss \rightarrow \rd$ and $h : \ss \to (0,+\infty)$. Upon putting 
$
\Theta = \left\{\theta\in \rd\ :\ \int_{\ss} e^{T(x) \cdot \theta} h(x)\,\ud x <+\infty \right\},
$
the function $\Phi$ assumes the form 
\begin{equation} \label{Phi_exponential}
\Phi(x,\theta) = T(x) \cdot \theta - M(\theta),\qquad\mbox{with}\quad M(\theta) := \log \int_{\ss} e^{T(x) \cdot \theta} h(x)\,\ud x.
\end{equation}
In addition, we recall the standard \emph{regularity conditions} for the canonical exponential family: $\ps$ is a nonempty open subset of $\rd$ and the interior of the convex hull of the support of 
$h \circ T^{-1}$ is assumed to be nonempty. 
Under such conditions $\ps$ proves to be convex, while $M : \ps \to \R$ turns out to be strictly convex, analytic and steep (cf. Definition 3.2 of \cite{brown}). These considerations allows further
estimates on the Poincar\'e constant in \eqref{productassumption2}, according to the discussion of Subsection \ref{sect:mainresults_1}. Finally, if the function $T$ belongs to $C^1_b(\X; \rd)$, the integral term in \eqref{productassumption2} is formally re-written according to 
$$
\left(\int_\Theta |\nabla_\theta \Psi(x,\theta)|^2\,g(x,\theta)\,\pi(\ud\theta)\right)^{\frac12} =  |\nabla T(x)|\ . 
$$
In this setting, we can further refine Corollary \ref{coro:poincare}, thanks to the Bakry-Emery criterion \eqref{be}, by stating the following
\begin{pro} \label{pro_exponential}
Consider a statistical model from the exponential family with a Lipschitz continuous $T$, $h : \ss \to (0,+\infty)$, $\Theta$ and $M$ as above. 
Let $\pi(\ud\theta) = e^{-W(\theta)}\ud\theta$ with $W \in C^2(\ps)$. If $\mathrm{Hess}[M+W] \geq \alpha I$ on $\ps$ in the sense of quadratic forms for some $\alpha > 0$, and 
$
\int_{\Theta} |\theta|^2 \exp\{T(x) \cdot \theta - M(\theta) - W(\theta)\}\ud \theta < +\infty
$ 
for every $x\in \ss$, then the posterior distribution $\pi(\cdot|\cdot)$, defined by \eqref{kernelgpi} and \eqref{fgphi}, satisfies \eqref{main_problem} with distance $\Wdue$ and $K = \mathrm{Lip}(T)/\alpha$. 
\end{pro}

\begin{remark}\rm
Because of their frequent use in practical statistical context, the exponential family is often rewritten under different re-parametrizations, both of the parameter and the data. Of course, property \eqref{main_problem} depends crucially on the specific parametrization, and can fail after a re-parametrization. For example, the re-parametrization of the parameter in terms of the mean (see, for example, Chapter 3 of \cite{brown}) preserves the Lipschitz continuity if $\nabla M:\Theta\to\rd$ is itself Lipschitz. Apropos of the re-parametrization of the data, very often the sufficient statistics $T$ is 
itself viewed as the datum, which leads to a simpler problem. See Subsection \ref{sect:nobservations} below.
\end{remark}

\begin{remark}\rm
If $\Theta=\rd$ and $\alpha=0$ in Proposition \ref{pro_exponential},  an alternative estimate of the $\Wdue$-Lipschitz constant in \eqref{main_problem} is $L\le 12\sqrt{3}\,\mathrm{Lip}(T)\,\mathrm{Var}(e^{M+V})$, in view of an already recalled result by Bobkov \cite{B}. Further variants can be obtained by applying Proposition \ref{francesi} in the Appendix.
\end{remark}

\section{Applications} \label{sect:application}

\subsection{Statistical inference with $n$ exchangeable observations} \label{sect:nobservations}

In concrete statistical applications, it is customary to consider the observed datum $x$ as a vector $(x_1, \dots, x_n)$ containing the outcomes of $n$ experiments. 
Accordingly, the space $\ss$ mentioned in Subsection \ref{sect:formulation_main}
becomes a product space, say $\ss_1^n$.
In the Bayesian approach, the vector $(x_1, \dots, x_n)$ is viewed as the realization of some random vector, say $(X_1, \dots, X_n)$, and the core of the analysis hinges on the stochastic dependence between the components of this random vector. In particular, when the experiments are performed under ``ideally similar physical conditions'' the order in which the outcomes are collected becomes 
irrelevant. This intuitive, practical observation is captured by the notion of \emph{exchangeability}, introduced by B. de Finetti. See \cite{Ald(85)} for a comprehensive reference on exchangeability, and Section 2.12 of \cite{GDS} for a statistical perspective.

Here, we illustrate how to apply our theory of Lipschitz-continuous kernels within the field of statistical inference with $n$ exchangeable observations, lending our results a more statistical flavour and giving a deeper insight into the concept of ``Bayesian well-posedness''. First, we recall that a sequence $\{X_i\}_{i \geq 1}$ of $\ss_1$-valued random variables, defined on $\probabilityspace$, is exchangeable if the identity $\pp[X_1 \in A_1, \dots, X_n \in A_n] = \pp[X_1 \in A_{\sigma_n(1)}, \dots, X_n \in A_{\sigma_n(n)}]$ is fulfilled 
for any $n \in \naturals$, permutation $\sigma_n : \{1, \dots, n\} \rightarrow \{1, \dots, n\}$ and $A_1, \dots, A_n \in  \mathscr X_1$, where $\mathscr X_1$ is a $\sigma$-algebra on $\ss_1$. 
Under fairly general assumptions (e.g., when $\ss_1$ is a Polish metric space and $\mathscr X_1$ coincides with its Borel $\sigma$-algebra), de Finetti's representation theorem states that the law of the observations can be written as
$\pp[X_1 \in A_1, \dots, X_n \in A_n] = \int_{\mathbb T} \big[\prod_{i=1}^n \nu(A_i\ |\ \theta)\big] \pi(\ud\theta)$,
where $(\mathbb T, \mathscr T)$ is a suitable measurable space (the \emph{parameter space}), $\pi$ is a prior p.m. on $(\mathbb T, \mathscr T)$, and $\nu : \mathscr X_1 \times \mathbb T 
\rightarrow [0,1]$ is a kernel representing the statistical model for any single observation. If we suppose that the family $\{\nu(\cdot\ |\ \theta)\}_{\theta \in \mathbb T}$ of p.m.'s is dominated by some 
$\sigma$-finite measure $\lambda_1$ on $(\ss_1, \mathscr X_1)$, with relative density $f(\cdot | \theta)$, then, by resorting to the Bayes formula \eqref{Bayes}, the posterior distribution of the random parameter given the observations can be written as
\begin{equation} \label{posterior_exchangeable}
\pi_n(\ud\theta\ |\ x_1, \dots, x_n) := \frac{\left[\prod_{i=1}^n f(x_i | \theta)\right] \pi(\ud\theta)}{\int_{\mathbb T} \left[\prod_{i=1}^n f(x_i | \tau)\right] \pi(\ud\tau)}
\end{equation}
for any $(x_1, \dots, x_n) \in \ss_1^n$ such that $\int_{\mathbb T} \left[ \prod_{i=1}^n f(x_i | \tau) \right] \pi(\ud\tau) > 0$. Moreover, from a classical perspective, the product $\prod_{i=1}^n f(x_i | \theta)$, when viewed as a function of $\theta$, represents the \emph{likelihood function} $L_n(\theta; x_1, \dots, x_n)$. Hence, with a view to highlighting the role of our theory, we focus on the appealing situation in which there exists a \emph{classical sufficient statistics}. By the well-known Fisher-Neyman factorization criterion, we recall that a measurable mapping 
$\mathfrak{t}_n : (\ss_1^n, \mathscr X_1^n) \rightarrow (\mathbb{S}, \mathscr{S})$ is named a classical sufficient statistics whenever there exist a measurable space $(\mathbb{S}, \mathscr{S})$ and 
two measurable functions $\overline{g} : \mathbb{S} \times \mathbb T \rightarrow [0,+\infty)$ and $\overline{h} : \ss_1^n \rightarrow [0,+\infty)$ such that 
$L_n(\theta; x_1, \dots, x_n) = \overline{g}(\mathfrak{t}_n(x_1, \dots, x_n); \theta) \overline{h}(x_1, \dots, x_n)$
holds for every $(x_1, \dots, x_n) \in \ss_1^n$. We also notice that, in the exchangeable case, any classical sufficient statistics $\mathfrak{t}_n$ turns out to be a symmetric function of 
$x_1, \dots, x_n$. 
A remarkable example is obtain when $\ss_1$ is endowed with some metric structure and the mapping $x \mapsto f(x\ |\ \theta)$ is continuous and positive for every $\theta \in \mathbb T$.
In fact, $(\mathbb{S}, \mathscr{S})$ can be chosen as the space of all probability densities on $(\ss_1, \mathscr X_1, \lambda_1)$, endowed with the topology of weak (narrow) convergence and ensuing 
Borel $\sigma$-algebra $\mathscr{S}$, 
and $\mathfrak{t}_n(x_1, \dots, x_n)$ as the \emph{empirical measure} $\frac 1n \sum_{i=1}^n \delta_{x_i}$. In this case, \eqref{posterior_exchangeable} can be rewritten by replacing the product
$\prod_{i=1}^n f(x_i | \theta)$ with $\overline{g}(\mathfrak{t}_n(x_1, \dots, x_n); \theta)$
for any $(x_1, \dots, x_n) \in \ss_1^n$ such that $\int_{\mathbb T} \overline{g}(\mathfrak{t}_n(x_1, \dots, x_n); \tau) \pi(\ud\tau) > 0$. 
This identity is crucial to notice that, in the case of $n$ exchangeable observations,
it seems more natural to investigate the Lipschitz-continuity of the posterior distribution with respect to the variable $\mathfrak{t}_n$, rather than the original vector $(x_1, \dots, x_n)$. Thus, a natural reformulation of \eqref{main_problem} becomes
\begin{equation} \label{main_problem_exchangeable}
\ud_{\mathcal P(\mathbb T)}\left(\pi_n(\ud\theta\ |\ x_1, \dots, x_n), \pi_n(\ud\theta\ |\ y_1, \dots, y_n) \right) \leq K\ \ud_{\mathbb{S}}\left(\mathfrak{t}_n(x_1, \dots, x_n), \mathfrak{t}_n(y_1, \dots, y_n)\right)
\end{equation}
with some suitable distance $\ud_{\mathbb{S}}$ on $\mathbb{S}$. This reformulation is in harmony with the original assumption of exchangeability, since the RHS of 
\eqref{main_problem_exchangeable} is invariant after a permutation of the data $(x_1, \dots, x_n)$ or $(y_1, \dots, y_n)$, unlike the (product) distance between $(x_1, \dots, x_n)$ and $(y_1, \dots, y_n)$,
which is not preserved by permutation. As already noted in \cite[Section 2.3]{CDR} and \cite{CDRNota}, these considerations provide a new geometrical perspective on the basic formulation of Bayesian inference.

To illustrate the last consideration, we restrict to the case in which the above density $f(\cdot | \theta)$ has the exponential form as in \eqref{fgphi} and \eqref{Phi_exponential}.   
Thus, under the same standard regularity conditions for $\Theta$ of Subsection \ref{sect:exponential}, 
we can take $\mathbb T$ equal to $\overline{\Theta}$. In this framework, we have at our disposal the classical sufficient statistics
$\mathfrak{t}_n(x_1, \dots, x_n) = \frac 1n \sum_{i=1}^n T(x_i)$
which is an element of the interior $\Lambda$ of the convex hull of the support of $h \circ T^{-1}$. Indeed, we recall that $\nabla M : \Theta \rightarrow \Lambda$ is a smooth diffeomorphism
and $\hat{\theta}_n := (\nabla M)^{-1}(\mathfrak{t}_n(x_1, \dots, x_n))$ coincides with the maximum likelihood estimator (MLE). Thus, we will study the Lipschitz-continuity of the posterior distribution of the random parameter with respect to $\mathfrak{t}_n$ which, due to the recalled relation with the MLE, establishes an interesting link between Bayesian and classical statistics.
 
\begin{pro}
Consider a statistical model from the exponential family \eqref{fgphi}, with $T : \ss_1 \rightarrow \rd$, $h : \ss_1 \to (0,+\infty)$, $\Theta$ and $M$ as in {\rm Section \ref{sect:exponential}}. 
Let $\pi(\ud\theta) = e^{-W(\theta)}\ud\theta$ with $W \in C^2(\Theta)$. If $\mathrm{Hess}[M] \geq \alpha I$ and $\mathrm{Hess}[W] \geq \lambda_{\ast} I$ on $\Theta$
in the sense of quadratic forms, for some $\alpha > 0$ and $\lambda_{\ast} \in \mathbb R$, and
$$
\int_{\Theta} |\theta|^2 \exp\{n[\mathfrak{t}_n(x_1, \dots, x_n) \cdot \theta - M(\theta)] - W(\theta)\}\ud \theta < +\infty
$$ 
for every $n \in \mathbb N$ and $(x_1, \dots, x_n) \in \ss_1^n$, then the posterior $\pi_n(\cdot|\cdot)$ satisfies \eqref{main_problem_exchangeable}
for every $n \geq \max\{1, -\lambda_{\ast}/\alpha\}$, with $\mathbb T = \overline{\Theta}$, 
$\ud_{\mathcal P(\mathbb T)} = \Wdue$, $\ud_{\mathbb S}$ equal to the Euclidean distance on $\rd$, 
and $K = \frac{n}{n\alpha + \lambda_{\ast}}$. In addition, if $\nabla M$ is Lipschitz-continuous with constant $\ell$, then
$$
\Wdue\left(\pi_n(\ud\theta\ |\ x_1, \dots, x_n), \pi_n(\ud\theta\ |\ y_1, \dots, y_n) \right) \leq \frac{n \ell}{n\alpha + \lambda_{\ast}} |\hat{\theta}_n(x_1, \dots, x_n) - \hat{\theta}_n(y_1, \dots, y_n)|\ .
$$
holds for every $n \geq \max\{1, -\lambda_{\ast}/\alpha\}$ and $(x_1, \dots, x_n), (y_1, \dots, y_n) \in \ss_1^n$.
\end{pro}
\begin{proof} We
just notice that the function $\Phi$ of Section \ref{sect:exponential} becomes $\Phi(\mathfrak{t}_n,\theta) = n[\mathfrak{t}_n - M(\theta)]$, and apply Proposition \ref{pro_exponential}. 
\end{proof}

\subsection{Approximation of posterior distributions by mixtures} \label{sect:renyi}

This subsection is referred to the setting of Subsection \ref{sect:formulation_main}-\ref{sect:literature}, with the further assumption that $(\ss, \ud_{\ss})$ is totally bounded. 
A joint p.m. $\gamma$ is given on $(\X\times\rd, \ssa\otimes\BoRd)$, with first marginal $\chi$. The probability kernel $\pi(\cdot|\cdot) : \BoRd \times \X \rightarrow [0,1]$ is thought
of as a distinguished solution of the disintegration problem, that is $\int_A \pi(B|x) \chi(\ud x) = \gamma(A\times B)$ for any $A \in \ssa$ and $B \in \BoRd$. 
For simplicity, we assume that the support of $\chi$ coincides with the whole
of $\ss$. Now, we briefly describe an approximation procedure due to Renyi \cite{renyi}. See also \cite{pfa,resa} and references therein.  
Fix $\epsilon > 0$ arbitrarily. By total boundedness, there is a finite partition of $\ss$, denoted by $\{A_1, \dots, A_{k(\epsilon)}\}$, satisfying
\begin{enumerate}
\item[i)] $A_i \cap A_j = \emptyset$, for every $i,j \in \{1, \dots, k(\epsilon)\}$ with $i\neq j$
\item[ii)] $\cup_{j = 1}^{k(\epsilon)} A_j = \ss$
\item[iii)] $\chi(A_j) > 0$ for every $j \in \{1, \dots, k(\epsilon)\}$
\item[iv)] $\chi(\partial A_j) = 0$ for every $j \in \{1, \dots, k(\epsilon)\}$
\item[v)] $\text{diam}(A_j) \leq \epsilon$.
\end{enumerate}
The number $k(\epsilon)$ is usually referred to as the $\epsilon$-covering number of $(\ss, \ud_{\ss})$, and it is related to the dimension of $\ss$. 
We consider the following approximation of $\pi(\cdot|\cdot)$, given by
$$
\pi_{\epsilon}(B|x) := \sum_{j=1}^{k(\epsilon)} \frac{\gamma(A_j\times B)}{\chi(A_j)} \mathds{1}_{A_j}(x)
$$
for any $B \in \BoRd$ and $x \in \ss$. 
Finally, we endow the space $\mathcal P(\rd)$ of all p.m.'s on $(\rd, \BoRd)$ with the Borel $\sigma$-algebra $\mathscr P(\rd)$ originated by the weak convergence of p.m.'s. 
We have the following
\begin{pro} \label{prop:RS}
Let $\epsilon > 0$ and $\{A_1, \dots, A_{k(\epsilon)}\}$ be given as above.
Let $\ud_{\mathcal P(\rd)}$ be any distance which is convex and $\mathscr P(\rd) \otimes \mathscr P(\rd)\setminus \mathscr{B}([0,+\infty))$-measurable. 
Let the kernel $\pi(\cdot|\cdot)$ satisfy \eqref{main_problem} with such distance $\ud_{\mathcal P(\rd)}$. Then, 
\begin{equation} \label{KR_continuity}
\ud_{\mathcal P(\rd)}\left( \pi(\cdot|x), \pi_{\epsilon}(\cdot|x) \right) \leq L\epsilon\ , \qquad \forall\ x \in \ss.
\end{equation}  
\end{pro}

\begin{proof}
Fix $x \in \ss$. Then, $x \in A_{j(x)}$ for some $j(x) \in \{1, \dots, k(\epsilon)\}$ and
$\
\pi_{\epsilon}(\cdot|x)=\frac{1}{\chi(A_{j(x)})} \int_{A_{j(x)}}  \pi(\cdot|y) \chi(\ud y). 
$
Since $\pi(\cdot|x) = \frac{1}{\chi(A_{j(x)})} \int_{A_{j(x)}}  \pi(\cdot|x) \chi(\ud y)$, exploit the convexity of $\ud_{\mathcal P(\rd)}$ to obtain
$$
\ud_{\mathcal P(\rd)}\left( \pi(\cdot|x), \pi_{\epsilon}(\cdot\ |\ x) \right) \leq \frac{1}{\chi(A_{j(x)})} \int_{A_{j(x)}} \ud_{\mathcal P(\rd)}\left( \pi(\cdot|x), \pi(\cdot|y) \right) \chi(\ud y)\ .
$$
Combination of this last inequality with \eqref{main_problem} leads immediately to \eqref{KR_continuity}.
\end{proof}

The above proposition can be used to tackle the following question, which occurs very frequently in Bayesian inference. See \cite{resa} and \cite{Lijoi}
for formalizations within the Bayesian nonparametric setting and the parametric setting obtained by the classical exponential family, respectively. 
Let $\nu(\cdot | \cdot) : \ssa \times \rd \rightarrow [0,1]$
be a probability kernel representing the statistical model, not necessarily dominated. Given some prior $\pi$ on $(\rd, \BoRd)$, suppose that the posterior is not computable in a closed form,
so that very little can be said beyond its existence. This phenomenon usually happens in a semi-parametric or nonparametric setting. In any case, $\pi$ can be well approximated by mixtures
of the form $\sum_{j=1}^N \lambda_j \pi_j$, where $\pi_1, \dots, \pi_N$ are prior measures on $(\rd, \BoRd)$, usually belonging to some distinguished class, and $\lambda_1, 
\dots, \lambda_N \in [0,1]$ with $\sum_{j=1}^N \lambda_j = 1$. Now, assume that the posterior $\pi_j(\cdot | \cdot) : \BoRd \times \ss \rightarrow [0,1]$, relative to the prior $\pi_j$,
is actually computable in a closed form. Thus, it can be shown that the posterior $\pi_{\ast}(\cdot | \cdot) : \BoRd \times \ss \rightarrow [0,1]$, relative to
the prior $\sum_{j=1}^N \lambda_j \pi_j$, is equal to
$$
\pi_{\ast}(\cdot | x) = \lambda_j(x) \pi_j(\cdot | x)
\qquad\mbox{with }\quad
 \lambda_j(x) := \frac{\lambda_j \int_{\rd} f(x | \tau) \pi_j(\ud\tau)}{\sum_{i=1}^N \lambda_i \int_{\rd} f(x | \tau) \pi_i(\ud\tau)}. 
$$
Following \cite{resa,Lijoi}, we observe that the above Proposition \ref{prop:RS} can be used to compute the degree of approximation of the true posterior $\pi(\cdot|\cdot)$ by $\pi_{\ast}(\cdot | \cdot)$, uniformly 
with respect to the observed value $x$. For instance, our Proposition \ref{prop:RS} improves on Proposition 2 of \cite{resa} by providing an explicit rate of convergence.


\subsection{Bayesian Consistency} \label{sect:consistency}

In the problem of consistency, we start by considering a sequence of exchangeable observations, say $\{X_i\}_{i \geq 1}$, whose probability distribution is given by the identity
$\pp[X_1 \in A_1, \dots, X_n \in A_n] = \int_{\mathbb T} \big[\prod_{i=1}^n \nu(A_i\ |\ \theta)\big] \pi(\ud\theta)$, as explained in Subsection \ref{sect:nobservations}. In this subsection, 
we confine ourselves to case of real-valued $X_i$'s, so that $A_1, \dots, A_n \in \mathscr{B}(\reals)$, with reference measure 
$\lambda_1 = \mathcal{L}^1$. Moreover, we let $\ps$ be an open subset of $\rd$, and $\pi$ a p.m. with support equal to $\overline{\Theta}$ with $\pi(\partial \Theta)= 0$. Hence, the above space
$\mathbb T$ coincides with $\overline{\Theta}$. 
Lastly, we suppose that, for all $\theta \in \ps$, $\nu(\cdot| \theta)$ is absolutely continuous with respect to $\lambda_1$ with density
$f(\cdot|\cdot) > 0$, and that the mapping $x \mapsto f(x|\theta)$ is continuous.
In this framework, the posterior distribution is given by the Bayes formula \eqref{posterior_exchangeable}, while the likelihood can be written as $\exp\{ n\int_{\R} \log f(y|\theta) \empiric^x(\ud y)\}$
where $x = (x_1, \dots, x_n) \in \reals^n$ and $\empiric^x(\cdot) := \frac 1n \sum_{i=1}^n \delta_{x_i}(\cdot)$ denotes the \emph{empirical measure}. 
In the theory of Bayesian consistency, one fixes $\theta_0 \in \ps$ and generates a sequence $\{\xi_i\}_{i \geq 1}$ of i.i.d. random variables
from the p.m. $\nu(\cdot|\theta_0)$ given by the density $f(\cdot|\theta_0)$.
The objective is to prove that the posterior piles up near the true value $\theta_0$, i.e. that $\pi_n(U_0^c|\xi_1, \dots, \xi_n) \to 0$ as $n \rightarrow \infty$
for every neighborhood $U_0 \in \mathscr{B}(\ps)$ of $\theta_0$, where convergence is intended in probability. 
See \cite{diafree1} and \cite[Chapter 4]{GDS} for foundational motivations.
Now, with the help of the theory developed in this paper, we are able to provide a \emph{posterior contraction rate} at $\theta_0$, i.e. a sequence 
$\{\epsilon_n\}_{n \in\mathbb N}$ of positive numbers for which
\begin{equation} \label{PCR}
\pi_n\left(\{\theta \in \ps\ :\ |\theta - \theta_0| \geq M_n\epsilon_n\}\ \big|\ \xi_1, \dots, \xi_n\right)  \stackrel{\pp}{\longrightarrow} 0, \quad\quad \text{as}\ n \rightarrow \infty,
\end{equation}
holds for every diverging sequence $\{M_n\}_{n \geq 1}$ of positive numbers, where $\stackrel{\pp}{\longrightarrow}$ denotes convergence in probability. 
Cfr. Definition 8.1 in \cite{GV}. Now, we further assume that both $\nu(\cdot|\theta_0)$ and $\nu_1(\cdot)$
belong to $\mathcal{P}_1(\R)$, where $\nu_1(A) := \int_A \int_{\ps} f(x|\theta) \ud x \pi(\ud\theta)$. Thus, we can put
\begin{equation} \label{epsn}
\epsilon_n = \ee\left[\Wuno\left(\pi_n(\ud\theta|\xi_1, \dots, \xi_n); \delta_{\theta_0}\right)\right] 
\end{equation}
and notice that this choice actually provides a posterior contraction rate at $\theta_0$, highlighting the relevant role played by the Wasserstein distance in this theory. In fact, an application of the Markov 
inequality yields
$$
\pi_n\left(\{\theta \in \ps\ :\ |\theta - \theta_0| \geq M_n\epsilon_n\}\ \big|\ \xi_1, \dots, \xi_n\right) \leq \frac{1}{M_n\epsilon_n}\Wuno\left(\pi_n(\ud\theta|\xi_1, \dots, \xi_n); \delta_{\theta_0}\right)
$$
and the conclusion displayed in \eqref{PCR} follows by taking expectation of both sides of the above inequality, after recalling the suitable choice of $\epsilon_n$ made in \eqref{epsn}, 
Now, for any distribution function $F$ on $\R$, we introduce the probability kernel
\[\begin{aligned}
\pi_n^*(\ud\theta|F) :&=  \frac{\exp\{ n\int_{\reals} \log f(y|\theta) \ud F(y)\}}{\int_{\ps} \exp\{ n\int_{\reals} \log f(y | t) \ud F(y)\} \pi(\ud t)} \pi(\ud\theta) \\&=\frac{\exp\{ n \int_0^1 \log f(F^{-1}(u) | \theta) \ud u\}}{\int_{\ps} \exp\{ n\int_0^1 \log f(F^{-1}(u) | t) \ud u\} \pi(\ud t)} \pi(\ud\theta)
\end{aligned}\]
where, in the first line, integrals on $\R$ are intended in Riemann-Stieltjes sense, while, in the second line, $F^{-1}(u) := \inf\{y \in \reals\ |\ F(y) \geq u\}$. In this notation, we have
$\pi_n(\ud\theta | x) = \pi_n^*(\ud\theta | \hat{F}_n^x)$, where $\hat{F}_n^x(y) := \frac 1n \sum_{i=1}^n \ind_{[x_i, +\infty)}(y)$ denotes the empirical distribution function.
Thanks to the triangle inequality for the Wasserstein distance, we can provide the following useful bound for the expression of $\epsilon_n$ given in \eqref{epsn}, namely
\begin{equation*} \label{boundconsistency1}
\epsilon_n \leq \Wuno\left(\pi_n^*(\ud\theta | F_0); \delta_{\theta_0}\right) + \ee\left[\Wuno\left(\pi_n^*(\ud\theta | F_0); \pi_n^*(\ud\theta | \hat{F}_n^{\xi})\right)\right]
\end{equation*}
where $F_0(y) := \int_{-\infty}^y f(x | \theta_0) \ud x$ and $\hat{F}_n^{\xi}(y) := \frac 1n \sum_{i=1}^n \ind_{[\xi_i, +\infty)}(y)$. Apropos of the former term on the above RHS, we notice that
$\Wuno\left(\pi_n^*(\ud\theta | F_0); \delta_{\theta_0}\right) = \int_{\Theta} |\theta -\theta_0| \pi_n^*(\ud\theta | F_0)$.
Then, combining the definitions of \emph{Kullback-Leibler divergence} 
$K(\theta | \theta_0) := \int_{\reals} \log\left(\frac{f(y | \theta_0)}{f(y | \theta)}\right) f(y | \theta_0) \ud y$
with that of $\pi_n^*(\ud\theta | F_0)$, we can write
$$
\Wuno\left(\pi_n^*(\ud\theta | F_0); \delta_{\theta_0}\right) = \frac{\int_{\Theta} |\theta -\theta_0| e^{-n K(\theta | \theta_0)} \pi(\ud\theta)}{\int_{\Theta} e^{-n K(\theta | \theta_0)} \pi(\ud\theta)}\ . 
$$  
Here, we confine ourselves to dealing with \emph{regular models} (Cfr. \cite[Chapter 18]{Ferguson}), meaning that the Fisher information matrix $\mathrm{I}[\theta_0]$ at $\theta_0$, given by
$$
\mathrm{I}[\theta_0] := \left(-\int_{\reals} \left[\frac{\partial^2}{\partial \theta_i \partial \theta_i} f(x | \theta)\right]_{\theta = \theta_0} f(x | \theta_0)\ud x\right)_{ij}
$$ 
is strictly positive definite. Thus, with the quadratic form notation as in \eqref{be}, we have that
$K(\theta\ |\ \theta_0) = \frac 12\ \langle \mathrm{I}[\theta_0] (\theta - \theta_0),(\theta - \theta_0)\rangle + o(|\theta - \theta_0|^2)$ as $\theta \rightarrow \theta_0$, and that 
$\inf\{K(\theta | \theta_0)\ |\ \theta \in \Theta,\ |\theta - \theta_0| \geq \epsilon\} > 0$ for all sufficiently small $\epsilon > 0$. 
Now, an application of Theorem 41 in \cite{breitung} shows that
$$
\int_{\Theta} e^{-n K(\theta| \theta_0)} \pi(\ud\theta) \sim \left(\frac{2\pi}{n}\right)^{d/2} \frac{1}{\sqrt{\mathrm{I}[\theta_0]}}\ ,
$$
while Theorem 43 of the same reference gives
$$
\int_{\Theta} |\theta -\theta_0| e^{-n K(\theta | \theta_0)} \pi(\ud\theta) \sim \left(\frac{2}{n}\right)^{(d+1)/2} \frac 12\ \Gamma\left(\frac{d+1}{2}\right) 
\frac{\int_{\mathbb S^{d-1}} (\langle \mathrm{I}[\theta_0]^{-1} z,z\rangle )^{1/2} \ud \sigma(z)}{\sqrt{\mathrm{I}[\theta_0]}}
$$
where $\mathbb S^{d-1}$ stands for the surface of the ball of radius equal to 1 and centered at the origin of $\rd$. In conclusion, for regular models, we get 
$\Wuno\left(\pi_n^*(\ud\theta | F_0); \delta_{\theta_0}\right) \sim \frac{1}{\sqrt{n}}$ as $n \rightarrow +\infty$. 
At this stage, if we were able to show that the mapping $F \mapsto \pi_n^*(\ud\theta\ |\ F)$ is Lipschitz-continuous, in the sense that
\begin{equation}  \label{Lipschitz_consistency}
\Wuno\left(\pi_n^*(\ud\theta\ |\ F_1); \pi_n^*(\ud\theta\ |\ F_2)\right) \leq L(f,\pi)\ \Wdue(\mu_1; \mu_2) 
\end{equation}
with $\mu_i((-\infty, y]) = F_i(y)$ for $i=1,2$, for some constant $L(f,\pi) \geq 0$ independent of $n$, then we would conclude that
$$
\ee\left[\Wuno\left(\pi_n^*(\ud\theta\ |\ F_0); \pi_n^*(\ud\theta\ |\ \hat{F}_n^{\xi})\right)\right] \leq L(f,\pi) \ee\left[ \Wdue(\empiric^{\xi}; \nu(\cdot\ |\ \theta_0))\right]\ ,
$$ 
establishing in this way a very interesting connection. In fact, the term $\ee\left[ \Wdue(\empiric^{\xi}; \nu(\cdot\ |\ \theta_0))\right]$ is well-known in the probabilistic literature as \emph{speed of mean Glivenko-Cantelli convergence}, or \emph{monopartite matching problem}. See, for example, \cite{BoLe, DR, fourn}. In particular, for one-dimensional distributions, if $\nu(\cdot | \theta_0) \in \mathcal{P}_2(\R)$ satisfies also
\begin{equation} \label{BoLe}
\int_{\R} \frac{\nu((-\infty, x] | \theta_0) \nu((x, +\infty) | \theta_0)}{f(x|\theta_0)} \ud x < +\infty
\end{equation}
we have
$
\ee\left[ \Wdue(\empiric^{\xi}; \nu(\cdot| \theta_0))\right] \sim \frac{1}{\sqrt{n}}
$
as $n \rightarrow +\infty$, which again represent the optimal rate. Cfr. \cite[Theorem 5.1]{BoLe}.

To prove \eqref{Lipschitz_consistency}, we bring the theory developed in Section 2 into the game. We start from a well-known identity by Dall'Aglio, according to which $$\Wdue(\mu_1, \mu_2) = \|F_1^{-1} - F_2^{-1}\|_{\mathrm L^2(0,1)} := \left(\int_0^1 
|F_1^{-1}(u) - F_2^{-1}(u)|^2 \ud u\right)^{1/2}.$$ Thanks to this fact, we can apply point {\bf (ii)} or {\bf (iii)} of Theorem \ref{th:0}---or, more precisely, their infinite-dimensional reformulations,
stated as point {\bf (ii)} or {\bf (iii)} of Theorem \ref{infinity} below, with $\V = \mathrm L^2(0,1)$, 
\begin{equation} \label{Xconsistency}
\X = \{H : (0,1) \rightarrow \R\ |\ H(u) = \inf\{y \in \R\ |\ \mu((-\infty, y]) \geq u\}\ \text{for\ some}\ \mu \in \mathcal{P}_2(\R)\} 
\end{equation}
and
$$
g_n(H, \theta) := \frac{\exp\{ n \int_0^1 \log f(H(u)\ |\ \theta) \ud u\}}{\int_{\ps} \exp\{ n\int_0^1 \log f(H(u)\ |\ t) \ud u\} \pi(\ud t)} = \frac{e^{n \Phi(H,\theta)}}{\int_{\ps} e^{n \Phi(H,t)} \pi(\ud t)},
\quad\quad H \in \ss,
$$
where $\Phi(H,\theta) := \int_0^1 \log f(H(u)\ |\ \theta) \ud u$. Indeed, we notice that $\pi_n^*(\ud\theta\ |\ F) = g_n(F^{-1}, \theta) \pi(\ud\theta)$ for any distribution function $F$ with $F^{-1} \in L^2(0,1)$. We show an explicit solution based on Theorem \ref{infinity}-{\bf (iii)}. The evaluation of the Fisher functional starts from the evaluation of the Gateaux derivative of the mapping 
$H \mapsto g_n(H, \theta)$, namely 
\begin{align*}
\nabla_H g_n(H, \theta) &= n \frac{\nabla_H \Phi(H,\theta) e^{n \Phi(H,\theta)} \left(\int_{\ps} e^{n \Phi(H,t)} \pi(\ud t) \right) - e^{n \Phi(H,\theta)} \left(\int_{\ps} \nabla_H \Phi(H,t) e^{n \Phi(H,t)} \pi(\ud t) \right) }{\left(\int_{\ps} e^{n \Phi(H,t)} \pi(\ud t) \right)^2} \\
&= n g_n(H, \theta) \left[\nabla_H \Phi(H,\theta) - \int_{\ps} \nabla_H \Phi(H,t) g_n(H, t) \pi(\ud t) \right]\ .
\end{align*}
This computation yields
$$
\mathcal{J}_{\pi}[g(H, \cdot)] = n \left(\int_{\ps} \left\|\nabla_H \Phi(H,\theta) - \int_{\ps} \nabla_H \Phi(H,t) g_n(\cdot, t) \pi(\ud t) \right\|^2_{\mathrm L^2(0,1)} g_n(H, \theta) \pi(\ud \theta)\right)^{1/2}\ .
$$
Moreover, we notice that $\langle \nabla_H \Phi(H,\theta), \Psi\rangle_{\mathrm L^2(0,1)} = \int_0^1 \frac{\partial_x f(H(u)\  |\ \theta)}{f(H(u)\ |\ \theta)} \Psi(u) \ud u$
which, by resorting once again to the Poincar\'e inequality, entails
\begin{align*}
& \left(\int_{\ps} \left\|\nabla_H \Phi(H,\theta) - \int_{\ps} \nabla_H \Phi(H,t) g_n(\cdot, t) \pi(\ud t) \right\|^2_{\mathrm L^2(0,1)} g_n(H, \theta) \pi(\ud \theta)\right)^{1/2} \\
&\qquad \leq \mathcal{C}[g_n(H, \theta) \pi(\ud \theta)] \left(\int_{\ps} \left\|\nabla_H \frac{\partial_x f(H(\cdot)\  |\ \theta)}{f(H(\cdot)\ |\ \theta)}\right\|^2_{\mathrm L^2(0,1)} g_n(H, \theta) \pi(\ud \theta)\right)^{1/2}\ .
\end{align*}
We assume that the following scaling estimate holds 
\begin{equation} \label{consistencyPoincare}
\mathcal{C}^2[g_n(H, \cdot) \pi(\cdot)] \leq \frac{\tilde{C}(H; f, \pi)}{n}
\end{equation}
where $\tilde{C}(H; f, \pi)$ is a constant independent of $n$. 
Finally, we define
$$
\mathcal{E}(H; f, \pi) := \left( \sup_{n \in \naturals}\int_{\ps} \left\|\nabla_H \frac{\partial_x f(H(\cdot)\  |\ \theta)}{f(H(\cdot)\ |\ \theta)}\right\|^2_{\mathrm L^2(0,1)} g_n(H, \theta) \pi(\ud \theta)\right)^{1/2}
$$
and $L(f;\pi) := \sup_{H \in L^2(0,1)} \tilde{C}(H; f, \pi) \mathcal{E}(H; f, \pi)$. We can now condense this line of reasoning in the following
\begin{theorem} \label{thm:consistency}
Suppose that:
\begin{itemize} 
\item[i)] $f(x | \theta) > 0$ for all $(x,\theta) \in \R \times \ps$ and $x \mapsto f(x\ |\ \theta) \in C^2(\R)$ for all $\theta \in \ps$; 
\item[ii)] $\nu_1(\cdot) \in \mathcal{P}_2(\R)$, where $\nu_1(A) := \displaystyle\int_A \int_{\ps} f(x|\theta) \ud x \pi(\ud\theta)$;
\item[iii)] for fixed $\theta_0 \in \ps$,  $\{f(\cdot | \theta)\}_{\theta \in \ps}$ defines a $C^2$-regular model at $\theta_0$, as stated, e.g., in \cite[Chapter 18]{Ferguson};
\item[iv)] $\nu(\cdot | \theta_0) \in \mathcal{P}_2(\R)$ satisfies \eqref{BoLe};
\item[v)]  $\nabla_H \dfrac{\partial_x f(H(\cdot)  | \theta_0)}{f(H(\cdot) | \theta_0)}\;\in L^2(0,1)$ for any $H \in \X$, where $\X$ is defined by \eqref{Xconsistency};
\item[vi)] there exists $\tilde C(H;f,\pi)$ such that Poincar\'e constant of the posterior satisfies the bound \eqref{consistencyPoincare} for any $n\in\N$;
\item[vii)] $L(f;\pi) < +\infty$. 
\end{itemize} 
Then the posterior is consistent at $\theta_0$, with the optimal posterior rate $1/\sqrt{n}$. 
\end{theorem}
The validity of the estimate \eqref{consistencyPoincare}, which is here an assumption, is natural under suitable conditions like the ones in Proposition \ref{francesi} in the Appendix. The above assumptions 
vi)-vii) are therefore a rephrasing of the assumption in Theorem \ref{infinity}-{\bf (iii)},
which is stated later in Section \ref{sect:otherNew} (as a generalization of Theorem \ref{th:0}-{\bf (iii)}) 
and  can be invoked for proving Theorem \ref{thm:consistency}. 
For extensions and sharpening of this approach to Bayesian consistency and of Theorem \ref{thm:consistency}, including rigorous proofs, we refer to the recent contribution \cite{DFM}, where we  also show novel  applications.

\section{Some extensions and other new results} \label{sect:otherNew}


\subsection{Wasserstein distance: the PDE approach} \label{sect:dynamic}

Here, we briefly describe the techniques we shall exploit when considering the $2$-Wasserstein distance, in order to establish \eqref{main_problem} under  Assumptions \ref{ass:1}
for a probability kernel of the form \eqref{kernelgpi} and such that $\pi(\cdot|x)$ has finite second moment for any $x \in \X$. Indeed, it will be convenient to take advantage of the following dynamical formulation and to resort to the ensuing PDE approach.  

Letting $C^\infty_c(\overline \ps)$ denote the space of restrictions to $\overline\ps$ of $C^\infty_c(\rd)$ functions,
the dynamic formulation of the $2$-Wasserstein distance is based on the \emph{continuity equation}
\begin{equation}\label{basiccontinuity}
\frac{\ud}{\ud t}\int_{\overline\ps}\psi(\theta)\,\mu_t(\ud\theta)=\int_{\overline\ps}\left\langle\nabla \psi(\theta), \mathbf w_t(\theta) \right\rangle\,\mu_t(\ud\theta)\qquad\forall\psi\in C^\infty_c(\overline\ps),
\end{equation}
where $[0,1]\ni t\mapsto \mu_t\in\mathcal P_2(\overline\ps)$ is a narrowly continuous curve and $\overline\ps\ni\theta \mapsto \mathbf w_t(\theta)\in$ is a time-dependent velocity vector field. The \emph{Benamou-Brenier formula} \cite{BB} asserts that the Wasserstein distance between $\mu_0$ and $\mu_1$ can be computed as
\[
\Wdue(\mu_0,\mu_1)=\inf \int_0^1\left(\int_{\overline\Theta}|\mathbf w_t(\theta)|^2\,\mu_t(\ud\theta)\right)^{\frac12}\, \ud t,
\]
where the infimum is taken among all narrowly continuous curves from $\mu_0$ to $\mu_1$ in $\mathcal P_2(\overline\ps)$ and all Borel functions $[0,1]\times\overline\ps\ni (t,\theta)\mapsto\mathbf w_t(\theta)\in\rd$ such that $\mathbf w_t\in L^2_{\mu_t}(\overline\Theta;\rd)$ for a.e. $t\in(0,1)$ and such that \eqref{basiccontinuity} holds. 

By looking at the map $x\mapsto \pi(\cdot | x)\in\mathcal P_2(\overline\ps)$ associated to a probability kernel in the form \eqref{kernelgpi},
let us  fix two points $x_1, x_2\in\X$. We notice that a continuous curve $[0,1]\ni t\mapsto \alpha_{x_1,x_2}(t)\in\X$ such that $\alpha_{x_1,x_2}(0)=x_1$, $\alpha_{x_1,x_2}(1)=x_2$,  
naturally induces a curve on $\mathcal P_2(\overline\ps)$ defined by 
\begin{equation*}\label{CURVE}
[0,1]\ni t\mapsto \pi(\cdot|\alpha_{x_1,x_2}(t))\in\mathcal P_2(\overline\ps).
\end{equation*}
We use this curve for bounding the Wasserstein distance, as the computation of associated velocity vector fields $\mathbf{w}_t^{x_1,x_2}$ yields a direct estimate by means of the Benamou-Brenier formula. Indeed, if the vector field $\mathbf{w}_t^{x_1,x_2}\in L^2_{\pi(\cdot | \alpha_{x_1,x_2}(t))}(\overline\ps)$ satisfies the continuity equation in coupling with the curve 
$\pi(\cdot|\alpha_{x_1,x_2}(t))$,
for every fixed $x_1,x_2\in\X$, then the Benamou-Brenier formula entails
\begin{equation*}\label{wxyt}
\mathcal W_2(\pi(\cdot | x_1),\pi(\cdot | x_2))\le \int_0^1\left(\int_{\overline\ps} |\mathbf{w}_t^{x_1,x_2}(t,\theta)|^2\,\pi(\ud\theta | \alpha_{x_1,x_2}(t))\right)^{\frac12}\, \ud t.
\end{equation*}
Therefore, if we can further prove that $K\ge 0$ exist such that
\begin{equation}\label{reach}
\int_0^1\left(\int_{\overline\ps} |\mathbf{w}_t^{x_1,x_2}(t,\theta)|^2\,\pi(\ud\theta | \alpha_{x_1,x_2}(t))\right)^{\frac12}\,\ud t\le K|x_1-x_2|,
\end{equation}
then we obtain \eqref{main_problem} with the $\Wdue$ distance and $L = K$. 
In this regard, if $\alpha_{x_1,x_2}(t)$ is chosen to be a line segment, the velocity vector field scales as $|\alpha_{x_1,x_2}'(t)|=|x_1-x_2|$. 
Henceforth, we restrict indeed to the case of the line segment (which is related to the choice of $\X$ as a convex set), that is, we let 
\begin{equation}\label{esse}
\alpha_{x_1,x_2}(t)=\mathbf s_{x_1,x_2}(t):=(1-t)x_1+tx_2,\quad x_1,x_2\in\mathbb{X},\quad t\in[0,1].
\end{equation}

In order to obtain an estimate like \eqref{reach}, taking account of the time-scaling induced by the choice \eqref{esse}, we analyze the dual norm
\begin{equation}\label{universal}
\sup\left\{\,\partial_\nu\int_{\overline\ps}\psi(\theta)\,\pi(\ud\theta|x):\psi\in C^\infty_c(\overline\ps),\;\int_{\overline\ps}|\nabla\psi(\theta)|^2\,\pi(\ud\theta|x)\le1\right\},
\end{equation}
where $x\in\mathbb X$, $\nu$ is a unit vector in $\mathbb R^m$ and $\partial_\nu$ denotes the associated directional derivative.
We note that \eqref{universal} is the dual expression of the $L^2_{\pi(\cdot|x)}(\overline\ps)$ norm of the solution $\mathbf w_x^\nu$ to 
\begin{equation}\label{directionalcontinuity}
\partial_\nu\int_{\overline\ps} \psi(\theta)\,\pi(\ud\theta|x)=\int_{\overline\Theta}\left\langle\mathbf w_x^\nu(\theta),\nabla\psi(\theta)\right\rangle\,\pi(\ud\theta|x)\, \qquad\forall\psi\in C^\infty_c(\overline\Theta).
\end{equation}
For $x=\mathbf s_{x_1,x_2}(t)$ and $\nu=\tfrac{x_2-x_1}{|x_2-x_1|}$, we get indeed $\mathbf w_t^{x_1,x_2}=|x_2-x_1|\, \mathbf w_x^\nu$. Therefore, 
a crucial step towards the desired estimate \eqref{reach} will be an estimate for the norm \eqref{universal}.
Indeed, if $\|\mathbf w_x^\nu\|_{L^2_{\pi(\cdot | x)}(\Theta)}\le K$ for some constant $K$ that is independent of $x$ and $\nu$, then \eqref{reach} holds. 

It is natural to look for a solution to \eqref{directionalcontinuity} in the form of a gradient vector field $\mathbf w_x^\nu=\nabla u_x^\nu$, thus providing optimality of the $L^2_{\pi(\cdot | x)}$ 
norm (as we detail in Section \ref{theorysection}). Therefore, by recalling the general form \eqref{kernelgpi} of the probability kernel,
we formally interpret equation \eqref{directionalcontinuity} as a family of degenerate elliptic problems (where we write $g_x(\cdot)=g(x,\cdot)$, hinting at the fact that here $x\in\mathbb X$
plays the role of parameter)
\begin{equation}\label{abstractelliptic}\left\{\begin{array}{ll}
-\mathrm{div}\left(g_x\pi\,\nabla u_x^\nu\right)=\partial_\nu g_x\,\pi\quad &\mbox{in}\ \ps\\
g_x \nabla u_x \cdot \mathbf n=0\quad&\mbox{on}\ \partial\Theta,
\end{array}\right.
\end{equation}
where $\mathbf n$ denotes the normal to the boundary.

Existence, regularity and estimation of weak solutions to degenerate elliptic equations (see \cite{FJK, FKS}) are related to the validity of a weighted Poincar\'e inequality such as \eqref{wirtinger}, the weights being given here by the p.m.'s $\pi(\cdot|x)$ as $x$ varies in $\X$. 
In view of the above discussion, the result in Theorem \ref{th:0}-{\bf (iii)} 
 has a clear PDE interpretation:
$g\in L^1_\pi(\Theta; W^{1,1}_{loc}(\mathbb X))$ is a regularity assumption that allows to take the $\partial_\nu$-derivative under the integral sign in \eqref{directionalcontinuity}, 
while the condition involving both the Poincar\'e constant and the Fisher functional appears as an estimate of the solution to \eqref{abstractelliptic}.
A similar interpretation holds for Theorem \ref{th:0}-{\bf (iv)}.

\subsection{Estimates of $\Wdue$ on moving domains} \label{sect:mainresults_2}

Also in this subsection, we keep the mathematical setting of Assumptions \ref{ass:1} and we confine ourselves to treating 
kernels of the form \eqref{kernelgpi}. We provide two other results, in which we get rid of the positivity restriction on $g$ appearing in Theorem \ref{th:0}-{\bf (iii)} and of the  Sobolev assumption on 
$g$ in the $x$ variable. This task requires the introduction of some new notation, along with the assumption that $\pi$ admits a density $q$ with respect to the Lebesgue measure $\mathcal L^d$. 
Thus, without loss of generality, we fix  $\pi=\mathcal L^d\mr\Theta$ in \eqref{kernelgpi}, throughout this subsection.  
For $\mathcal L^m$-a.e. $x\in\X$, we assume that $\Theta_x:=\{g(x,\cdot)>0\}$ is, up to a $\mathcal L^d$-null set, an open connected subset of $\ps$ with locally Lipschitz boundary.
Moreover, for any direction $\nu\in\mathbb S^{m-1}$, we consider the following Neumann boundary value problem 
\begin{equation}\label{neumanngintro}\left\{\begin{array}{ll}
-\mathrm{div}(g_x\nabla u_x^{\nu})=\partial_\nu \tilde g_x\quad &\mbox{ in }\Theta_x\\
g_x\nabla u_x^{\nu}\cdot \mathbf{n}_x=g_x\mathbf V_x^{\nu}\cdot\mathbf n_x\quad &\mbox{ on }\partial\Theta_x
\end{array}\right.
\end{equation}
where $g_x$ is a shorthand for $g(x,\cdot)$ and $\mathbf n_x$ denotes the exterior unit normal to $\partial\Theta_x$. This problem represents of course a generalization of \eqref{abstractelliptic}. 
The map $(\X,\ps)\ni (x,\theta)\mapsto \tilde g(x,\theta)=\tilde g_x(\theta)$ is a Sobolev map extending $g(x,\cdot)$ to the whole of $\ps$, 
while $\partial_\nu$ denotes the derivative in the direction $\nu\in\mathbb S^{m-1}$. More precisely, we assume that  
there exists $\tilde g\in L^1_{loc}(\X\times\ps)$ such that $\tilde g\in W^{1,1}(\tilde{\X}\times\ps)$ for any open set $\tilde{\X}$ compactly contained in $\X$ and such that $g(x,\theta)=  \tilde g(x,\theta)\,
\mathds{1}_{\Theta_x}(\theta)$ for $(\mathcal L^m\otimes\mathcal L^d)$-a.e. $(x,\theta)\in\X\times\ps$.
We note this extension guarantees that the right hand side in the first equation of \eqref{neumanngintro} belongs to $L^1(\ps_x)$ for $\mathcal L^m$-a.e. $x\in\X$.
Moreover, $\mathbf V_x^{\nu}:\Theta_x\to\rd$ is the vector field representing the velocity of $\ps_x$ in $\ps$, when the parameter $x$ varies along the $\nu$ direction. Here, $\Theta_x$ is assumed to be the image 
of a reference connected open set with locally Lipschitz boundary, say $\Theta_*\subset\rd$, through $\Phi_x$, where $\{\Phi_x\}_{x\in \X}$ is a smooth family of diffeomorphisms.
In such a case, we say that the positivity set of $g(x,\cdot)$ varies according to a {\it $\X$-regular motion}. The detailed notion of $\X$-regular motion will be given in Definition \ref{xmotion}, in Section \ref{proofs}.
Then, we put $\mathbf V_x^\nu=\partial_\nu\Phi_x\circ\Phi_x^{-1}$ and we introduce the notation $\mathbf V_x$ for the matrix $\nabla_x \Phi_x\circ\Phi^{-1}_x$. 
If $g\in L^1_{loc}(\mathbb X\times\Theta)$ satisfies all the above conditions, we say that $g$ admits a {\it regular extension}. Again, a detailed notion of regular extension will be given in Definition \ref{densityxy}, in Section \ref{proofs}.

The way is now paved for the formulation of a first abstract result, where we refer to weak solutions to problem \eqref{neumanngintro}. For clarity, a weak solution $u_x^\nu$ is defined in the usual way, through integration by parts, as an element of the weighted Sobolev space $H^1(\Theta_x,g_x)$. See Definition \ref{soluzionedebole} below.
\begin{theorem}\label{concretemixed} 
Let $\pi(\cdot|\cdot)$ be a kernel in the form \eqref{kernelgpi}, with $\pi=\mathcal L^d\mr\Theta$ and with $g\in L^1_{loc}(\mathbb X\times\Theta)$ admitting a regular extension and satisfying $\int_\Theta |\theta|^2g(x,\theta)\,d\theta<+\infty$ for $\mathcal L^m$-a.e. $x\in\mathbb X$. 
For any $\nu\in\mathbb S^{m-1}$, suppose there exists a weak solution $u_x^\nu\in H^1(\ps_x,g_x)$ to 
the problem \eqref{neumanngintro}, for $\mathcal L^{m-1}$-a.e. $x\in\X$, and that 
\begin{equation}\label{nuovoess}
K:=\sup_{\nu\in\mathbb S^{m-1}} \esssup_{x\in\X}\left(\int_{\Theta_x}|\nabla u_x^{\nu}(\theta)|^2\,g(x,\theta)\,\ud\theta\right)^{1/2}<+\infty.
\end{equation} 
Then, there exists a $\Wdue$-Lipschitz version of $\pi(\cdot|\cdot)$ satisfying \eqref{main_problem} with $L=K$.
\end{theorem}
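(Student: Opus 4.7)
The plan is to apply the dynamic approach of Subsection \ref{sect:dynamic}, now in a moving-domain setting. Fix $x_1, x_2 \in \X$ with $x_1 \neq x_2$, set $h := x_2 - x_1$ and $\nu := h/|h| \in \mathbb{S}^{m-1}$, and introduce the segment $x(t) = \mathbf{s}_{x_1,x_2}(t)$ together with the induced curve $[0,1] \ni t \mapsto \mu_t := \pi(\cdot | x(t))$ in $\mathcal{P}_2(\overline\Theta)$. By the Benamou-Brenier formula, it suffices to exhibit a Borel vector field $(t,\theta)\mapsto\mathbf w_t(\theta)$, with $\mathbf w_t \in L^2_{\mu_t}(\overline\Theta;\R^d)$, satisfying the continuity equation \eqref{basiccontinuity} coupled with $\{\mu_t\}$ and such that $\int_0^1 \|\mathbf w_t\|_{L^2_{\mu_t}}\,\ud t \leq K|h|$. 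The natural candidate, prescribed by the gradient-field discussion of Subsection \ref{sect:dynamic}, is $\mathbf w_t := |h|\,\nabla u_{x(t)}^\nu\,\indic_{\Theta_{x(t)}}$; the assumption \eqref{nuovoess} then yields $\|\mathbf w_t\|_{L^2_{\mu_t}} \leq K|h|$ for a.e.\ $t$, and integration over $[0,1]$ delivers the Benamou-Brenier bound.

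\medskip

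\noindent The crux is to check that this $\mathbf w_t$ is indeed a velocity for $\{\mu_t\}$. To this end I would first establish, for $\mathcal L^m$-a.e.\ $x\in\X$ and every $\psi\in C^\infty_c(\overline\Theta)$, the moving-domain analogue of \eqref{directionalcontinuity}, namely
\begin{equation}\label{planEQ}
\partial_\nu \int_{\overline\Theta} \psi(\theta)\,\pi(\ud\theta|x) = \int_{\Theta_x} g(x,\theta)\,\nabla u_x^\nu(\theta) \cdot \nabla\psi(\theta)\,\ud\theta.
\end{equation}
The idea is to pull the left integral back to the reference domain $\Theta_*$ via $\Phi_x$, differentiate under the integral sign (legitimate by the $W^{1,1}$ regularity of $\tilde g$ in $x$ and the smoothness of the $\X$-regular motion), and revert the change of variables; a rearrangement via the divergence theorem produces the Reynolds transport identity
\[
\partial_\nu \int_{\Theta_x} \psi\,\tilde g_x\,\ud\theta = \int_{\Theta_x} \psi\,\partial_\nu\tilde g_x\,\ud\theta + \int_{\partial\Theta_x} \psi\, g_x\, \mathbf V_x^\nu \cdot \mathbf n_x\,\ud S.
\]
The right-hand side is exactly the expression produced by testing the weak formulation of \eqref{neumanngintro} against $\psi$, which equals $\int_{\Theta_x} g_x \nabla u_x^\nu \cdot \nabla\psi\,\ud\theta$; this proves \eqref{planEQ}. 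Composition with $t\mapsto x(t)$ and the chain rule then yield the continuity equation \eqref{basiccontinuity} for $\{\mu_t\}$ with the proposed $\mathbf w_t$, the narrow continuity of the curve being granted by dominated convergence via the local integrability of $\tilde g$.

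\medskip

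\noindent A Lipschitz version of $\pi(\cdot|\cdot)$ is finally produced by a density argument: Fubini along line segments ensures that $\Wdue(\pi(\cdot|x_1),\pi(\cdot|x_2)) \leq K|x_1 - x_2|$ holds for $\mathcal L^{2m}$-a.e.\ pair $(x_1,x_2)$, and completeness of $\mathcal P_2(\overline\Theta)$ under $\Wdue$ permits to uniquely extend the restriction of $\pi(\cdot|\cdot)$ to the full-measure set to a $K$-Lipschitz map on all of $\X$, which, being continuous into $\mathcal P_2(\overline\Theta)$, automatically defines a Borel probability kernel. The main obstacle I anticipate is the rigorous derivation of the Reynolds-type identity from only Sobolev regularity of $\tilde g$: classical Reynolds transport requires smoothness, so one must approximate $\tilde g$ in $W^{1,1}$ by smooth functions, control the boundary integral via the trace operator on the locally Lipschitz $\partial\Theta_x$, and pass to the limit exploiting the $\X$-regularity of the motion. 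A secondary technicality is to ensure Borel measurability of the selection $x\mapsto u_x^\nu$ (and hence of $\mathbf w_t$), which should follow from uniqueness of the associated strictly convex variational problem on $H^1(\Theta_x, g_x)$.
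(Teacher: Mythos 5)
Your overall strategy coincides with the paper's: the bound is obtained by running the Benamou--Brenier estimate along line segments, with the gradient of the weak solution $u^\nu_x$ of \eqref{neumanngintro} serving as velocity field, and the continuity equation is derived from a Reynolds transport identity for the regular extension $\tilde g$. The paper proves exactly this identity for $W^{1,1}$ extensions and regular motions in Lemma \ref{reynolds} (by pulling back to the reference domain $\Theta_*$ and using the ACL property, rather than by smooth approximation), and packages the Benamou--Brenier step in Theorems \ref{abs} and \ref{abstractmixed}; the joint measurability of the velocity field that you flag is handled there via Lemma \ref{technical}.

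The genuine gap is in your final extension step. What the segment-wise argument actually yields is: for each direction $\nu$ and $\mathcal L^{m-1}$-a.e. $\xi\in\X_\nu$, the bound $\Wdue(\pi(\cdot|x_1),\pi(\cdot|x_2))\le K|x_1-x_2|$ holds for all pairs on the line $I_{\xi,\nu}$; equivalently, it holds for $\mathcal L^{2m}$-a.e. pair $(x_1,x_2)$. From this you cannot directly invoke a unique Lipschitz extension from a dense set: that device requires the inequality to hold for \emph{all} pairs of points of a single full-measure set $\X\setminus\Z$, and a null set of bad pairs in $\X\times\X$ need not be contained in $(\Z\times\X)\cup(\X\times\Z)$ for any null set $\Z$. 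Two repairs are available. One is a triangle-inequality argument through intermediate good points, which is what Lemma \ref{lm:lipschitzextension} together with the set $\mathbb B(\Z)$ formalizes in the infinite-dimensional section. The other, which is the paper's route here, is to first show $|\nabla_x G_\psi(x)|\le K\,\mathrm{Lip}(\psi)$ for the averages $G_\psi(x)=\int_\Theta\psi(\theta)\,g(x,\theta)\,\ud\theta$, deduce from the argument of Theorem \ref{th:0}-{\bf (ii)} an \emph{everywhere defined} $\Wuno$-Lipschitz version $\pi^*(\cdot|\cdot)$, check that $\pi^*$ agrees with the given kernel on almost every line, and then transfer the $\Wdue$ bound from a.e. pair to every pair by approximating a generic pair along its own direction and using the narrow lower semicontinuity of $\Wdue$ (as at the end of the proof of Theorem \ref{th:0}-{\bf (iii)}). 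Without one of these two devices the passage from the almost-everywhere statement to the claimed Lipschitz version is unjustified.
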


The next theorem will provide an estimate of the solution to problem \ref{neumanngintro}. For a kernel $\pi(\cdot|\cdot)$ in the form \eqref{kernelgpi} with $\pi=\mathcal L^d\mr\Theta$, 
we will use the shorthand $\mathcal C[g(x,\cdot)]$ to denote the Poincar\'e constant of the p.m. $g(x,\cdot)\,\mathcal L^d\mr\Theta$ on $(\ps,\psf)$. Denoting as usual by $\nabla_x$ the gradient in the $x$-variable and by 
$\nabla$ the gradient in the $\theta$-variable, we introduce the {\it Fisher functionals} associated to (the regular extension of) $g$ as 
\begin{equation}\label{fisherfunctional}
\mathcal{J}_1[\tilde g(x,\cdot)]:=\displaystyle\left(\int_{\Theta_x} \frac{\left|\nabla_x \tilde g(x,\theta)\right|^2}{\tilde g(x,\theta)}\, \ud\theta\right)^{\frac12},\qquad
\mathcal{J}_2[\tilde g(x,\cdot)]:=\displaystyle\left(\int_{\Theta_x} \frac{\left|\nabla \tilde g(x,\theta)\right|^2}{\tilde g(x,\theta)}\, \ud\theta\right)^{\frac12}\ .
\end{equation}

\begin{theorem}\label{maintrace} 
Let $\pi(\cdot|\cdot)$ be a kernel in the form \eqref{kernelgpi}, with $\pi=\mathcal L^d\mr\Theta$ and with $g\in L^1_{loc}(\mathbb X\times\Theta)$ admitting a regular extension $\tilde g$ and satisfying $\int_\Theta |\theta|^2g(x,\theta)\,\ud\theta<+\infty$ for $\mathcal L^m$-a.e. $x\in\mathbb X$. 
 If
\begin{equation*}\label{longassumption}\begin{aligned}
K:=\esssup_{x\in \X} &\Big{\{}\|\mathbf V_x(\cdot)\|_{W^{1,\infty}(\Theta_x)}\,\Big{(}1+\mathcal C[g(x,\cdot)]\left(1+\mathcal J_2[\tilde g(x,\cdot)]\right)\Big{)}
+\mathcal C[g(x,\cdot)]\mathcal J_1[\tilde g(x,\cdot)] \Big{\}}<+\infty
\end{aligned}\end{equation*}
is valid, then there exists a  $\Wdue$-Lipschitz version of $\pi(\cdot|\cdot)$ satisfying \eqref{main_problem} with $L=K$.
\end{theorem}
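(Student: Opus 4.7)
The plan is to reduce the statement to Theorem \ref{concretemixed} by showing that the quantity $K$ in the hypothesis controls the weighted gradient norm
$$M(x,\nu) \;:=\; \left(\int_{\Theta_x} |\nabla u_x^{\nu}(\theta)|^2\,g(x,\theta)\,\ud\theta\right)^{1/2}$$
of a weak solution $u_x^\nu \in H^1(\Theta_x, g_x)$ to the Neumann problem \eqref{neumanngintro}, uniformly in the unit direction $\nu \in \mathbb{S}^{m-1}$ and for $\mathcal{L}^m$-a.e.\ $x \in \X$. Existence of such weak solutions will be established by a standard Lax--Milgram argument in the quotient space $H^1(\Theta_x, g_x)/\mathbb{R}$: coercivity of the Dirichlet form $(u,v)\mapsto\int_{\Theta_x} g_x \nabla u \cdot \nabla v\,\ud\theta$ rests on the weighted Poincar\'e inequality with constant $\mathcal{C}[g(x,\cdot)]$, while continuity of the linear functional on the right-hand side follows from the $W^{1,1}$-regularity of $\tilde g$ and the $W^{1,\infty}$-regularity of $\mathbf{V}_x$. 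The compatibility condition for solvability is automatic: differentiating the identity $\int_{\Theta_x} \tilde g_x\,\ud\theta = 1$ in the direction $\nu$ via the Reynolds transport theorem yields $\int_{\Theta_x}\partial_\nu \tilde g_x\,\ud\theta + \int_{\partial\Theta_x} g_x \mathbf{V}_x^\nu\cdot\mathbf{n}_x\,\ud\sigma = 0$.

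The main estimate is obtained by testing the weak formulation of \eqref{neumanngintro} with the choice $\psi = u_x^\nu - a$, where $a\in\mathbb{R}$ is selected to saturate the weighted Poincar\'e inequality. First, the boundary term $\int_{\partial\Theta_x}g_x\mathbf{V}_x^\nu\cdot\mathbf{n}_x\,\psi\,\ud\sigma$ is converted by the divergence theorem into $\int_{\Theta_x}\psi\,\mathrm{div}(g_x\mathbf{V}_x^\nu)\,\ud\theta + \int_{\Theta_x} g_x\mathbf{V}_x^\nu\cdot\nabla\psi\,\ud\theta$, and the first of the two is further decomposed using $\mathrm{div}(g_x\mathbf{V}_x^\nu) = \nabla g_x\cdot\mathbf{V}_x^\nu + g_x\,\mathrm{div}\,\mathbf{V}_x^\nu$. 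The identity so obtained represents $M(x,\nu)^2$ as a sum of four pairings, each of which is controlled by a Cauchy--Schwarz step: the pairing with $\partial_\nu \tilde g_x$ gives a factor $\mathcal{J}_1[\tilde g(x,\cdot)]\,\mathcal{C}[g(x,\cdot)]\,M(x,\nu)$; the pairing with $\nabla g_x \cdot \mathbf{V}_x^\nu$ gives $\|\mathbf{V}_x\|_{L^\infty}\,\mathcal{J}_2[\tilde g(x,\cdot)]\,\mathcal{C}[g(x,\cdot)]\,M(x,\nu)$; the zero-order term with $g_x\,\mathrm{div}\,\mathbf{V}_x^\nu$ gives $\|\mathbf{V}_x\|_{W^{1,\infty}}\,\mathcal{C}[g(x,\cdot)]\,M(x,\nu)$ after Cauchy--Schwarz against the probability measure $g_x\,\ud\theta$; and the remaining flux term gives $\|\mathbf{V}_x\|_{L^\infty}\,M(x,\nu)$. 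Dividing the resulting quadratic inequality by $M(x,\nu)$ and taking the essential supremum in $x$ yields exactly $M(x,\nu)\le K$, and then Theorem \ref{concretemixed} delivers the conclusion.

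The main technical obstacle will be the rigorous justification of the weak-solution framework on the varying domain $\Theta_x$ with the degenerate weight $g_x$: one must verify that traces of elements of $H^1(\Theta_x, g_x)$ make sense on $\partial\Theta_x$, that the divergence theorem applied to $g_x\mathbf{V}_x^\nu\psi$ is valid, and that the quantity $\|\mathbf{V}_x\|_{W^{1,\infty}(\Theta_x)}$ correctly captures all the moving-boundary contributions. These points will be handled by exploiting the locally Lipschitz boundary of $\Theta_x$ guaranteed by the $\X$-regular motion hypothesis and the $W^{1,\infty}$-regularity of the diffeomorphisms $\Phi_x$, combined with the $W^{1,1}$-extension $\tilde g$, which together make both the Reynolds transport identity and the weighted integration-by-parts formula rigorous.
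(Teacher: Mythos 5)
Your proposal is correct, and the constant you obtain matches $K$ exactly: the four pairings you isolate (with $\partial_\nu\tilde g_x$, with $\nabla g_x\cdot\mathbf V_x^\nu$, with $g_x\,\mathrm{div}\,\mathbf V_x^\nu$, and the flux term $\int g_x\mathbf V_x^\nu\cdot\nabla\psi$) are precisely the terms the paper estimates in \eqref{termine1}--\eqref{termine2}, with the same Cauchy--Schwarz/Poincar\'e bookkeeping. The route, however, is genuinely different. The paper never constructs $u_x^\nu$: it applies the Reynolds transport formula to $r\mapsto\int_{\Theta_{\mathbf s(r)}}\psi\,\tilde g(\mathbf s(r),\cdot)$ for smooth test functions $\psi$, bounds the resulting derivative by $K\,\|\nabla\psi\|_{L^2_{g}}\,|x_1-x_2|$, and feeds this dual-norm estimate directly into the abstract Benamou--Brenier statement (Theorem \ref{abs}), so existence of a weak solution to \eqref{neumanngintro} is never needed. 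You instead prove existence by Riesz/Lax--Milgram and then invoke Theorem \ref{concretemixed}; this buys a cleaner modular structure (a genuine a priori estimate for the Neumann problem, which is how the paper \emph{announces} the theorem), at the cost of the trace and integration-by-parts issues you correctly flag. One simplification worth noting: you do not actually need to test with $\psi=u_x^\nu-a$, nor do you need traces of elements of $H^1(\Theta_x,g_x)$. Since the bilinear form is the inner product of $H^1(\Theta_x,g_x)$, the Riesz representative automatically satisfies $\|\nabla u_x^\nu\|_{L^2_{g_x}}=\|\mathcal T_x^\nu\|_{(H^1(\Theta_x,g_x))'}$, so it suffices to run your four estimates for smooth $\psi\in C^1_{g_x}(\overline\Theta_x)$ (where the divergence theorem for $\psi\,\tilde g_x\mathbf V_x^\nu$ is the standard Gauss--Green formula for $W^{1,1}$ fields on a locally Lipschitz domain) and read off the operator norm; this collapses your "main technical obstacle" and makes your argument essentially the dual of the paper's.
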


In the derivation of \eqref{neumanngintro} from the {continuity equation} (see Section \ref{sectionproofs}),
we handle the derivative of the integral on the left-hand side of \eqref{directionalcontinuity} by making use of the \emph{Reynolds transport formula} from continuum mechanics (see Lemma \ref{reynolds} in the Appendix).
This explains the role of the vector $\mathbf V_x^\nu=\partial_\nu\Phi_x\circ\Phi_x^{-1}$ that represents the spatial velocity, defined on the deformed configuration $\ps_x$, whereas $\ps_*$ is the reference configuration.
This approach is, in a sense, alternative (although less general) to the optimal transport formalism and the Monge-Ampere equation, but suitable to the statistical framework, where probability densities are often defined by truncation. In this context, the extension map $\tilde g_x(\cdot)$ is actually given a-priori, as in the examples dealing with Pareto-type statistical models, in Subsection \ref{wilfrido}. 
Of course, if $\Theta_x=\Theta$ for $\mathcal L^m$-a.e. $x\in\X$, Theorem \ref{maintrace} is reduced to a particular instance of Theorem \ref{th:0}-{\bf (iii)}, as problem \eqref{neumanngintro} is reduced to \eqref{abstractelliptic}.

\subsection{A remarkable example: Pareto statistical models}\label{wilfrido}

Pareto statistical models, which are considered in the next two propositions, provide a paradigmatic  application of Theorem \ref{concretemixed}, as they gives rise to a moving support of probability densities that are defined by truncation. 

\begin{pro}\label{Pareto 1D} 
Consider the one-dimensional Pareto statistical model
\[
\mathbb{X}=(1,+\infty), \quad \Theta=(1,\theta_0), \quad \theta_0\in(1,+\infty],\quad f(x|\theta)=\frac{\theta}{x^2}\,\mathds{1}\{\theta<x\}.
\]
Suppose we are given a prior distribution $\pi$, whose support is  $\overline\Theta$, admitting a density $q\in L^1(\Theta)$, and let $Q(\theta):=\theta q(\theta)$.   Assume further that  $Q\in W^{1,1}(\Theta)$ and $1/Q\in L^1(\Theta)$, and let  $\;C_Q(\theta):= Q(\theta)\left(\int_1^\theta Q(\tau)\,d\tau\right)^{-1/2}\left(\int_1^\theta\frac{d\tau}{Q(\tau)}\right)^{1/2}$.  Then the posterior distribution $\pi(\cdot|\cdot)$, defined by means of \eqref{kernelgpi} and \eqref{Bayes}, satisfies \eqref{main_problem}  with distance $\mathcal W_2$ and $L=K$, 
where $K:=\sup_{x\in\mathbb X} C_Q(x\wedge \theta_0)$.  
\end{pro}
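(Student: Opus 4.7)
The plan is to apply Theorem \ref{concretemixed} to the moving domain $\Theta_x = (1,\, x\wedge\theta_0)$. Using Bayes' formula \eqref{bayesformula} and absorbing the prior density $q$ into the kernel (so as to reduce to $\pi = \mathcal{L}^1\mr\Theta$ as required by that theorem), the posterior density with respect to Lebesgue measure becomes
\[
g(x,\theta) \;=\; \frac{Q(\theta)\,\mathds{1}_{(1,\,x\wedge\theta_0)}(\theta)}{N(x)}, \qquad N(x) := \int_1^{x\wedge\theta_0} Q(\tau)\,\ud\tau.
\]
For $x \ge \theta_0$ (when $\theta_0<+\infty$), $g(x,\theta)$ does not depend on $x$, hence $x \mapsto \pi(\cdot|x)$ is constant on $[\theta_0,+\infty)$; the global Lipschitz bound will then follow by the triangle inequality, once the estimate is proved on $(1,\theta_0)$.

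For $x\in(1,\theta_0)$, I would realize the $\mathbb{X}$-regular motion of $\Theta_x$ by taking $\Theta_* = (0,1)$ and $\Phi_x(s) = 1 + s(x-1)$, a smooth one-parameter family of diffeomorphisms, whose velocity is $\mathbf V_x(\theta) = (\theta-1)/(x-1)$ (so $\mathbf V_x(1) = 0$ and $\mathbf V_x(x) = 1$). The natural regular extension is $\tilde g(x,\theta) = Q(\theta)/N(x\wedge\theta_0)$, which lies in $W^{1,1}_{\mathrm{loc}}(\mathbb{X}\times\Theta)$ thanks to $Q\in W^{1,1}(\Theta)$ and to the fact that $N$ is bounded away from zero on compact subsets of $\mathbb X$ (as $q>0$ a.e.\ on $\Theta$, since $\pi$ has full support); the second-moment condition is trivial since $\Theta_x$ is bounded. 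The Neumann BVP \eqref{neumanngintro}, in the unique direction $\nu = 1$, then reduces to $(Q\, u_x')' = Q(\theta)\,Q(x)/N(x)$ on $(1,x)$ together with the weighted boundary conditions $(g_x u_x')(1^+) = 0$ and $(g_x u_x')(x^-) = g_x(x)$. Integration from $1$ to $\theta$ gives the explicit formula
\[
u_x'(\theta) \;=\; \frac{Q(x)}{Q(\theta)\,N(x)}\int_1^\theta Q(\tau)\,\ud\tau,
\]
which is consistent with the transport-map derivation $v_x(\theta) = -\partial_x F_x(\theta)/F_x'(\theta)$ applied to the CDF $F_x(\theta) = N(x)^{-1}\int_1^\theta Q$.

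The required energy estimate is then a one-line computation: since $\int_1^\theta Q \le N(x)$ for every $\theta \le x$,
\[
\int_1^x |u_x'(\theta)|^2\,g_x(\theta)\,\ud\theta \;=\; \frac{Q(x)^2}{N(x)^3}\int_1^x \frac{\bigl(\int_1^\theta Q(\tau)\,\ud\tau\bigr)^2}{Q(\theta)}\,\ud\theta \;\le\; \frac{Q(x)^2}{N(x)}\int_1^x \frac{\ud\theta}{Q(\theta)} \;=\; C_Q(x)^2.
\]
Thus the quantity $K$ in \eqref{nuovoess} is bounded by $\sup_{x\in\mathbb{X}} C_Q(x\wedge\theta_0)$, and Theorem \ref{concretemixed} delivers the stated $\Wdue$-Lipschitz estimate. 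The main technical obstacle I anticipate is verifying that the pair $(\tilde g, \{\Phi_x\})$ fits the regular-motion framework at the transition point $x=\theta_0$ (when $\theta_0<+\infty$), and correctly interpreting the boundary condition at $\theta = 1$ in the weighted Sobolev sense whenever $Q$ degenerates there; both issues are handled by splitting $\mathbb{X}$ at $\theta_0$ and by working with the variational formulation of the Neumann problem in $H^1(\Theta_x, g_x)$, so that only the minimal regularity $Q\in W^{1,1}(\Theta)$ is actually used.
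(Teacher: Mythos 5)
Your proposal is correct and follows essentially the same route as the paper's proof: the same reduction to Theorem \ref{concretemixed}, the same affine regular motion (the paper takes $\Theta_*=(1,2)$ rather than $(0,1)$, which is immaterial), the same explicit solution $u_x'(\theta)=Q(x)\bigl(Q(\theta)\,N(x)\bigr)^{-1}\int_1^\theta Q$, and the same one-line energy bound yielding $C_Q(x)^2$. The only difference is that you spell out the case $x\ge\theta_0$ explicitly, whereas the paper proves the case $\theta_0=+\infty$ and refers to "minor variants" otherwise.
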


\begin{proof}  We give the proof in case $\theta_0=+\infty$ (minor variants are required if $\theta_0<+\infty$). 
The proof is a direct application of Theorem \ref{concretemixed}. 
With respect to the notation therein, we drop the apex $\nu$ as the directional derivative is reduced to the derivative in the $x$-variable.
We have by Bayes formula \eqref{Bayes}
$$
\rho(x)=\frac1{x^2}\,\int_1^x Q(\tau)\,\ud\tau,\qquad g_x(\theta)=\frac{f(x|\theta)q(\theta)}{\rho(x)}=\frac{Q(\theta)}{\int_1^xQ(\tau)\,\ud\tau}\,\mathds{1}\{\theta<z\},
$$
so that $\Theta_x=(1,x)$ is the positivity set of $g_x$. It is clear that the function $\X\times\Theta\ni(x,\theta)\mapsto g_x(\theta)$ satisfies $\int_{\Theta} g_x(\theta)\,\ud\theta=1$ and $\int_\Theta \theta^2 g(x,\theta)\,\ud\theta<+\infty$ for every $x\in\X$. Moreover, as required by Theorem \ref{concretemixed}, it admits a regular extension according to 
Definition \ref{densityxy} which is found later in Section \ref{sect:movingproof}.  
Indeed, we may define  $\Phi_x(\cdot):\Theta_*\to\Theta_x$ by $\Theta_*=(1,2)$ and $\Phi_x(\theta) = (\theta-1)(x-1)+1$. As a consequence, we have
$\partial_x\Phi_x\circ(\Phi_x)^{-1}(\theta)=\tfrac{\theta-1}{x-1}$ and $\Phi_x(\theta)$ satisfies the conditions of Definition \ref{xmotion}. 
Moreover, we may consider the natural extension 
$$
\tilde g_x(\theta) := \frac{Q(\theta)}{\int_1^x Q(\tau)\,\ud\tau},\qquad x\in(1,+\infty),\; \theta\in(1,+\infty).
$$
By the assumptions on $q$, the map $(t,\theta)\mapsto \tilde g_t(\theta)$ belongs to $W^{1,1}((\alpha,\beta)\times\Theta)$ for any $1<\alpha<\beta<+\infty$. It is indeed a  regular extension in the sense of Definition \ref{densityxy}. Therefore, problem \eqref{neumanngintro} is reduced to
\begin{equation}\label{neumanng1d}
\left\{\begin{array}{ll}
-(g_x\,u_x')' = \partial_x \tilde g_x \quad &\mbox{ in } (1,x)\\
 g_x(x)\,u_x'(x) = g_x(x)\\
g_x(1)\,u_x(1) = 0,
\end{array}\right.
\end{equation}
where the $'$ stands for the derivative in the $\theta$ variable. By taking into account that
$
\partial_x\tilde g_x(\theta) = -{Q(\theta)Q(x)}\left(\int_1^x Q(\tau)\,\ud\tau\right)^{-2},
$
the solution $u_x$ to problem \ref{neumanng1d} satisfies for any $x\in(1,+\infty)$
\[
u_x'(\theta) = \frac{Q(x)}{Q(\theta)}\left(\int_1^x Q(\tau)\,\ud\tau\right)^{-1}\int_1^\theta Q(\tau)\,\ud\tau.
\] 
We obtain
\begin{equation*}\label{cizeta}
\left(\int_1^x |u_x'(\theta)|^2\,g_x(\theta)\,\ud\theta\right)^{1/2}\le Q(x)\left(\int_1^x Q(\tau)\,\ud\tau\right)^{-1/2}\left(\int_1^x\frac{\ud\tau}{Q(\tau)}\right)^{1/2} = C_Q(x).
\end{equation*}
By the assumptions on $q$ it easily follows that $C_Q(x)$ is bounded on $\X$, so that the assumptions of Theorem \ref{concretemixed} are satisfied, thus we conclude by taking 
$\sup_{x\in\X} C_Q(x)$ as bound for the Lipschitz constant.
\end{proof}


\begin{pro}
Let us consider the statistical model
\[
\mathbb X =(1,+\infty), \quad (\theta,\eps)\in\Theta=(1,2)^2, \quad f(x|\theta,\eps) = \frac{\eps \theta^\eps}{x^{1+\eps}}\, \mathds{1}\{\theta<x\}
\]
along with a prior probability density $q\in C^2(\Theta)$ such that $0<c_q\le q(\theta,\eps)$ for any $(\theta,\eps)\in\Theta$. 
Then there exists  an explicit positive constant $Z_q$, only depending on $c_q$ and $\|q\|_2:=\sup_\Theta q+\sup_\Theta|\nabla q|+\sup_\Theta |\nabla^2 q|$, such that the posterior distribution satisfies
\eqref{main_problem} with distance $\Wdue$ and $L = Z_q$.
\end{pro}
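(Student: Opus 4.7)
The plan is to reduce to a fixed-parameter-domain problem by an $x$-dependent change of variable that straightens the varying support, and then to apply Theorem \ref{th:0}-\textbf{(iii)}. Set $\phi(x):=x\wedge 2$, so that $T_x(s,\eps):=(1+(\phi(x)-1)s,\,\eps)$ is a bijection from the reference rectangle $\Theta_\ast:=(0,1)\times(1,2)$ onto the positivity set $(1,\phi(x))\times(1,2)$ of $g(x,\cdot)$. Denote by $\bar\pi(\cdot|x):=(T_x^{-1})_\sharp\pi(\cdot|x)$ the pushforward posterior on $\Theta_\ast$, whose density with respect to $\mathcal L^2\mr\Theta_\ast$ reads
\[
\bar g_x(s,\eps)=\frac{\eps\,[1+(\phi(x)-1)s]^\eps\,q(1+(\phi(x)-1)s,\,\eps)}{x^{1+\eps}\,\tilde Z(x)},
\]
with $\tilde Z(x)$ the explicit normalization. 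The function $\bar g_x$ is strictly positive on $\Theta_\ast$, smooth in $(s,\eps)$, and locally Lipschitz in $x\in(1,+\infty)$ (smooth away from $x=2$ and continuous there), hence $\bar g$ satisfies the regularity hypothesis $\bar g\in L^1_{\mathcal L^2}(\Theta_\ast;W^{1,1}_{\mathrm{loc}}(\X))$ of Theorem \ref{th:0}-\textbf{(iii)}.

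Next I would bound the Poincar\'e-Fisher product $\mathcal C[\bar g_x\mathcal L^2\mr\Theta_\ast]\cdot\mathcal J_{\mathcal L^2}[\bar g_x]$ uniformly in $x$. By Payne-Weinberger on $\Theta_\ast$ (diameter $\sqrt 2$) combined with Holley-Stroock, $\mathcal C[\bar g_x\mathcal L^2\mr\Theta_\ast]$ is bounded by $(\sqrt 2/\pi)\sqrt{(8\|q\|_\infty/c_q)\cdot x}$, since the oscillation of $\log\bar g_x$ on $\Theta_\ast$ is at most $\log(8\|q\|_\infty/c_q)+\log x$. The Fisher functional is obtained from the explicit identity
\[
\partial_x\log\bar g_x=\phi'(x)\!\left[\frac{\eps s}{1+(\phi(x)-1)s}+\frac{s\,\partial_\theta q(\cdot)}{q(\cdot)}\right]-\frac{1+\eps}{x}-\frac{\tilde Z'(x)}{\tilde Z(x)},
\]
which is automatically centered under $\bar\pi(\cdot|x)$. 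For $x\in(1,2)$ every summand is pointwise bounded by constants depending only on $c_q,\|q\|_2$, yielding $\mathcal J[\bar g_x]\le C(c_q,\|q\|_2)$. For $x\ge 2$, since $\phi'(x)=0$, the identity reduces to $\partial_x\log\bar g_x=-(\eps-\mathbb E^{\bar\pi(\cdot|x)}[\eps])/x$, giving the sharper bound $\mathcal J[\bar g_x]=\sqrt{\mathrm{Var}_{\bar\pi(\cdot|x)}(\eps)}/x\le 1/(2x)$; this precisely cancels the $O(\sqrt x)$ growth of $\mathcal C$ coming from Holley-Stroock. Thus $\mathcal C\cdot\mathcal J$ is uniformly bounded, and Theorem \ref{th:0}-\textbf{(iii)} yields $\Wdue(\bar\pi(\cdot|x_1),\bar\pi(\cdot|x_2))\le\bar L\,|x_1-x_2|$ with $\bar L=\bar L(c_q,\|q\|_2)$ explicit.

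Finally I would transfer the estimate back: if $\gamma^\ast$ is an optimal $\Wdue$-coupling of $\bar\pi(\cdot|x_1),\bar\pi(\cdot|x_2)$ on $\Theta_\ast^2$, then $(T_{x_1},T_{x_2})_\sharp\gamma^\ast$ is a valid coupling of $\pi(\cdot|x_1),\pi(\cdot|x_2)$, and the elementary inequality
\[
|T_{x_1}(s_1,\eps_1)-T_{x_2}(s_2,\eps_2)|^2\le 2|(s_1,\eps_1)-(s_2,\eps_2)|^2+2|\phi(x_1)-\phi(x_2)|^2
\]
(which follows from $s_i\le 1$ and $\phi(x_i)-1\le 1$) gives $\Wdue^2(\pi(\cdot|x_1),\pi(\cdot|x_2))\le 2(\bar L^2+1)|x_1-x_2|^2$, hence the claim with $Z_q:=\sqrt{2(\bar L^2+1)}$.

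The main obstacle is the uniform bound on $\mathcal C\cdot\mathcal J$ as $x\to+\infty$, where Holley-Stroock alone produces a fictitious $\sqrt x$ growth in $\mathcal C$; this is compensated exactly by the $1/x$ decay of the Fisher functional, which reflects the variance of the $\eps$-marginal of $\bar\pi(\cdot|x)$ concentrating at $\eps=1$. A minor technical point is the non-$C^1$ character of $\phi$ at $x=2$: it is absorbed by the $W^{1,1}_{\mathrm{loc}}$-in-$x$ hypothesis of Theorem \ref{th:0}-\textbf{(iii)}, since $\bar g_x$ remains continuous across $x=2$.
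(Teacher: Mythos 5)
Your argument is correct, but it takes a genuinely different route from the paper's. The paper attacks the moving support $(1,x)\times(1,2)$ directly via Theorem \ref{concretemixed}: it poses the Neumann problem \eqref{neumanngnew2d1} on $\Theta_x$, estimates $\int_{\Theta_x}|\nabla u_x|^2 g_x$ by duality (introducing the antiderivative $G_x$ and the boundary datum $H_x$), and removes the $(x-1)^{-2}$ singularity of $\partial_x\tilde g_x$ near $x=1$ through a second-order Taylor expansion showing $N_\eps(1)=N_\eps'(1)=0$. You instead push the posterior forward to the fixed rectangle $\Theta_*$ by the (essentially identical) affine map and invoke the fixed-domain Theorem \ref{th:0}-{\bf(iii)}, transferring back with a coupling inequality; the Jacobian factor $\phi(x)-1$ is then absorbed into the normalization $\tilde Z(x)=\rho(x)/(\phi(x)-1)$. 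This buys a cleaner proof — no boundary terms, no weak-solution machinery — and the singularity cancellation at $x\to1^+$ reduces to the single observation that $\tilde Z$ extends to a $C^1$ function on $[1,2]$ bounded below by $c_q$, which is the paper's Taylor-expansion cancellation in different clothes. The price is the harmless extra factor $\sqrt{2(\bar L^2+1)}$ from the transfer. Your treatment of $x\ge 2$ is in fact more careful than the paper's: the posterior still depends on $x$ there through the residual factor $x^{-\eps}$, and your identity $\partial_x\log\bar g_x=-(\eps-\mathbb{E}[\eps])/x$, giving $\mathcal J\le 1/(2x)$ against the $O(\sqrt x)$ Holley--Stroock bound for $\mathcal C$, is exactly what is needed to close that case.

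Two assertions should be written out. First, ``every summand is pointwise bounded'' for $x\in(1,2)$ conceals the only delicate estimate of the whole proof: by \eqref{basicrho} both $1/(x-1)$ and $|\rho'(x)/\rho(x)|$ can be of order $(x-1)^{-1}$ as $x\to1^+$, and only their combination $-\tilde Z'(x)/\tilde Z(x)$ is bounded. You should record that
\[
\tilde Z(x)=\int_0^1\!\!\int_1^2 \sigma\,\bigl(1+(x-1)u\bigr)^{\sigma}\,x^{-(1+\sigma)}\,q\bigl(1+(x-1)u,\sigma\bigr)\,\ud\sigma\,\ud u
\]
is $C^1$ up to $x=1$ with $\tilde Z\ge c_q$, whence $|\tilde Z'/\tilde Z|\le C(c_q,\|q\|_2)$. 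Second, when invoking Theorem \ref{th:0}-{\bf(iii)} you should verify explicitly that $\nabla_x\bar g$ is locally integrable across the kink at $x=2$; this is immediate since $\phi$ is $1$-Lipschitz and the one-sided $x$-derivatives of $\bar g_x$ are bounded there, but it is part of the hypothesis $\bar g\in L^1(\Theta_*;W^{1,1}_{loc}(\X))$.
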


\begin{proof}
We first look at the values of $x\in(1,2]$.
 Let $C_q:=\sup_\Theta q$. A computation shows that for $\theta\in(1,2)$, $\eps\in(1,2)$,
\[
g_x(\theta,\eps) = \frac{f(\theta,\eps|x) q(\theta,\eps)}{\rho(x)} = \frac{\eps\theta^{\eps}q(\theta,\eps) \mathds{1}\{\theta<x\}(\theta,\eps)}{x^{\eps+1}\rho(x)},\]
where
\[
\rho(x) = \int_1^x\int_1^2\frac{\sigma\tau^{\sigma+1}}{x^{\sigma+1}}\,q(\tau,\sigma)\,\ud\sigma\,\ud\tau\ .
\]
An easy estimate shows that for any $x\in[1,2]$
\begin{equation}\begin{aligned}\label{basicrho}
&8C_q(x-1)\ge \rho(x)\ge c_q(x-1),\\& |\rho'(x)|\le 26C_q,\\& |\rho'(x)|+|\rho''(x)|+|\rho'''(x)|\le M \|q\|_2
\end{aligned}\end{equation}
where $M$ is a suitable numerical constant. Moreover
\[
\tilde g_x(\theta,\eps) = \frac{\eps\theta^{\eps}q(\theta,\eps)}{x^{\eps+1}\rho(x)}\qquad \mbox{and}  \qquad\partial_x \tilde g_x(\theta,\eps) = -A_x(\eps)\theta^\eps q(\theta,\eps),\]
where
\[
A_x(\eps) := \frac{\eps(\eps+1)}{x^{\eps+2}\rho(x)}+\frac{\eps\rho'(x)}{x^{\eps+1}\rho^2(x)},
\]
so that from \eqref{basicrho} we deduce that there exists a positive constant $K_q$, depending only on $c_q$ and $C_q$, such that
\begin{equation}\label{AZ}
|A_x(\eps)|\le \frac{K_q}{(x-1)^2}\quad\mbox{for any $\eps\in(1,2)$ and any $x\in(1,2]$}. 
\end{equation}
We see  that the map $(x,\theta,\eps)\mapsto\tilde g_x(\theta,\eps)$ belongs to $W^{1,1}((\alpha,\beta)\times(1,2)\times(1,2))$ for any $1<\alpha<\beta<2$.
Moreover, for any $x\in(1,2)$ the map $(\theta,\eps)\mapsto \partial_x\tilde g_x(\theta,\eps)$ belongs to $L^1(\Theta_x)$ where $\Theta_x := (1,x)\times(1,2) = \Phi_x(\Theta_2)$ and $\Phi_x(\theta,\eps)=((\theta-1)(x-1)+1,\eps)$ has first component as in the proof of Proposition \ref{Pareto 1D}. 
 In this way, we see that indeed $(x,\theta,\eps)\mapsto \tilde g_x(\theta,\eps)$ is a regular extension of the function $(x,\theta,\eps)\mapsto g_x(\theta,\eps)$ on $(1,2)\times\Theta$, according to Definition \ref{densityxy}. 
In order to conclude, we apply Theorem \ref{concretemixed}. The corresponding Neumann boundary value problem is posed on a rectangle, and precisely it is (again we drop the apex $\nu$ as $\X$ is one-dimensional)
\begin{equation*}\label{neumanngnew2d1}
\left\{\begin{array}{ll}
-\mathrm{div}(g_x\nabla u_x) = \partial_x\tilde g_x\quad &\mbox{ in }\Theta_x\\
\partial_\theta u_x = 1\quad &\mbox{ on }\Gamma_x := \{x\}\times(1,2)\\
\nabla u_x\cdot \mathbf n_x = 0\quad &\mbox{ on } \partial\Theta_x\setminus\Gamma_x.
\end{array}\right.
\end{equation*}
We define
$
G_x(\theta,\eps) := \int_1^\theta\partial_x \tilde g_x(\tau,\eps)\,\ud\tau = -{A_x(\eps)}\int_1^\theta\tau^\eps\,q(\tau,\eps)\,\ud\tau,
$
and we proceed with the estimate of the $H^1(\Theta_x,g_x)$ norm of $u_x$ by duality and using the notion of weak solution to the above problem (see Definition \ref{soluzionedebole}). We obtain after an integration by parts in the $\theta$-variable
\[
\begin{aligned}
\int_{\Theta_x} |\nabla u_x|^2\,g_x &= \sup_{\|\psi\|_x=1} \int_{\Theta_x}\nabla u_x\cdot\nabla \psi\,g_x\\&=\sup_{\|\psi\|_x=1}\left(\int_{\Theta_x}\psi(\theta,\eps)\partial_x\tilde g_x(\theta,\eps)\,\ud\theta\,\ud\eps+\!\int_1^2\psi(x,\eps) g_x(x,\eps)\,\ud\eps\right)\\
&= \sup_{\|\psi\|_x=1}\left(\int_{\Theta_x}-\partial_\theta\psi(\theta,\eps)G_x(\theta,\eps)\,\ud\theta\,\ud\eps+\int_1^2\psi(x,\eps)(G_x(x,\eps)+g_x(x,\eps))\,\ud\eps\right)
\end{aligned}\]
where the supremum is taken among test functions $\psi\in C^1_{g_x}(\overline\Theta_x)$ and $\|\psi\|_x$ is a shorthand for the norm $(\int_{\Theta_x}|\nabla\psi|^2 g_x)^{1/2}$ on $C^1_{g_x}(\overline\Theta_x)$. 
Therefore, with the notation $H_x(\eps) := G_x(x,\eps) + g_x(x,\eps)$ and with the divergence theorem we obtain
\[
\begin{aligned}
\int_{\Theta_x} |\nabla u_x|^2\,g_x& = \sup_{\|\psi\|_x=1} \left(\int_{\Theta_x}-G_x(\theta,\eps)\,\partial_\theta\psi(\theta,\eps)\,\ud\theta\,\ud\eps+\int_{\Theta_x}\mathrm{div}\left(\psi(\theta,\eps)\,\tfrac{\theta-1}{x-1}\,(H_x(\eps),0)\right)\,\ud\theta\,\ud\eps\right)\\
&=\sup_{\|\psi\|_x=1}\left(\int_{\Theta_x}\partial_\theta\psi\left(\tfrac{\theta-1}{x-1}\,H_x(\eps)-G_x(\theta,\eps)\right)\,\ud\theta\,\ud\eps+\int_{\Theta_x}\frac{\psi(\theta,\eps)H_x(\eps)}{x-1}\,\ud\theta\,\ud\eps\right)\\
&\le\sup_{\|\psi\|_x=1}\left(\int_{\Theta_x}|\nabla\psi(\theta,\eps)|\,|G_x(\theta,\eps)+H_x(\eps)|\,\ud\theta\,\ud\eps + \int_{\Theta_x}\frac{|\psi(\theta,\eps)H_x(\eps)|}{x-1}\,\ud\theta\,\ud\eps\right).
\end{aligned}
\]
By Cauchy-Schwarz inequality and  \eqref{wirtinger}, if  $\mathcal C[g_x]$ is the Poincar\'e constant of the probability measure $g_x\,\mathcal L^2\mr\Theta_x$, we get
\begin{equation}\label{estimo}
\begin{aligned}
\int_{\Theta_x}|\nabla u_x|^2\,g_x&\le\left(\left(\int_{\Theta_x}\frac{(G_x+H_x)^2}{g_x}\right)^\frac12+\frac{\mathcal C[g_x]}{x-1}\left(\int_{\Theta_x}\frac{H_x^2}{g_x}\right)^\frac12\right).
\end{aligned}
\end{equation}
Let us now compute suitable bounds for the terms in the above right hand side. 
From \eqref{basicrho} and \eqref{AZ} we get for any $\theta\in(1,2)$, any $\eps\in(1,2)$ and any $x\in(1,2]$ that
$
|G_x(\theta,\eps)| \le 4C_q|A_x(\eps)|\,(x-1)\le \frac{4C_q K_q}{x-1}
$ and thus
\begin{equation}\label{zz2}
\int_{\Theta_x}\frac{G_x(\theta,\eps)^2}{g_x(\theta,\eps)}\,\ud\theta\,\ud\eps\le 2^{10} c_q^{-1}C_q^3 K_q^2,\qquad \mbox{for every $x\in (1,2]$}.
\end{equation}
Since
$
H_x(\eps) 
= \dfrac{N_\eps(x)}{x^{\eps+2}\rho^2(x)}$,
where 
$$
N_\eps(x) := \left(-\eps(\eps-1)\rho(x)-\eps x\rho'(x)\right)\int_1^x\tau^\eps\,q(\tau,\eps)\,\ud\tau + \eps x^{1+\eps}\rho(x) q(x,\eps),
$$
we have $N_\eps(1)=0$, and since
$
N'_\eps(x) 
$
also vanishes at $x=1$, by a  Taylor expansion in $x$ we have $N_\eps(x)=\tfrac12(x-1)^2N''_\eps(\xi_\eps)$ for some $\xi_\eps\in[1,x]$ and a computation exploiting \eqref{basicrho} shows that for any $x\in (1,2]$ there holds 
$|N''_\eps(x)|\le 2U \|q\|_2$ for a universal constant $U$, so that we deduce $H_x(\eps)\le c_q^{-2}{U\|q\|_2}$ for any $\eps\in(1,2)$ and any $x\in(1,2]$. As a consequence, \eqref{basicrho} implies that for every $x\in(1,2]$
\begin{equation}\label{zz1} 
\frac1{x-1}\left(\int_{\Theta_x}\frac{H_x^2(\eps)}{g_x(\theta,\eps)}\,\ud\theta\,\ud\eps\right)^{\frac12} \le 8\,C_q^{-3/2}\,c_q^{-1/2}\, U\,\|q\|_2.
\end{equation}
Let us moreover treat $\mathcal C[g_x]$ by invoking the Holley-Stroock estimate (see Section \ref{section:mainresults}): letting $V_x(\theta,\eps):= -\log g_x(\theta,\eps)$, we have  $\mathcal C^2[g_x]\le \exp\{\sup_{\Theta_x}V_x-\inf_{\Theta_x}V_x\}\,\mathcal C^2[\mathcal L^2\mr\Theta_x]$, where $\mathcal C[\mathcal L^2\mr\Theta_x]$ enjoys the standard estimate \cite{PW} in terms of $\mathrm{diam}(\Theta_x)/\pi\le \sqrt2/\pi$, for any $1<x\le2$. Since a direct estimate shows that
$
\sup_{\Theta_x} V_x-\inf_{\Theta_x} V_x 
\le \log(C_q/c_q)+6\log2$ for every $x\in(1,2]$,
we eventually get $\mathcal C^2[g_x]\le 2^7\pi^{-2}\, C_q/c_q $. This estimate can be inserted in \eqref{estimo}, together with \eqref{zz1} and \eqref{zz2}, and we deduce that for a suitable explicit constant $Z_q$, only depending on $c_q$ and $\|q\|_2$, there holds
$
\int_{\Theta_x}|\nabla u_x|^2\,g_x\le Z_q^2$ for any $z\in(1,2]$.
In conclusion, Theorem \ref{concretemixed} yields the validity of \eqref{main_problem} with $\Wdue$ distance and $L = Z_q$ for any $x_1,x_2 \in(1,2]$. Since the density of $\pi(\cdot,\cdot|x)$ is $g_x(\cdot,\cdot)$, and since the latter is given by
$g_x(\theta,\eps) = {\eps\theta^{\eps}q(\theta,\eps)}/\mathcal Q$ for any $x>2$, where $\mathcal Q:={\int_1^2\int_1^2\sigma\tau^\sigma q(\tau,\sigma)\,\ud\tau\,\ud\sigma}$, the estimate \eqref{main_problem}
trivially extends to all $x_1,x_2\in\X$.
\end{proof}

\subsection{Infinite-dimensional sample space}\label{infinitesection} 

Here, generalizing the setting displayed in  Assumptions \ref{ass:1},
we extend Theorem \ref{th:0} to the case in which $\X$ is a convex set of a real separable Banach space $(\V, \|\cdot\|_{\V})$, endowed with a $\sigma$-finite reference measure $\lambda$. 
Upon denoting by $\mathcal H^1$ the 1-dimensional \emph{Hausdorff measure} on $(\V, \Bcr(\V))$, we prescribe the following properties for $\lambda$:
\begin{enumerate}
\item[$\lambda 1)$] $\lambda(A) > 0$ for every nonempty open set $A \in \Bcr(\V)$;
\item[$\lambda 2)$] if $\lambda(A) = 0$ for some $A \in \Bcr(\V)$, then $\lambda(\alpha A + v) = 0$ for all $\alpha > 0$ and $v \in \V$;
\item[$\lambda 3)$] for a set $A \in \Bcr(\V)$, the condition $\lambda\left(\left\{v \in \V\ |\ \mathcal H^1([0,v] \cap A) > 0\right\}\right) = 0$ entails $\lambda(A) = 0$, where $[0,v] := 
\{\alpha v\ |\ \alpha \in [0,1]\}$.
\end{enumerate}
We plainly observe that $\V$ can be taken equal to any Euclidean space $\rd$ for any $d \in \N$, and that the standard $d$-dimensional Lebesgue measure fulfills the conditions $\lambda 1)-\lambda 2)-\lambda 3)$. 
We  notice that $\lambda1)$ entails that the complement of any $\lambda$-null set is dense in $\V$. Non-degenerate Gaussian measures on $\V$ also satisfy $\lambda 1)-\lambda 2)-\lambda 3)$. See \cite{phelps}. 

In the next theorem, we again confine ourselves to treating kernels of the form \eqref{kernelgpi}, but now we stress that $g:\mathbb X\times\ps\to\R$ is defined pointwise. 
Therefore, $g$ and $\pi$ determine pointwise the probability kernel $\pi(\cdot|\cdot)$.    
Since we let $\X$ be a convex subset of $\V$, we have $\ud_\X(x_1,x_2):=\|x_1-x_2\|_{\V}$.  
For a $\lambda$-null subset $\Z$ of $\X$,
we define
\begin{equation}\label{Bset}
\mathbb{B}(\Z):=\{(x_1,x_2)\in\X^2: x_1, x_2\in\X\setminus \Z,\, \mathcal H^1([x_1,x_2] \cap \Z) = 0\}.
\end{equation}
We let $D_x$ denote the Gateaux differential operator with respect to the $x$-variable and $\V'$ be the dual space of $\V'$ with operator norm $\|\cdot\|_{\V'} $. 
Finally, the Fisher functional relative to $\pi$ is now defined as
$$\mbox{  
$\;\mathcal{J}_\pi[g(x,\cdot)]:=\displaystyle\left(\int_\Theta \frac{\left\|D_x g(x,\theta)\right\|_{\mathbb{V'}}^2}{g(x,\theta)}\, \pi(\ud\theta)\right)^{\frac12}.$}
$$
The way is now paved for the formulation of the following
\begin{theorem}\label{infinity} 
Suppose there exists a $\lambda$-null set $\Z\subset\X$ such that, for $\pi$-a.e. $\theta\in\Theta$, the mapping $x\mapsto g(x,\theta)$ is Gateaux-differentiable at any $x\in \X\setminus\Z$ and absolutely continuous on any segment $[x_1,x_2]$ with $(x_1,x_2)\in\mathbb B(\Z)$. When $\Wp$ is involved, it is further assumed that $\int_\ps |\theta|^p \pi(\ud\theta|x)<+\infty$ for any  $x\in\X$. Then, the following statements hold.
\begin{itemize}
\item[\bf(i)] Suppose that
$$
K:=\displaystyle\lesssup_{x\in\X} \int_\Theta\|D_x g(x,\theta)\|_{\V'}\,\pi(\ud\theta)<+\infty.
$$
Then, there exists a $d_{TV}$-Lipschitz version of $\pi(\cdot|\cdot)$, satisfying \eqref{main_problem} with $L=K/2$. 
\item[\bf(ii)]  
Let $1\le q<+\infty$ and let $p$ be the H\"older conjugate exponent of $q$. If $\pi$ admits a Poincar\'e constant $\mathcal C_q[\pi]$ and if
\[
K:=\pi(\Theta)^{\frac1q}\,\mathcal C_q[\pi]\,\lesssup_{x\in\mathbb X}\Big{\|}\|D_x g(x,\cdot)\|_{\V'}\Big{\|}_{L^p_\pi(\Theta)}<+\infty,
\]
then there exists a $\Wuno$-Lipschitz version of $\pi(\cdot|\cdot)$, satisfying \eqref{main_problem} with $L=K$. 
\item[\bf(iii)] 
Let  $g>0$ $(\lambda\otimes\pi)$-a.e. in $\X\times\Theta$
and let $\int_0^1\int_\Theta \|D_x g(\mathbf s_{x_1,x_2}(t),\theta)\|_{\V'}\,\pi(\ud\theta)<+\infty$ for any $(x_1,x_2)\in\mathbb B(\Z)$. If
\[
K:=\lesssup_{x\in\X}\mathcal C[g(x,\cdot)\,\pi]\,\mathcal J_\pi[g(x,\cdot)]<+\infty,
\]
then there exists a $\mathcal W_2$-Lipschitz version of $\pi(\cdot|\cdot)$, satisfying \eqref{main_problem} with $L=K$. 
\end{itemize}
\end{theorem}

If $\mathbb V$ is infinite-dimensional, a total variation distance estimate like the one of Theorem \ref{infinity}-{\bf (i)} can be found under stronger assumptions like the following:
the map $x\mapsto g(x,\theta)$ is Lipschitz with a constant $L_\theta$ satisfying $\int_\ps L_\theta\,\pi(\ud\theta)<+\infty$ and $\lambda$ is a Gaussian measure. In particular, these last assumptions imply Gateaux differentiability, according to \cite[Theorem 1.1]{LP} and references therein.

\section{Proofs}\label{sectionproofs}

\subsection{The dynamic formulation of the Wasserstein distance}\label{theorysection}
Here, we provide the theoretical framework for the estimate of the $2$-Wasserstein distance. We start by introducing some facts about the dynamic formulation by means of the continuity equation and the Benamou-Brenier \cite{BB} formula, which are related to the geometry of the space of probability measures. This theory is established in the seminal paper by Otto \cite{O}, see also \cite{OV}, in the books of Villani \cite{vilMass, V2}, as well as the book by Ambrosio, Gigli and Savar\'e \cite{AGS}, see also \cite{AG}. 
 Then, we give most emphasis to the continuity equation as a family of degenerate elliptic boundary value problems, parametrized by time.

We let $\Theta$ be an open connected subset of $\mathbb{R}^d$. We recall that  by $C^1_c(\overline\Theta)$ we denote the space of  functions in $C^1(\overline\Theta)$ whose support is a compact set contained in $\overline\Theta$. Of course, $C^1_c(\overline\Theta)\equiv C^1(\overline\Theta)$ if $\Theta$ is bounded. 
The space
$C^1_c(\overline\Theta)$ is separable with respect to the $C^1(\overline\Theta)$ norm $\|\psi\|_{C^1(\overline\Theta)}:=\sup_{\overline\Theta}|\psi|+\sup_{\overline\Theta}|\nabla\psi|$ and contains the space of $C^1$ functions with compact support in $\Theta$.

We shall consider Borel families of measures $\{\mu_t\}_{t\in[0,1]}\subset\mathcal{P}(\overline\Theta)$, i.e., $[0,1]\ni t\mapsto\mu_t(A)$ is Borel measurable for any Borel set $A\subseteq \overline\Theta$. 
Moreover, $[0,1]\ni t\mapsto \mu_t\in\mathcal{P}(\overline\Theta)$ is said to be a narrowly continuous curve if $t,t_0\in[0,1]$ and $t\to t_0$ imply the narrow convergence of $\mu_t$ to $\mu_{t_0}$.
In the following, we say that a narrowly continuous curve $[0,1]\ni t\mapsto \mu_t\in\mathcal{P}(\overline\Theta)$ satisfies the continuity equation on $\overline\Theta$, in coupling with a  family of vector fields 
$\{\mathbf{v}_t\}_{t\in[0,1]}$ such that $[0,1]\times\overline\Theta\ni(t,\theta)\mapsto\mathbf{v}_t(\theta)\in\rd$ is Borel measurable, if
\begin{equation}\label{distributionalcontinuityequation}
\int_0^1\int_{\overline\Theta}(\partial_t\varphi(t,\theta)+\nabla\varphi(t,\theta)\cdot\mathbf{v}_t(\theta))\,\mu_t(\ud\theta)\,\ud t=0\qquad\mbox{for any $\varphi\in C^1_c((0,1)\times\overline\Theta)$}.
\end{equation} 
Here and in the following, $\nabla$ denotes the gradient in the $\theta$ variable.

We start by giving sufficient conditions on the curve $[0,1]\ni t\mapsto\mu_t$  in order to apply the Benamou-Brenier formula and estimate the $2$-Wasserstein distance between $\mu_0$ and $\mu_1$.
\begin{theorem}\label{abs}
Let $\mu_0\in\mathcal P_2(\overline\Theta)$, $\mu_1\in\mathcal P_2(\overline\Theta)$, and let
 $\{\mu_t\}_{t\in[0,1]}\subset \mathcal{P}(\overline\Theta)$ be a Borel family of probability measures. 
 Let $\mathcal D$ be a countable dense subset of $  C^1_c(\overline\Theta)$  (with respect to the $C^1(\overline\Theta)$ norm).
Suppose that
\begin{itemize}
\item[i)] the map
$[0,1]\ni t\mapsto \displaystyle\int_{\overline\Theta}\psi(\theta)\mu_t(\ud\theta)$ is absolutely continuous for any $\psi\in \mathcal D$,
\item[ii)] 
$\Psi\in L^1(0,1)$, where 
$$
\Psi(t):=\sup\left\{\displaystyle\frac{\ud}{\ud s}\int_{\overline\Theta}\psi(\theta)\,\mu_s(\ud\theta){\Bigg{|}_{s=t}}:\quad \psi\in \mathrm{span}\mathcal D,\; \int_{\overline\Theta} |\nabla\psi(\theta)|^2\mu_t(\ud\theta)\le 1\right\}.
$$
\end{itemize}
Then, for a.e. $t\in (0,1)$, there exists a unique vector field $\mathbf{w}_t\in \overline{\{\nabla\psi: \psi\in C^1_c(\overline\Theta)\}}^{L^2_{\mu_t}(\overline\Theta;\rd)}$ which is solution to 
\begin{equation}\label{mixed}
\langle \mathbf{w}_t, \nabla\psi\rangle_{L^2_{\mu_t}(\overline\Theta;\rd)}=\frac{\ud}{\ud s}\int_{\overline\Theta}\psi(\theta)\,\mu_s(\ud\theta){\Bigg{|}_{s=t}}\qquad\forall \psi\in \mathrm{span} \mathcal D.
\end{equation}
Moreover, $\|\mathbf{w}_t\|_{L^2_{\mu_t}(\overline\Theta;\rd)}=\Psi(t)$ holds for a.e. $t\in(0,1)$, $\mu_t\in \mathcal P_2(\overline \Theta)$ for any $t\in(0,1)$ and 
\begin{equation}\label{BeBr}
\mathcal W_2(\mu_{t_1},\mu_{t_2})\le \int_{t_1}^{t_2} \|\mathbf{w}_t\|_{L^2_{\mu_t}(\overline\Theta;\rd)}\,dt
\qquad\mbox{for any $\;0\le t_1<t_2\le 1$}.
\end{equation}
\end{theorem}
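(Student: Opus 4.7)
The plan is to split the argument into a Hilbert-space construction of $\mathbf{w}_t$ via Riesz representation, a distributional upgrade of \eqref{mixed} to a full continuity equation, and an invocation of the Benamou-Brenier inequality to conclude.

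First, using (i) together with the countability of a rational basis of $\mathrm{span}\,\mathcal D$, one finds a negligible set $N_0\subset (0,1)$ off which the map $t\mapsto \int_{\overline\Theta}\psi\,\ud\mu_t$ is differentiable at $t$ simultaneously for every $\psi\in\mathrm{span}\,\mathcal D$ (the extension from rational to real coefficients being automatic by linearity). By (ii), $\Psi(t)<+\infty$ off a further negligible set $N_1$. Fixing $t\notin N_0\cup N_1$, the functional $\psi\mapsto \frac{\ud}{\ud s}\int \psi\,\ud\mu_s\big|_{s=t}$ is linear on $\mathrm{span}\,\mathcal D$ and, by the very bound defining $\Psi(t)$, vanishes whenever $\|\nabla\psi\|_{L^2_{\mu_t}(\overline\Theta;\R^d)}=0$. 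Hence it factors through the gradient map to produce a bounded linear functional on $V_t:=\{\nabla\psi:\psi\in\mathrm{span}\,\mathcal D\}\subset L^2_{\mu_t}(\overline\Theta;\R^d)$, of operator norm exactly $\Psi(t)$. Riesz representation on the Hilbert closure $\overline{V_t}$ delivers a unique $\mathbf{w}_t\in\overline{V_t}$ realizing \eqref{mixed}, and by construction $\|\mathbf{w}_t\|_{L^2_{\mu_t}(\overline\Theta;\R^d)}=\Psi(t)$. Since $\mathcal D$ is $C^1$-dense in $C^1_c(\overline\Theta)$ and $\mu_t$ is a probability measure, $C^1$-convergence is stronger than $L^2_{\mu_t}$-convergence of gradients, so the closure $\overline{V_t}^{L^2_{\mu_t}}$ agrees with the closure of $\{\nabla\psi:\psi\in C^1_c(\overline\Theta)\}$ required by the statement.

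Next, I would promote \eqref{mixed} to the distributional continuity equation \eqref{distributionalcontinuityequation}. For test functions of tensor form $a(t)\psi(\theta)$ with $a\in C^1_c(0,1)$ and $\psi\in\mathcal D$, integration by parts in $t$ (justified by the absolute continuity granted by (i)) combined with \eqref{mixed} gives the identity. A standard density argument, approximating an arbitrary $\varphi\in C^1_c((0,1)\times\overline\Theta)$ by finite tensor sums on its compact support and exploiting $\Psi\in L^1(0,1)$ as a dominant in the limit, then extends the identity to all admissible $\varphi$.

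With the continuity equation at hand, I would apply the theory of absolutely continuous curves in $(\mathcal P_2(\overline\Theta),\mathcal W_2)$ from \cite{AGS}. Since only $\Psi\in L^1(0,1)$ is available, I would reparametrize time by the arc-length $s(t):=\int_0^t\|\mathbf{w}_r\|_{L^2_{\mu_r}}\,\ud r$ so that the reparametrized velocity has unit $L^2$-norm in the new time variable; the classical $L^2$ Benamou-Brenier action estimate then applies to the reparametrized curve and, undoing the change of variables, yields
\[
\mathcal W_2(\mu_{t_1},\mu_{t_2})\le \int_{t_1}^{t_2}\|\mathbf{w}_t\|_{L^2_{\mu_t}(\overline\Theta;\R^d)}\,\ud t.
\]
Coupling this with $\mu_0\in\mathcal P_2(\overline\Theta)$ and the triangle inequality forces $\mu_t\in\mathcal P_2(\overline\Theta)$ for every $t\in(0,1)$. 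I expect this final passage to be the main obstacle, because the standard Benamou-Brenier framework is $L^2$-based whereas (ii) only yields $L^1$ integrability of $\Psi$, and because the underlying superposition principle must be applied on the possibly unbounded set $\overline\Theta$; the reparametrization deals with the first issue, while a truncation in $\theta$ together with $\mu_0\in\mathcal P_2$ deals with the second.
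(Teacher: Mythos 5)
Your overall strategy coincides with the paper's: a fiberwise Riesz representation on the closure of $\{\nabla\psi\}$ in $L^2_{\mu_t}(\overline\Theta;\R^d)$ to produce $\mathbf{w}_t$ with $\|\mathbf{w}_t\|_{L^2_{\mu_t}}=\Psi(t)$, an upgrade of \eqref{mixed} to the space--time continuity equation \eqref{distributionalcontinuityequation}, and an appeal to the Benamou--Brenier/AGS characterization of absolutely continuous curves to obtain \eqref{BeBr} and $\mu_t\in\mathcal P_2(\overline\Theta)$. Your well-definedness argument for the functional (it vanishes on $\psi$ with $\|\nabla\psi\|_{L^2_{\mu_t}}=0$, by scaling against the bound $\Psi(t)$) is a clean substitute for the paper's remark about potentials being unique up to constants, and your arclength reparametrization is exactly the device the paper uses.

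There is, however, one genuine gap in the middle step. Constructing $\mathbf{w}_t$ separately for each $t$ by Riesz representation gives no control on the joint measurability of $(t,\theta)\mapsto\mathbf{w}_t(\theta)$, and without Borel measurability in $(t,\theta)$ the integral $\int_0^1\int_{\overline\Theta}\nabla\varphi\cdot\mathbf{w}_t\,\ud\mu_t\,\ud t$ in \eqref{distributionalcontinuityequation} is not even defined, so neither your density argument over tensor test functions nor the subsequent application of \cite[Theorem 8.3.1]{AGS} can be run as stated. The paper resolves this with a dedicated appendix result (Lemma \ref{technical}): after reparametrizing time by arclength so that the reparametrized $\bar\Psi$ is in $L^\infty$ (this, not the $L^1$-vs-$L^2$ issue in Benamou--Brenier, is the real reason the reparametrization is needed, since the AGS converse statement already accepts $L^1$-in-time velocities), it performs a single Riesz representation in the space--time Hilbert space $L^2_{\bar\mu}((0,1)\times\overline\Theta;\R^d)$ with $\bar\mu=\bar\mu_t\otimes\ud t$, which produces a globally Borel velocity field at once; a DuBois--Reymond argument then identifies its time slices with the fiberwise solutions of \eqref{mixed}. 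Your write-up needs either this space--time construction or an explicit measurable-selection argument before the final step; everything else is in order.
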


\begin{proof}
 We preliminarily notice  that assumption {\it i)} implies that the curve $[0,1]\ni t\mapsto\mu_t$ is narrowly continuous, in view of the Portmanteau theorem (see, e.g. \cite[Section 2]{bill2}). Moreover, since $\mathcal D$ is countable, there exists a  $\mathcal{L}^1$-null set $N\subset (0,1)$ such that the mapping $t\mapsto\int_\Theta\psi(\theta)\,\mu_t(d\theta)$ is differentiable at $t\in (0,1)\setminus N$ for any $\psi\in \mathrm{span}\mathcal D$. Then the supremum in assumption {\it ii)} is well defined (and nonnegative) for every $t$ in $(0,1)\setminus N$, hence for a.e. $t\in(0,1)$.

A gradient vector field in $\{\nabla\psi: \psi\in \mathrm{span}\mathcal D\}$ admits a potential on $\overline\Theta$ which is unique up to a constant, therefore by mass conservation we see that for a.e. $t\in(0,1)$
\[
\mathcal{T}_t[\nabla \psi]:=\frac{\ud}{\ud s}\int_{\overline\Theta} \psi(\theta)\mu_s(\ud\theta){\Bigg{|}_{s=t}}
\]
defines indeed a linear functional on $\{\nabla\psi: \psi\in \mathrm{span}\mathcal D\}$. Moreover, since for any $\psi\in C^1_c(\overline\Theta)$ and any $t\in(0,1)$ there holds
$
\|\nabla \psi\|_{L^2_{\mu_t}(\overline\Theta;\rd)}\le\sup_{\overline\Theta}|\nabla\psi|,
$
we see that $\{\nabla\psi:\psi\in \mathrm{span}\mathcal D\}$ is dense in the $L^2_{\mu_t}(\overline\Theta;\rd)$ closure of the linear space $\{\nabla\psi: \psi\in C^1_c(\overline\Theta)\}$ for any $t\in(0,1)$.
Therefore, assumption {\it ii)} shows that, for a.e. $t\in(0,1)$, the operator $\mathcal{T}_t$ uniquely extends to a bounded linear functional on  the space $\overline{\{\nabla\psi: \psi\in C^1_c(\overline\Theta)\}}^{L^2_{\mu_t}(\overline\Theta;\rd)},$ where we find by Riesz representation theorem a unique vector field $\mathbf{w}_t(\cdot)$  such that
\begin{equation*}\label{1variable}
\mathcal{T}_t[\nabla\psi]=\int_{\overline\Theta}\langle \mathbf{w}_t(\theta),\nabla\psi(\theta)\rangle\,\mu_t(\ud\theta)\qquad \forall{\psi\in \mathrm{span}\mathcal D}
\end{equation*}
and such that $\|\mathbf{w}_t\|_{L^2_{\mu_t}(\overline\Theta;\rd)}=\Psi(t)$, thus $\mathbf{w}_t$ is the desired  solution to \eqref{mixed}.

At this stage, it is possible to prove (we refer to  \cite[Theorem 8.3.1]{AGS}, and we also include a proof in Lemma \ref{technical} in the Appendix) that there exists of a Borel map $[0,1]\times\overline\Theta\ni (t,\theta)\mapsto\mathbf{v}(t,\theta)\in\rd$ such that 
$\|\mathbf{v}(t,\cdot)\|_{L^2_{\mu_t}(\overline\Theta;\rd)}=\|\mathbf{w}_t\|_{L^2_{\mu_t}(\overline\Theta;\rd)}$ for a.e. $t\in(0,1)$ and such that the couple $(\mathbf{v}(t,\cdot),\mu_t)$ satisfies the continuity equation \eqref{distributionalcontinuityequation}. As a consequence, we may invoke the  Benamou-Brenier formula (see \cite[Theorem 8.3.1]{AGS} and \cite[Proposition 3.30]{AG}) to get $\mu_t\in \mathcal P_2(\overline\Theta)$ for every $t\in(0,1)$ and the validity of the estimate \eqref{BeBr}.
\end{proof}

\begin{remark}[Tangency condition]
\rm
Following \cite[Section 8.4]{AGS}, we define the tangent space to a measure $\mu$ in $\mathcal{P}_2(\overline\Theta)$ by  
$$
\mathcal{TAN}_\mu(\mathcal{P}_2(\overline\Theta)):=\overline{\{\nabla\psi: \psi\in C^1_c(\overline\Theta)\}}^{L^2_{\mu}(\overline\Theta;\rd)}.
$$ 
Therefore, in Theorem \ref{abs}, we conclude that $\mathbf{w}_t\in\mathcal{TAN}_{\mu_t}(\mathcal{P}_2(\overline\Theta))$ for a.e. $t\in(0,1)$. This is equivalent to saying that $\mathbf{w}_t$ has minimal $L^2_{\mu_t}(\overline\Theta;\rd)$ norm among $L^2_{\mu_t}(\overline\Theta;\rd)$ solutions  to \eqref{mixed}, see also \cite[Section 3.3.2]{AG}. 
\end{remark}

We notice that $\mu_t$ can be either supported on the whole of $\overline\Theta$ or on a subset which possibly depends on $t$.  
The rest of this section is devoted to a further analysis of the case of mobile support, starting with  some more  definitions and notation.

\begin{defi}[\bf Regular motion]\label{motion}
Let  $\Theta_* \subseteq\rd$ be a nonempty open connected set with locally Lipschitz boundary. We say that a smooth mapping $[a,b]\times\Theta_*\ni (t,\theta)\mapsto \Phi_t(\theta)\in\rd$ is a {\it regular motion} in 
$\Theta$ if the following conditions hold. For any $t\in[a,b]$, $\Phi_t$ is a diffeomorphism  between  $\Theta_*$ and a nonempty open connected set with locally Lipschitz boundary $\Theta_t:=\Phi_t(\Theta_*)\subseteq\Theta$. Further,  
there exist positive constants $k_1,k_2$ such that for any $t\in[a,b]$ and any $\theta\in\Theta_*$
\[
|\partial _t\Phi_t(\theta)|+|\nabla\Phi_t(\theta)|+|\nabla \partial_t\Phi_t(\theta)|\le k_2 \qquad\mbox{and}\qquad k_1\le \det\nabla\Phi_t(\theta).
\]
\end{defi}

We notice that under the assumptions of Definition \ref{motion}, $\Theta_*$ is bounded if and only if $\Theta_t$ is bounded for every $t\in[a,b]$.
A typical  example  of a family of diffeomorphisms that yields a regular motion  is $\Phi_t(\theta)=\theta+t\mathbf{v}(\theta)$, where $\mathbf{v}\in W^{1,\infty}(\Theta_*)\cap C^1(\Theta_*)$, $t\in[0,1]$ and $\sup_{\theta\in\Theta_*} |\nabla\mathbf{v}(\theta)|<1.$

We next apply the above definition to positivity sets of probability densities.
In view of the next definition, we say that $f\in ACL([0,1]\times\Theta)$ (in short, that $f$ has the ACL property) if for every coordinate direction $\nu$ of $\R\times \rd$ and for $\mathcal L^d$-almost any line $\ell_\nu$ in the direction of $\nu$, $f$ is absolutely continuous on any closed segment contained in $\ell_\nu\cap([0,1]\times\Theta)$.  More details about the ACL property will be given in the next subsection. Furthermore, we will denote by 
$\mathds 1_A$ the indicator function of a set $A\subseteq\Theta$ (i.e., $\mathds 1_A(\theta)$ is equal to $1$ if $\theta\in A$ and it is equal to $0$ otherwise).

\begin{defi} \label{densityassumption}
Let $(t,\theta)\mapsto g_t(\theta)\in\mathbb R$
be a nonnegative $ L^1_{loc}((0,1)\times\Theta)$ function such that $\int_\Theta g_t(\theta)\,\ud\theta=1$ for a.e. $t\in(0,1)$. 
We say that it admits a regular extension if the following conditions are satisfied: 
\begin{itemize}  \item[\it i)] for a.e. $t$ in $(0,1)$, the positivity set   $\{\theta\in\Theta: g_t(\theta)>0\}$ coincides (up to a $\mathcal L^d$-null set)  with a nonempty open connected set    $\Theta_t=\Phi_t(\Theta_*)\subseteq\Theta$ with locally Lipschitz boundary,  where $[0,1]\times\Theta_*\ni (t,\theta)\mapsto\Phi_t(\theta)$ is a regular motion according to {\rm Definition \ref{motion}};
\item[\it ii)] there exists a  $W^{1,1}((0,1)\times\Theta)\cap ACL([0,1]\times\Theta)$ function $[0,1]\times\Theta\ni(t,\theta)\mapsto \tilde g_{t}(\theta)\in\R$ such that 
$$
g_t(\theta)=\tilde g(t,\theta)\,\mathds 1_{\Theta_t}(\theta)\qquad\mbox {for $(\mathcal L^1\otimes\mathcal L^d)$-a.e. $(t,\theta)\in(0,1)\times\Theta$}.
$$
\end{itemize} 
\end{defi}

As a consequence of the latter definition, we notice that $\partial_t\tilde g_t\in L^1(\Theta)$ for a.e. $t\in (0,1)$ and $g_t\in W^{1,1}(\Theta_t)$ for a.e. $t\in(0,1)$, with a $L^1_{loc}$ trace on $\partial \Theta_t$ thanks to the standard characterization \cite{G} of traces of $W^{1,1}$ functions,    see for instance \cite[Chapter 15]{L}. 

The next result provides an estimate of the Wasserstein distance in terms of weak solutions to Neumann boundary value problems (on time-dependent domain).  This is an important step towards the proof of Theorem \ref{concretemixed} which will be provided in Subsection \ref{proofs}.
 Indeed,
to a regular extension of a function $g\in L^1_{loc}((0,1)\times\Theta)$ according to Definition \ref{densityassumption}
 we associate the family (parametrized by $t$) of Neumann boundary value problems
\begin{equation}\label{neumanng0}\left\{\begin{array}{ll}
-\mathrm{div}(g_t\nabla u_t)=\partial_t \tilde g_t\quad &\mbox{ in }\Theta_{t}\\
g_t\nabla u_t\cdot \mathbf{n}_t=g_t\,{\partial_t\Phi_t}\circ \Phi_t^{-1}\cdot\mathbf{n}_t\quad &\mbox{ on }\partial\Theta_{t},\\
\end{array}\right.
\end{equation}
Here, for given  $t$,  $\mathbf{n}_t$  denotes the ($\mathcal{H}^{d-1}$- a.e. existing on $\partial\Theta_t$) outer unit normal to $\partial\Theta_t$.
For such moving domains, a natural calculus tool is the Reynolds transport formula (see Lemma \ref{reynolds} in the Appendix): notice that $\partial_t\Phi_t\circ\Phi_t^{-1}$ represents  the velocity of the boundary. It is possible that such velocity vanishes on some part of the boundary or on the whole boundary (in particular, if $\Phi_t$ does not depend on $t$, then the domain is fixed, i.e., $\Theta_t\equiv\Theta$ for any $t$). 
In fact, by means of  Definition \ref{densityassumption} we require two properties: a regularly moving domain and the existence of a global Sobolev extension. Such properties will ensure the applicability of the Reynolds transport formula from Lemma \ref{reynolds} (which also implies the standard compatibility condition for the Neumann problem \eqref{neumanng0}, thanks to the mass conservation property $\tfrac{\ud}{\ud t}\int_{\Theta_t} g_t(\theta)\,\ud\theta=0$).
 For given $t$, a weighted Sobolev space on $\Theta_t$ (with weight $g_t$ that is positive $\mathcal L^d$-a.e. on $\Theta_t$) is the natural framework for a notion of weak solution to  problem \ref{neumanng0}.
 Moreover, if $\Theta_t$ is bounded we complement  \ref{neumanng0} with the null mean condition $\int_{\Theta_t}u_t(\theta) g_t(\theta)\,\ud\theta=0$ (instead if $\Theta_t$ is unbounded we complement \eqref{neumanng0} with a vanishing condition at infinity). 
Therefore, if $\Theta_t$ is bounded we let $C^1_{g_t}(\overline \Theta_t)$ be the space of functions $\psi\in C^1(\overline \Theta_t)$ such that $\int_{\Theta_t}\psi(\theta)g_t(\theta)\,\ud\theta=0$, while if $\Theta_t$ is unbounded we just let $C^1_{g_t}(\overline\Theta_t):=C^1_c(\overline\Theta_t)$.  We give the following
 
\begin{defi}[\bf Weak solution]\label{soluzionedebole} 
Let $g$ satisfy all the conditions in {\rm Definition \ref{densityassumption}}. Fix $t\in (0,1)$ such that $g_t\in W^{1,1}(\Theta_t)$ and $\partial_t\tilde g_t\in L^1(\Theta)$. 
The weighted Sobolev space $H^1(\Theta_t, g_t)$ is then defined as the completion of $C^1_{g_t}(\overline{\Theta}_t)$ 
w.r.t. the norm 
$
\|\psi\|_{H^1(\Theta_t, g_t)} := \big( \int_{\Theta_t}|\nabla \psi(\theta)|^2\,g_t(\theta)\,\ud\theta {\big)}^{1/2}.
$ 
We say that $u_t\in  H^1(\Theta_t,g_t)$ is a weak solution to  problem \eqref{neumanng0} if for every  $\psi\in C^{1}_{g_t}(\overline\Theta_t)$ there holds
\[
\int_{\Theta_t}\nabla\psi(\theta)\cdot\nabla{u}_t(\theta) g_t(\theta)\,\ud\theta=\int_{\Theta_t} \psi(\theta)\partial_t\tilde g_t(\theta)\,\ud\theta+\int_{\partial\Theta_t}\psi(\sigma)g_t(\sigma) \partial_t\Phi_t(\Phi_t^{-1}(\sigma))\cdot \mathbf{n}_t(\sigma)\,\mathcal{H}^{d-1}(\ud\sigma).
\]
\end{defi}

\begin{theorem}\label{abstractmixed}
Let $g$ satisfy all the conditions in {\rm Definition \ref{densityassumption}}. For any $t\in[0,1]$, let $\mu_t:=\tilde g_t\,\mathcal L^d\mr\Theta_t$ and suppose that $\mu_0\in \mathcal P_2(\overline\Theta)$ and 
$\mu_1\in\mathcal P_2(\overline\Theta)$. Suppose that, for a.e. $t\in(0,1)$, $u_t\in H^1(\Theta_t,g_t)$
 is a weak solution to problem \eqref{neumanng0}, and that the map $(0,1)\ni t\mapsto \left(\int_{\Theta}|\nabla u_t(\theta)|^2\,g_t(\theta)\,\ud\theta\right)^{1/2}$ belongs to $L^1(0,1)$.
Then, there hold $\mu_t\in \mathcal P_2(\overline\Theta)$ for any $t\in(0,1)$,  $\nabla u_t\in\mathcal{TAN}_{\mu_t}(\mathcal{P}_2(\overline\Theta))$ for a.e. $t\in(0,1)$ and
\begin{equation}\label{bbg}
\mathcal W_2(\mu_{t_1},\mu_{t_2}) \le\int_{t_1}^{t_2}\left(\int_\Theta|\nabla u_t(\theta)|^2\,g_t(\theta)\,\ud\theta\right)^{\frac12}\,\ud t \qquad\mbox{for any $\;0\le t_1<t_2\le 1$}.
\end{equation}
\end{theorem}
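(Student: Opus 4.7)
The strategy is to apply Theorem \ref{abs} to the curve $t\mapsto\mu_t$, verifying its two hypotheses, and then to identify the tangent vector field that it produces with $\nabla u_t$. I fix a countable subset $\mathcal D\subset C^1_c(\overline\Theta)$ that is dense in the $C^1(\overline\Theta)$ norm. For each $\psi\in\mathcal D$, the Reynolds transport formula (Lemma \ref{reynolds}), applied with the regular motion $\Phi_t$ and the Sobolev extension $\tilde g$, yields for a.e.\ $t\in(0,1)$
\begin{equation}\label{reyn_plan}
\frac{\ud}{\ud t}\int_{\Theta_t}\psi\, g_t\,\ud\theta = \int_{\Theta_t}\psi\,\partial_t\tilde g_t\,\ud\theta + \int_{\partial\Theta_t}\psi\, g_t\,(\partial_t\Phi_t\circ\Phi_t^{-1})\cdot \mathbf n_t\,\ud\mathcal H^{d-1},
\end{equation}
with the right-hand side in $L^1(0,1)$ thanks to the $W^{1,1}$ regularity of $\tilde g$ in $t$ and the uniform bounds on $\Phi_t$ coming from Definition \ref{motion}. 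This proves the absolute continuity of $t\mapsto\int\psi\,\ud\mu_t$, which is hypothesis (i) of Theorem \ref{abs}.

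To verify hypothesis (ii), I observe that taking $\psi\equiv 1$ in \eqref{reyn_plan} and using mass conservation shows that the right-hand side is unchanged after the addition of a constant to $\psi$. Hence, for $\psi\in\mathrm{span}\,\mathcal D$, I may substitute $\psi$ with its mean-zero version $\bar\psi_t:=\psi-\int\psi\, g_t\,\ud\theta$, which lies in $C^1_{g_t}(\overline\Theta_t)$ when $\Theta_t$ is bounded (the unbounded case is reduced to this one by a cutoff/truncation argument, exploiting that $\mu_t\in\mathcal P_2$). The weak formulation in Definition \ref{soluzionedebole} applied to $\bar\psi_t$ then identifies the right-hand side of \eqref{reyn_plan} with $\int_{\Theta_t}\nabla\bar\psi_t\cdot\nabla u_t\,g_t\,\ud\theta=\int_{\Theta_t}\nabla\psi\cdot\nabla u_t\,g_t\,\ud\theta$, and the Cauchy-Schwarz inequality yields $\Psi(t)\le\bigl(\int_{\Theta_t}|\nabla u_t|^2\,g_t\,\ud\theta\bigr)^{1/2}$, which belongs to $L^1(0,1)$ by assumption.

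Theorem \ref{abs} now produces $\mu_t\in\mathcal P_2(\overline\Theta)$ for every $t\in(0,1)$, a unique tangent vector field $\mathbf w_t$ solving \eqref{mixed}, and the Benamou-Brenier estimate \eqref{BeBr}. To conclude \eqref{bbg}, I must identify $\mathbf w_t$ with $\nabla u_t$, the latter extended by zero outside $\Theta_t$ and read as an element of $L^2_{\mu_t}(\overline\Theta;\R^d)$. The computation above shows that $\nabla u_t$ verifies the variational identity \eqref{mixed}, so by the uniqueness part of Theorem \ref{abs} it suffices to establish the inclusion $\nabla u_t\in\mathcal{TAN}_{\mu_t}(\mathcal P_2(\overline\Theta))=\overline{\{\nabla\psi:\psi\in C^1_c(\overline\Theta)\}}^{L^2_{\mu_t}}$.

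This last inclusion is the main technical obstacle. Unwinding the definition of $H^1(\Theta_t,g_t)$, the task reduces to approximating any $\varphi\in C^1(\overline\Theta_t)$, in the weighted gradient norm $\|\nabla\cdot\|_{L^2_{g_t}(\Theta_t)}$, by restrictions to $\overline\Theta_t$ of $C^1_c(\overline\Theta)$ functions. Since $g_t$ vanishes outside $\Theta_t$, only values on $\Theta_t$ matter for the weighted norm, so this is a Whitney-type extension combined with a cutoff, made possible by the locally Lipschitz regularity of $\partial\Theta_t$ built into Definition \ref{densityassumption}; in the unbounded case, a further truncation away from $\partial\Theta_t$ is controlled by the tail smallness of $\mu_t\in\mathcal P_2$. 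Once this density argument is in place, uniqueness in Theorem \ref{abs} forces $\mathbf w_t=\nabla u_t$ for a.e. $t$, and \eqref{bbg} follows directly from \eqref{BeBr}.
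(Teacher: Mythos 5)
Your proposal is correct and follows essentially the same route as the paper's proof: Reynolds transport formula for hypothesis (i) of Theorem \ref{abs}, the mean-zero normalization $\overline\psi_t$ combined with the weak formulation and Cauchy--Schwarz for hypothesis (ii), and the observation that, since $g_t$ vanishes outside $\Theta_t$, extending test functions from $\overline\Theta_t$ to $\overline\Theta$ is an isometry for the weighted gradient norm, which yields $\nabla u_t\in\mathcal{TAN}_{\mu_t}(\mathcal P_2(\overline\Theta))$. The only difference is expository: you spell out the extension step (Whitney extension plus cutoff) that the paper dispatches in one sentence, and you phrase the identification of $\mathbf w_t$ with $\nabla u_t$ via the uniqueness clause of Theorem \ref{abs} rather than directly, which is logically equivalent.
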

\begin{proof} 
Let $\mathcal D$ be a countable dense subset of $C^1_c(\overline\Theta)$ (in the $C^1(\overline\Theta)$ norm).
By Lemma \ref{reynolds} in the Appendix, $\tilde g$ is such that 
$
[0,1]\ni t\mapsto \int_{\Theta_t}\psi(\theta)\,\tilde g_t(\theta)\,\ud\theta=\int_\Theta\psi(\theta)\,\mu_t(\ud\theta)
$ 
is absolutely continuous, and since $\mathcal D$ is countable, the null set $N\in(0,1)$ of its nondifferentiability points can be assumed to be independent on $\psi\in\mathrm{span}\mathcal D$ (in particular, assumption    {\it i)} of Theorem \ref{abs} is satisfied by the Borel family $\{\mu_t\}_{t\in[0,1]}\subset\mathcal P(\overline\Theta)$). Moreover, with the notation
\begin{equation*}\label{barra}\overline\psi_t(\cdot):=\left\{
\begin{array}{ll}\psi(\cdot)-\int_{\Theta}\psi(\theta) g_t(\theta)\,\ud\theta&\quad \mbox{if $\Theta_t$ is bounded}\\
\psi(\cdot)&\quad\mbox{otherwise}
\end{array}\right. 
\end{equation*}
we have $\overline\psi_t\in C^1_{g_t}(\overline\Theta_t)$ and, for any $t\in(0,1)\setminus N$ and any $\psi\in\mathrm{span}\mathcal D$, we have
\[
\frac{\ud}{\ud r}\int_{\Theta} \psi(\theta)\,\mu_t(\ud\theta){\Bigg{|}_{r=t}}= 
\frac{\ud}{\ud r}\int_{\Theta_r} \psi(\theta)\tilde g(r,\theta)\,\ud\theta{\Bigg{|}_{r=t}} = \frac{\ud}{\ud r}\int_{\Theta_r} \overline\psi_t(\theta)\tilde g(r,\theta)\,\ud\theta{\Bigg{|}_{r=t}}.
\]
Therefore, still by making use of Lemma \ref{reynolds}, we apply Reynolds transport formula  we get 
\[\begin{aligned}
\frac{\ud}{\ud r}\int_{\Theta_r} \psi(\theta)\tilde g_r(\theta)\,\ud\theta{\Bigg{|}_{r=t}}
&=\int_{\Theta_t} \overline\psi_t(\theta)\partial_t\tilde g(t,\theta)\,\ud\theta+ \int_{\partial\Theta_t}\overline\psi_t(\sigma)g_t(\sigma) \partial_t\Phi_t(\Phi_t^{-1}(\sigma))\cdot \mathbf{n}_t(\sigma)\,\mathcal{H}^{d-1}(\ud\sigma)\\&=\int_{\Theta_t}\nabla\overline\psi_t(\theta)\cdot\mathbf{w}_t(\theta) g_t(\theta)\,\ud\theta=\int_{\Theta_t}\nabla\psi(\theta)\cdot\mathbf{w}_t(\theta) g_t(\theta)\,\ud\theta
\end{aligned}\]
  for any $t\in(0,1)\setminus N$ and any $\psi\in\mathrm{span}\mathcal D$,
where we used the fact that  $u_t$ is a weak solution to \eqref{neumanng0} and the notation $\mathbf{w}_t:=\nabla u_t$. By assumption we have 
$u_t\in H^1(\Theta_t,g_t)$ for a.e. $t\in (0,1)$, thus 
$
\mathbf{w}_t\in \overline{\{\nabla\psi:\psi\in C^1_c({\overline\Theta_t})\}}^{L^2_{g_t}(\Theta_t;\rd)}.
$   
Since any $C^{1}_c(\overline{\Theta}_t)$ function can be extended to a function in  $C^1_c(\overline\Theta)$, by truncation and extension the spaces 
$
\overline{\{\nabla\psi:\psi\in C^1_c(\overline{\Theta}_t)\}}^{L^2_{g_t}(\Theta_t;\rd)}$ and $\overline{\{\nabla\psi:\psi\in C^1_c(\overline\Theta)\}}^{L^2_{g_t}(\Theta;\rd)}
$
are isometric. 
Thus, for a.e. $t\in(0,1)$, $\mathbf{w}_t$ is the unique solution in 
$\overline{\{\nabla\psi:\psi\in C^1_c(\overline\Theta)\}}^{L^2_{g_t}(\Theta;\rd)}
$
  to problem \eqref{mixed} and by Riesz isomorphism it satisfies $\|\mathbf{w}_t\|_{L^2_{g_t}(\Theta;\rd)}=\Psi(t)$ for a.e. $t\in (0,1)$, where $\Psi$ is defined in Theorem \ref{abs}. Hence, condition {\it ii)} of Theorem \ref{abs} is also satisfied. Therefore,  
\eqref{bbg} follows along with  $\mathbf{w}_t\in\mathcal{TAN}_{\mu_t}(\mathcal{P}_2(\overline\Theta))$ for a.e. $t\in(0,1)$.
\end{proof}

\subsection{Basic estimates of $\ud_{TV}$, $\mathcal W_1$ and $\mathcal W_2$: proof of Theorem \ref{th:0} }\label{proofs}

In this subsection, we provide the proofs of the basic results on a finite dimensional sample space under the validity of Assumptions  \ref{ass:1}. 
Let us introduce some further notation. For $\nu\in\mathbb S^{m-1}$, let $ P_\nu$ be the projection operator onto $\{\nu\}^\perp:=\{z\in\mathbb{R}^m : z\cdot\nu=0\}$ 
and let 
$\X_\nu:= P_\nu(\X)$. We notice that $\X_\nu$ inherits the convexity of $\X$. We introduce the line segment $I_{\xi,\nu}:=\{\xi+t\nu:t\in\R\}\cap\X$, and cleary $I_{\xi,\nu}\neq\emptyset$ if $\xi\in\X_\nu$.
The following is the standard ACL characterization of Sobolev functions (see for instance \cite{bogachev,EG,H,L}):  $G\in W^{1,1}_{loc}(\X)$ if and only if it admits a representative such that, for any coordinate direction $\nu$ in $\reals^m$,  the restriction  to $I_{\xi,\nu}$ is locally absolutely continuous for $\mathcal L^{m-1}$-a.e. $\xi\in\X_\nu$. In such case, the ACL property holds with respect to any direction, and it can be rephrased as follows. Given $\nu\in\mathbb S^{m-1}$, for $\mathcal L^{m-1}$-a.e. $\xi\in\X_\nu$, the $L^1(0,1)$ map
$t\mapsto G(\mathbf s_{x_1,x_2}(t))$ is (up to having modified  $G$ on a $\mathcal  L^m$-null set) absolutely continuous on $(0,1)$, where $x,y$ are any two distinct points of $I_{\xi,\nu}$, and 
\begin{equation}\label{gradientformula}\begin{aligned}
\frac{\ud}{\ud t}\left( G(\mathbf s_{x_1,x_2}(t))\right) &= |x_1-x_2|\, \partial_{\nu} G(\mathbf s_{x_1,x_2}(t)) \\&= \nabla G(\mathbf s_{x_1,x_2}(t)) \cdot (x_2-x_1)\quad\mbox{for $\mathcal{L}^1$-a.e. $t \in (0,1)$.}
\end{aligned}\end{equation}
The weak $\nu$-directional derivative of $G$ coincides with the pointwise $\mathcal L^m$-a.e. classical $\nu$-directional derivative.  
Before the proof of the main theorems, we state the following simple lemma.
\begin{lem}\label{Ggrande}
Let ${\mathbb Y}\subset\reals^m$ be open.  Let  $\psi\in L^\infty_\pi(\Theta)$. Let $g\in L^1_{\mathcal L^m\otimes \pi}({\mathbb Y}\times\Theta)$. If $\int_\Theta \|g(\cdot, \theta)\|_{W^{1,1}({\mathbb Y})}\, 
\pi(\ud\theta) < +\infty$, then  
\begin{equation}\label{Gipsy}
G_{\psi}(\cdot) := \int_\Theta \psi(\theta) g(\cdot, \theta) \,\pi(\ud\theta)
\end{equation} 
belongs to $W^{1,1}({\mathbb Y})$ and 
\begin{equation} \label{gradx}
\nabla_x G_{\psi}(x) = \int_\Theta \psi(\theta) \, \nabla_x  g(x, \theta) \, \pi(\ud\theta)\qquad \mbox{ for $\mathcal{L}^m$-a.e. $x \in {\mathbb Y}$. }
\end{equation}
\end{lem}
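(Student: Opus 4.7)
The plan is to establish the lemma as a straightforward instance of differentiation under the integral sign in the Sobolev sense. The hypothesis $\int_\Theta\|g(\cdot,\theta)\|_{W^{1,1}(\mathbb{Y})}\,\pi(\ud\theta)<+\infty$ in particular means that $g(\cdot,\theta)\in W^{1,1}(\mathbb{Y})$ for $\pi$-a.e. $\theta\in\Theta$, so that $\nabla_x g(x,\theta)$ is well defined for $\pi$-a.e. $\theta$ and $\mathcal{L}^m$-a.e. $x\in\mathbb{Y}$; moreover $(x,\theta)\mapsto \nabla_x g(x,\theta)$ can be chosen jointly measurable (this follows by approximating $g(\cdot,\theta)$ by mollification in the $x$ variable for each $\theta$). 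Thus the right-hand side of \eqref{gradx} defines pointwise $\mathcal{L}^m$-a.e. a Borel map on $\mathbb{Y}$.

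First I would check the two integrability facts. Since $\psi\in L^\infty_\pi(\Theta)$ and $g\in L^1_{\mathcal{L}^m\otimes\pi}(\mathbb{Y}\times\Theta)$, Fubini yields $G_\psi\in L^1(\mathbb{Y})$, together with the estimate $\|G_\psi\|_{L^1(\mathbb{Y})}\le \|\psi\|_{L^\infty_\pi}\,\|g\|_{L^1_{\mathcal{L}^m\otimes\pi}}$. Setting
\[
H(x):=\int_\Theta \psi(\theta)\,\nabla_x g(x,\theta)\,\pi(\ud\theta),
\]
Fubini and the hypothesis again give $H\in L^1(\mathbb{Y};\R^m)$ with
\[
\|H\|_{L^1(\mathbb{Y};\R^m)}\le \|\psi\|_{L^\infty_\pi}\int_\Theta \|\nabla_x g(\cdot,\theta)\|_{L^1(\mathbb{Y};\R^m)}\,\pi(\ud\theta)<+\infty.
\]

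It then remains to verify that $H$ is the distributional gradient of $G_\psi$. Fix $\varphi\in C^\infty_c(\mathbb{Y})$ and $i\in\{1,\dots,m\}$. Using Fubini (justified by the bound $|\psi(\theta)\,g(x,\theta)\,\partial_{x_i}\varphi(x)|\le \|\psi\|_{L^\infty_\pi}\|\partial_{x_i}\varphi\|_\infty\,|g(x,\theta)|$ which is integrable against $\mathcal{L}^m\otimes\pi$) one writes
\[
\int_{\mathbb{Y}} G_\psi(x)\,\partial_{x_i}\varphi(x)\,\ud x=\int_\Theta \psi(\theta)\left(\int_{\mathbb{Y}} g(x,\theta)\,\partial_{x_i}\varphi(x)\,\ud x\right)\pi(\ud\theta).
\]
For $\pi$-a.e. $\theta$, the inner integral equals $-\int_{\mathbb{Y}} \partial_{x_i} g(x,\theta)\,\varphi(x)\,\ud x$ by definition of the weak derivative of $g(\cdot,\theta)\in W^{1,1}(\mathbb{Y})$. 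Applying Fubini once more (justified by the bound $|\psi(\theta)\,\partial_{x_i} g(x,\theta)\,\varphi(x)|\le \|\psi\|_{L^\infty_\pi}\|\varphi\|_\infty\,|\nabla_x g(x,\theta)|$ together with the $W^{1,1}$-integrability hypothesis) then yields
\[
\int_{\mathbb{Y}} G_\psi(x)\,\partial_{x_i}\varphi(x)\,\ud x=-\int_{\mathbb{Y}}\varphi(x)\,H_i(x)\,\ud x,
\]
so that $\partial_{x_i} G_\psi=H_i$ in the sense of distributions on $\mathbb{Y}$. Since $H\in L^1(\mathbb{Y};\R^m)$, this identifies $G_\psi$ as an element of $W^{1,1}(\mathbb{Y})$ and establishes \eqref{gradx}.

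The only delicate point is the double application of Fubini, but both invocations are straightforward given the two integrability bounds above; the rest is just bookkeeping. There is no genuine obstacle—essentially the lemma is the Sobolev version of the classical fact that one may differentiate under the integral sign when the integrand has a dominating integrable majorant for its derivative.
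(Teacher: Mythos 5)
Your proof is correct and follows essentially the same route as the paper's: Fubini to exchange the order of integration, integration by parts via the weak derivative of $g(\cdot,\theta)$ for $\pi$-a.e.\ $\theta$, Fubini again, and the $L^1$ bound on $\int_\Theta\psi\,\nabla_x g\,\pi(\ud\theta)$ coming from the $W^{1,1}$ hypothesis. Your extra care about the joint measurability of $(x,\theta)\mapsto\nabla_x g(x,\theta)$ and the explicit majorants for the two Fubini applications are details the paper leaves implicit, but the argument is the same.
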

\begin{proof}
By the assumptions, $g\in L^1_{\mathcal L^m\otimes \pi}({\mathbb Y}\times\Theta)$, $\nabla_x g \in L^1_{\mathcal L^m\otimes\pi}({\mathbb Y}\times\Theta)$ 
and for $\pi$-a.e. $\theta\in\Theta$  the mapping $x\mapsto g(x,\theta)$ belongs to $W^{1,1}(\mathbb Y)$. We apply Fubini's theorem to get 
\[\begin{aligned}
\int_{{\mathbb Y}}G_\psi(x)\nabla_x \zeta(x)\,\ud x &= \int_\Theta \psi(\theta)\left(\int_{{\mathbb Y}} g(x,\theta)\nabla_x\zeta(x)\,\ud x\right)\,\pi(\ud\theta)\\&= -\int_\Theta\psi(\theta)\left(\int_{{\mathbb Y}}\nabla_x g(x,\theta)\zeta(x)\,\ud x\right)\,\pi(\ud\theta) \\
&=-\int_{{\mathbb Y}} \zeta(x)\left(\int_\Theta \psi(\theta)\nabla_x g(x,\theta)\,\pi(\ud\theta)\right)\,\ud x
\end{aligned}\]
for any $\zeta\in C^{\infty}_c({\mathbb Y})$ and
$$
\left|\int_{{\mathbb Y}}\left(\int_\Theta\psi(\theta)\nabla_x g(x,\theta)\,\pi(\ud\theta)\right)\,\ud x\right|\le \|\psi\|_{L^\infty_\pi(\Theta)}\int_\Theta\|g(\cdot,\theta)\|_{W^{1,1}({\mathbb Y})}\,\pi(\ud\theta)<+\infty.
$$
Therefore, the right-hand-side in \eqref{gradx}  belongs to $L^1({\mathbb Y})$ and it is the weak gradient of $G_\psi$.
 \end{proof}

We proceed to the proof of the  main results. We start with the most direct proof concerning the estimate in total variation distance.   We also refer to   \cite{DM} for further results involving the total variation distance.
We recall the dual formulation of the total variation distance. For $\mu,\nu\in\mathcal{P}(\overline\Theta)$ there holds
\begin{equation*}\label{tvdual}
\ud_{TV}(\mu,\nu)=
\frac12 \sup_{{\psi\in C_c(\overline\Theta)}\atop{|\psi|\le 1}}\left(\int_{\overline\Theta}\psi(\theta)\,\mu(\ud\theta)-\int_{\overline\Theta}\psi(\theta)\,\nu(\ud\theta)\right),
\end{equation*}
where $C_c(\overline\Theta)$ is the set of continuous functions on $\overline\Theta$ having compact support contained in $\overline\Theta$.
We notice that due to the separability of $C_c(\overline\Theta)$ it is possible to compute the above supremum on a countable dense subset (w.r.t. the sup norm).

 \begin{proofad}
We first claim that for any bounded continuous function $\psi$ on $\overline\Theta$, the function $G_\psi$ from \eqref{Gipsy}, which is in $W^{1,1}_{loc}(\mathbb X)$ by Lemma \ref{Ggrande},
belongs to $W^{1,\infty}({\mathbb{X}})$. Indeed, since $\pi(\Theta|x)=\int_\Theta g(x,\theta)\,\pi(\ud\theta)=1$ for $\mathcal L^{m}$-a.e. $x\in\X$,  
it is clear that $|G_\psi(x)|\le \sup_{\overline\Theta}|\psi|$ for $\mathcal L^{m}$-a.e. $x\in\X$.
By Lemma \ref{Ggrande} and by assumption, we get 
\begin{equation*}\label{pointzero}
|\nabla G_\psi(x)|\le \sup_{\overline\Theta}|\psi|\int_\Theta |\nabla_x g(x,\theta)|\,\pi(\ud\theta)=K\,\sup_{\overline\Theta}|\psi|
\end{equation*}
for $\mathcal L^{m}$-a.e. $x\in{\X}$. The claim is proved. In particular, for any $\psi\in C_c(\overline\Theta)$, $G_\psi$ has a Lipschitz representative on $\X$.

Let $\mathcal D$ denote a countable dense subset (in the sup norm) of $\{\psi\in C_c(\overline\Theta): |\psi|\le 1\,\mbox{on }\overline\Theta\}$. 
Let $\hat g$ be a representative (according to the $(\mathcal L^m\otimes\pi)$-a.e. identification) of $g$ such that $\int_\Theta \hat g(x,\theta)\,\pi(\ud\theta)=1$ for every $x\in\X$. Therefore, $\hat\pi(\ud\theta|x):=
\hat g(x,\theta)\,\pi(\ud\theta)$ is a representative of the kernel defined by \eqref{kernelgpi}, and $\hat G_\psi(x):=\int_\Theta\psi(\theta)\hat g(x,\theta)\,\pi(\ud\theta)$ is a representative of $G_\psi$ for any $\psi\in\mathcal D$. Moreover, for $\psi\in\mathcal D$, $\hat G_\psi $ agrees $\mathcal L^m$-a.e. with a Lipschitz function on $\X$, i.e., there exists a $\mathcal L^m$-null set $\Z_\psi\subset\X $ such that 
$|\hat G_\psi(x_2)-\hat G_\psi(x_1)|\le K|x_2-x_1|$ for any $x_1,x_2\in\X\setminus\Z_\psi$.
Since $\mathcal D$ is countable, there exists a $\mathcal L^m$-null set $\Z\subset\X$ such that  for every $\psi\in\mathcal D$ the restriction of $\hat G_\psi$ to $\X\setminus\Z$ is Lipschitz 
(with Lipschitz constant bounded by $K$).
Therefore
\[
\begin{aligned}
\ud_{TV}(\hat\pi(\cdot|x_2),\hat\pi(\cdot|x_1)) 
&=\frac12 \sup_{\psi\in \mathcal D }
(\hat G_\psi(x_2)-\hat G_\psi(x_1))\le \frac12 \sup_{\psi\in \mathcal D}  \|\nabla G_\psi\|_{L^\infty(\X)}\,|x_2-x_1|\\&	\le \frac K2  |x_2-x_1|
\end{aligned}
\] 
for any $x_1,x_2\in\X\setminus\Z$. Note that $\hat\pi(\cdot|x)\in\mathcal{P}(\Theta)$ for any $x\in\X\setminus\Z$. Since $\X\setminus\Z$ is dense in $\X$ and since $(\mathcal{P}(\Theta), d_{TV})$ is a complete metric space, the mapping $\X\setminus\Z\ni x\mapsto\hat \pi(\cdot|x)\in\mathcal{P}(\Theta)$ admits a unique Lipschitz continuous extension (with respect to the total variation distance) to the whole of $\X$ with the same 
Lipschitz constant $K/2$.
\end{proofad}

For the proof of Theorem \ref{th:0}-{\bf (ii)},
we take advantage of the Kantorovich-Rubinstein dual fomulation of the $1$-Wasserstein distance. See \cite[Section 1.2]{vilMass}.
For any $\mu,\nu\in\mathcal{P}_1(\overline\Theta)$, there holds
\begin{equation*}\label{dualw1}
\mathcal W_1(\mu,\nu)=
\sup_{{\psi\in C^1_c(\overline\Theta)}\atop{\mathrm{Lip}(\psi)\le 1}}\left(\int_{\overline\Theta}\psi(\theta)\,\mu(\ud\theta)-\int_{\overline\Theta}\psi(\theta)\,\nu(\ud\theta)\right).
\end{equation*}
Again, the separability of $C_c^1(\overline\Theta)$ allows to take the above supremum on a countable dense set (in the sup norm).

\begin{proofadw1}
We claim that the function $G_\psi$ from \eqref{Gipsy} belongs to $W^{1,\infty}({\mathbb{X}})$ for any $\psi\in C^1_c(\overline\Theta)$. 
Indeed, as seen in the proof of Theorem \ref{th:0}-{\bf (i)}, we have $|G_\psi(x)|\le \sup_{\overline\Theta}|\psi|$ for $\mathcal{L}^{m}$-a.e. $x\in\X$.
Moreover, by Lemma \ref{Ggrande}, by H\"older inequality and by the Poincar\'e inequality \eqref{wirtinger}, since $\nabla G_\psi=\nabla G_{\psi-a}$ for any $a\in\R$, we get 
\begin{equation*}\label{extraw1}
\begin{aligned}
|\nabla G_\psi(x)|&=\inf_{a\in\R}\left|\int_\Theta(\psi(\theta)-a)\,\nabla_x g(x,\theta)\,\pi(\ud\theta)\right|\\&\le\inf_{a\in\R}\left(\int_\Theta |\psi(\theta)-a|^q \,\pi(\ud\theta)\right)^{\!\!\frac1q}\left(\int_\Theta{|\nabla_x g(x,\theta)|^p}\,\pi(\ud\theta)\right)^{\!\!\frac1p}\\
&\le \mathcal C_q[\pi] \left(\int_\Theta |\nabla\psi(\theta)|^q \,\pi(\ud\theta)\right)^{\frac1q} \left(\int_\Theta{|\nabla_x g(x,\theta)|^p}\,\pi(\ud\theta)\right)^{\!\!\frac1p}\le K\,\mathrm{Lip}(\psi)
\end{aligned}
\end{equation*}
for $\mathcal L^{m}$-a.e. $x\in\X$, thus proving the claim.

Let $\mathcal D$ be a countable dense subset (in  the sup norm) of $\{\psi\in C^1_c(\overline\Theta): \mathrm{Lip}(\psi)\le 1\}$.  
By the same argument as in the proof of Theorem \ref{th:0}-{\bf (i)}, we obtain a $\mathcal L^m$-null set $\Z$ in $\X$ and a representative (still denoted by $\pi(\cdot|\cdot)$) of the kernel defined by 	
\eqref{kernelgpi} such that, for any $x_1,x_2\in\X\setminus\Z$
\[
\begin{aligned}
\Wuno(\pi(\cdot|x_1),\pi(\cdot|x_2))
\le \sup_{\psi\in \mathcal D} \|\nabla G_\psi\|_{L^\infty(\X)}\,|x_2-x_1| \le K|x_2-x_1|.
\end{aligned}
\]
Since $\X\setminus\Z$ is dense in $\X$ and since $(\mathcal{P}_1(\overline\Theta),\Wuno)$ is complete, there exists a unique map 
$\X\ni x\mapsto \pi^*(\cdot|x)\in\mathcal{P}_1(\overline\Theta)$ that satisfies the above Lipschitz estimate on the whole of $\X$, with the same Lipscthitz constant $K$, and such that $\pi^*(\cdot|x)\equiv\pi(\cdot|x)$ for any $x\in\X\setminus\Z$.  Since the assumptions of Theorem \ref{th:0} are also satisfied, $x\mapsto \pi^*(\cdot|x)$ is also continuous with respect to the total variation distance, therefore $\pi^*(\overline\Theta|x)=\pi^*(\Theta|x)=1$ for any $x\in\X$. 
\end{proofadw1}
\begin{proofad0}
Once more, we start by claiming that, for any  $\psi\in C^1_c(\overline\Theta)$, the function
$G_\psi$ from \eqref{Gipsy} belongs to $W^{1,\infty}(\X)$. Indeed, we have as usual $\|G_\psi\|_{L^\infty(\X)}\le\sup_{\overline\Theta}|\psi|$
and again by Cauchy-Schwarz inequality and by \eqref{wirtinger}, by the positivity of $g$ and by assumption, we get
\[\begin{aligned}
|\nabla_x G_\psi(x)| &=\inf_{a\in\R} \left|\int_\Theta(\psi(\theta)-a)\,\nabla_x g(x,\theta)\,\pi(\ud\theta)\right|
\\&\le\inf_{a\in\R}\left(\int_\Theta |\psi(\theta)-a|^2 g(x,\theta)\,\pi(\ud\theta)\right)^{\frac12}\left(\int_\Theta\frac{|\nabla_x g(x,\theta)|^2}{g(x,\theta)}\,\pi(\ud\theta)\right)^{\frac12} \nonumber \\
&\le \mathcal C[g(x,\cdot)\pi] \left(\int_\Theta |\nabla\psi(\theta)|^2 g(x,\theta)\,\pi(\ud\theta)\right)^{\frac12}\mathcal J_{\pi}[g(x,\cdot)]\\&\le K \left(\int_\Theta |\nabla\psi(\theta)|^2 g(x,\theta)\,\pi(\ud\theta)\right)^{\frac12}\le K\mathrm{Lip}(\psi)\label{gradzetagii}
\end{aligned}\]
for $\mathcal L^{m}$-a.e. $x\in\X$, thus proving  the claim. As seen in the proof of Theorem \ref{th:0}-{\bf (ii)}, this shows that there exists a $\Wuno$-Lipschitz map $\X\ni x\mapsto\pi^*(\cdot|x)\in\mathcal{P}_1(\overline\Theta)$, with Lipschitz constant $K$, which is a version of the  kernel defined by \eqref{kernelgpi}.   
We are left to check that $\pi^*(\cdot|x)$ is Lipschitz with respect to $\Wdue$ as well.  Note that by assumption the second moment of $\pi^*(\cdot|x)$ is finite for $\mathcal L^m$-a.e. $x\in \X$.

We first notice that $G_\psi^*(x):=\int_{\overline\Theta}\psi(\theta)\,\pi^*(\ud\theta|x)$ is the Lipschitz-continuous representative of $G_\psi$, for any  $\psi\in C^1_c(\overline\Theta)$. Indeed, the $\Wuno$-Lipschitz 
estimate entails 
\[
\begin{aligned}
|G^*_\psi(x_1)-G_\psi^*(x_2)|&=\left|\int_{\overline\Theta}\psi(\theta)\,\pi^*(\ud\theta|x_1)-\int_{\overline\Theta}\psi(\theta)\,\pi^*(\ud\theta|x_2)\right|\\&\le\int_{\overline\Theta\times\overline\Theta}|\psi(\theta_1)-\psi(\theta_2)|\,\eta_{x_1,x_2}(\ud\theta_1\ud\theta_2)\\
&\le \mathrm{Lip}(\psi)\,\Wuno(\pi^*(\cdot|x_1),\pi^*(\cdot|x_2)))\le K\, \mathrm{Lip}(\psi)\,|x_1-x_2|
\end{aligned}
\]
for any $x_1,x_2\in\X$, where $\eta_{x_1,x_2}\in\mathcal{P}(\overline\Theta\times\overline\Theta)$ is an optimal coupling between $\pi^*(\cdot|x_1) $ and 
$\pi^*(\cdot|x_2)$ for the $1$-Wasserstein distance.
In particular for any $x_1,x_2\in\X$, the map $[0,1]\ni t\mapsto G^*_\psi(\mathbf s_{x_1,x_2}(t))$ is absolutely continuous for any $\psi\in C^1_c(\overline\Theta)$, so that assumption {\it i)} of Theorem \ref{abs} is satisfied by the narrowly continuous curve $[0,1]\ni t\mapsto\pi^*(\cdot|\mathbf s_{x_1,x_2}(t))\in\mathcal{P}(\overline\Theta)$. 

Let $\mathcal D$ be a countable dense subset of $C^1_c(\overline\Theta)$ (in the $C^1(\overline{\Theta})$ norm). Let $\nu\in \mathbb S^{m-1}$.
We take advantage of the fact that any $\mathcal L^m$-null subset $\Z$ of $\X$ has the following property (by Fubini theorem): for $\mathcal L^{m-1}$-a.e. $\xi\in\X_\nu$ there holds
$\mathcal L^1(\Z\cap I_{\xi,\nu})=0$, and the $\mathcal L^{m-1}$-null set of $\xi$'s where this property fails can be taken to be independent of $\psi\in\mathcal D$, since $\mathcal D$ is countable. Therefore, thanks to \eqref{gradzetagii},  for $\mathcal L^{m-1}$-a.e. $\xi\in\mathbb X_\nu$ and any $x_1,x_2\in I_{\xi,\nu}$, we have the following:
\[
\begin{aligned}
|\nabla_x G^*_\psi(\mathbf s_{x_1,x_2}(t))|&=\inf_{a\in\R} \left|\int_\Theta(\psi(\theta)-a)\,\nabla_x g(\mathbf s_{x_1,x_2}(t),\theta)\,\pi(\ud\theta)\right|\\&\le K \left(\int_\Theta |\nabla\psi(\theta)|^2 g(\mathbf s_{x_1,x_2}(t),\theta)\,\pi(\ud\theta)\right)^{\frac12}
\end{aligned}
\]
for a.e. $t\in(0,1)\setminus N$ and any $\psi\in\mathcal D$, where $N$ is a null set which is again independent of $\psi\in \mathcal D$. Moreover, for any $t\in(0,1)\setminus N$ the latter inequality also holds for any 
$\psi\in\mathrm{span}\mathcal D$, due to the linearity of $\psi\mapsto G^*_\psi(x)$.

As a consequence, for $\mathcal L^{m-1}$-a.e. $\xi\in\X_\nu$ and any $x_1,x_2\in I_{\xi,\nu}$, we have 
 \begin{equation*}\begin{aligned}
\frac{\ud}{\ud r}\int_{\overline\Theta}\psi(\theta)\,\pi^*(\ud\theta|\mathbf s_{x_1,x_2}(r)){\bigg|}_{r=t}&=\frac{\ud}{\ud r}(G^*_\psi(\mathbf s_{x_1,x_2}(r))){\bigg|}_{r=t} \le |x_1-x_2| \cdot |\nabla_x G^*_\psi(\mathbf s_{x_1,x_2}(t))|\\
&\le K|x_1-x_2| \left(\int_\Theta |\nabla\psi(\theta)|^2 g(x,\theta)\,\pi(\ud\theta)\right)^{\frac12}
\end{aligned}
\end{equation*}
for any $t\in(0,1)\setminus N$ and any $\psi\in \mathrm{span}\mathcal D$. Whence,
\[
\begin{aligned}
\Psi_{x_1,x_2}(t) &:= \sup_{\psi \in\mathrm{span} \mathcal D}\left\{\frac{\ud}{\ud r}\int_{\overline\Theta}\psi(\theta)\,\pi^*(\ud\theta|{\mathbf s_{x_1,x_2}(r)}){\bigg|}_{r=t}: \int_{\overline{\Theta}}|\nabla\psi(\theta)|^2\,
\pi^*(\ud\theta|g_{\mathbf s_{x_1,x_2}(t)} )\le1\right\}\\&\le K|x_1-x_2|
\end{aligned}
\]
for a.e. $t\in(0,1)$. Here, $(0,1)\ni t\mapsto\Psi_{x_1,x_2}(t)$ is measurable, being the supremum of the linear span of countably many measurable functions. Moreover, we deduce
from the latter estimate that $\int_0^1\Psi_{x,y}(t)\,dt\le K |x-y|$ for $\mathcal L^{m-1}$-a.e. $\xi\in\X_\nu$ and any $x_1,x_2\in I_{\xi,\nu}$.
In particular, for $\mathcal L^{m-1}$-a.e. $\xi\in\mathbb X_\nu$ and any $x_1,x_2\in I_{\xi,\nu}$, assumption {\it ii)} of Theorem \ref{abs} is satisfied by the curve $[0,1]\ni t\mapsto\pi^*(\cdot|\mathbf s_{x_1,x_2}(t))\in\mathcal{P}(\overline\Theta)$, and
we also notice that (since $\pi^*(\cdot|x)\in\mathcal{P}_2(\overline\Theta)$ for $\mathcal L^m$-a.e. $x\in\X$) we have $\pi^*(\cdot|\mathbf s_{x_1,x_2}(t))\in\mathcal{P}_2(\overline\Theta)$ for a.e. $t\in(0,1)$ up to another 
$\mathcal L^{m-1}$-null set of $\xi$'s in $\X_\nu$.
Therefore, by applying Theorem \ref{abs},  for  $\mathcal{L}^{m-1}$-a.e. $\xi\in\mathbb X_\nu$ and any $x_1,x_2\in I_{\xi,\nu}$ we get that both 
$\pi^*(\cdot|x_1)$ and $\pi^*(\cdot|x_2)$ belong to $\mathcal P_2(\overline\Theta)$ and that the bound in \eqref{main_problem} is fulfilled for such $x_1, x_2$, with the $\Wdue$ distance and with $L=K$.
By the arbitrariness of $\nu$ and by the narrow continuity of $\X\ni x\mapsto \pi^*(\cdot|x)\in \mathcal{P}(\overline\Theta)$,
the $\mathcal W_2$-Lipschitz estimate extends to any $x_1,x_2\in\X$.
Indeed, given generic $x_1,x_2\in\X$ with $x_1 \neq x_2$, letting $\nu:=\tfrac{x_2-x_1}{|x_2-x_1|}$, it is enough to take sequences $x_{1,n}\to x_1$ and $x_{2,n} \to x_2$ such that, for every $n\in\naturals$, 
$\tfrac{x_{2,n}-x_{1,n}}{|x_{2,n}-x_{1,n}|}=\nu$ and such that  \eqref{main_problem} applies for any couple of points on the line $I_{ P_{\nu}(x_{2,n}),\nu}$. Then, \eqref{main_problem} applies for the couple $x_{1,n}, x_{2,n}$, for any $n$, and it passes to the limit by the narrow lower semicontinuity of $\Wdue$, according to \cite[Proposition 7.13]{AGS}. 
\end{proofad0}
\begin{proofadS}
The proof is very similar to the previous ones.
We first show that $G_\psi$ from \eqref{Gipsy} belongs to $W^{1,\infty}(\mathbb X)$ for any  $\psi\in C^1_c(\overline\Theta)$. It belongs indeed to $W^{1,1}_{loc}(\mathbb X)$ by Lemma \ref{Ggrande}, and to $L^\infty(\mathbb X)$ with the same argument  of the proof of Theorem \ref{th:0}-{\bf (i)}.
Moreover, combining Lemma \ref{Ggrande}, the Sobolev inequality \eqref{sobineq} with critical exponent $p^*=\frac {dp}{d-p}$ ($p^*=+\infty$ if $p=d=1$), and H\"older's inequality, we get
\begin{equation}\label{inline}
\begin{aligned}
|\nabla_x G_\psi(x)|&\le\inf_{a\in\R}\int_\Theta|\psi(\theta)-a|\,|\nabla_x g(x,\theta)|\,\ud\theta\le\inf_{a\in\R}\|\psi-a\|_{L^{p^*}(\Theta)}\left\|\nabla_x g(x,\cdot)\right\|_{L^{\frac {p^*}{p^*-1}}(\Theta)}\\
&\le\mathcal  S_{p}(\Theta)\left(\int_\Theta|\nabla\psi(\theta)|^2\,g(x,\theta)\,\ud\theta\right)^{\frac12}\left\|\frac{1}{g(x,\cdot)}\right\|_{L^{\frac{p}{2-p}}(\Theta)}^{\frac12}
\left\|\nabla_x {g(x,\cdot)}\right\|_{L^{\frac {p^*}{p^*-1}}(\Theta)}
\end{aligned}
\end{equation}
for $\mathcal L^{m}$-a.e. $x\in\X$.
By assumption and by  \eqref{inline} we conclude that $\|\nabla_x G_\psi\|_{L^\infty({\X})}\le K\mathrm{Lip}(\psi)$. 
As seen in the proof of Theorem \ref{th:0}-{\bf (ii)}, it follows that the probability kernel $\pi(\cdot|\cdot)$ defined by \eqref{kernelgpi} admits a $\mathcal W_1$-Lipschitz continuous version $\X\ni x\mapsto \pi^*(\cdot|x)\in\mathcal{P}(\overline\Theta) $.  With the same argument of the proof of Theorem \ref{th:0}-{\bf (iii)}, 
the proof concludes by showing that $x\mapsto\pi^*(\cdot|x)\in\mathcal{P}_2(\overline\Theta)$ is also $\Wdue$-Lipschitz-continuous, with Lipschitz constant not exceeding $K$.
\end{proofadS}

\subsection{Moving domains: proof of {\rm Theorem \ref{concretemixed}} and of {\rm Theorem \ref{maintrace}}}
\label{sect:movingproof}

We deal with solutions to nonhomogeneous Neumann boundary value problems, following the line of Theorem \ref{abstractmixed}. 
Given a propability kernel $\pi(\cdot|\cdot)$, the curve $[0,1]\ni t\mapsto\pi(\cdot|{\mathbf s_{x_1,x_2}(t)})\in\mathcal{P}(\overline\Theta)$ depends on the two parameters $x_1,x_2$. Accordingly, we specify the notion of regular motion, which is essentially the same as Definition \ref{motion}. 
\begin{defi}[\bf $\X$-regular motion]\label{xmotion}
Let $\Theta_*\subseteq\rd$ and $\Theta_x\subseteq\Theta$ be nonempty open connected sets with locally Lipschitz boundary, for any $x\in\X$. We say that  a smooth mapping $\X\times\Theta_\ast\ni(x,\theta)\mapsto\Phi_x(\theta)$ is a $\X$-regular motion if $[0,1]\times\Theta_\ast\ni(t,\theta)\mapsto\Phi_{\mathbf s_{x_1,x_2}(t)}(\theta)$ is regular motion according to {\rm Definition \ref{motion}} for any $x_1,x_2\in\X$ and 
$\Theta_x=\Phi_x(\Theta_\ast)$ for any $x\in\X$. In such assumptions, we further define for any $x\in\X$ and any $\nu\in\mathbb S^{m-1}$ the function $\mathbf V_x^\nu:\Theta_x\to\rd$ (resp. $\mathbf V_x :\Theta_x\to\R^{d\times d}$) by $\mathbf V_x^\nu:=\partial_\nu\Phi_x\circ\Phi_x^{-1}$ (resp. $\mathbf V_x:=\nabla_x\Phi_x\circ\Phi_x^{-1}$).
\end{defi}

\begin{defi}[\bf Regular extension]\label{densityxy} 
Let $g \in L^1_{loc}({\X}\times\Theta)$ satisfy $\int_\Theta g(x,\theta)\,\ud\theta=1$ for a.e. $x\in\X$. We say that $g$ admits a regular extension if the following conditions are satisfied: 

\smallskip
   
\noindent{\it i)} 
there is a $\X$-regular motion $\Phi_x: \Theta_*\to\Theta_x$ according to {\rm Definition \ref{xmotion}} such that $\Theta_x\equiv\{g(x,\cdot)>0\}$ for $\mathcal L^{m}$-a.e. $x\in\X$; 

\smallskip

\noindent{\it ii)} there exists $\tilde g\in L^1_{loc}(\X\times\Theta)$ such that  $\tilde g\in W^{1,1}(\tilde{\X}\times\Theta)$ for any open set $\tilde{\X}$ compactly contained in $\X$  
and such that 
\begin{equation}\label{trunc}
\tilde g(x,\theta)\mathds 1_{\Theta_x}(\theta) = g(x,\theta)\qquad\mbox{ for $(\mathcal L^m\otimes\mathcal L^d)$-a.e. $(x,\theta)$ in $\X\times\Theta$}.
\end{equation} 
\end{defi}
Of course, for fixed $x$, the above identification $\equiv$ is understood up to $\mathcal L^d$-null sets of $\Theta$.
As $\tilde g$ from Definition \ref{densityxy} is in $W^{1,1}(\tilde{\X}\times \Theta)$, we shall use Sobolev regularity on linear submanifolds (see also \cite[Theorem 2.5.3]{bogachev}). 
We summarize some basic facts in the following proposition.
\begin{pro}\label{bogachev} 
Let $ \tilde g \in W^{1,1}(\tilde{\X}\times \Theta)$ for any open set $\tilde{\X}$ compactly contained in $\X$.
Let  $\nu \in\mathbb{S}^{m-1}$. 
For $\mathcal L^{m-1}$-a.e. $\xi\in\X_\nu$ and any two distinct points $x_1,x_2\in I_{\xi,\nu}$, the map $(t,\theta)\mapsto \tilde g(\mathbf s_{x_1,x_2}(t),\theta)$ belongs to $W^{1,1}((0,1)\times\Theta)$ and for 
$(\mathcal L^1\otimes\mathcal L^d)$-a.e. $(t,\theta)\in(0,1)\times\Theta$ there hold
\begin{equation}\label{onechain}
\begin{aligned}
&\frac{\ud}{\ud t}\tilde g(\mathbf s_{x_1,x_2}(t),\theta)= |x_2-x_1|\, \partial_\nu\tilde g(\mathbf s_{x_1,x_2}(t),\theta),\\&
\left|\frac{\ud}{\ud t}\, \tilde g(\mathbf s_{x_1,x_2}(t),\theta)\right| \le |x_2-x_1| \left|\nabla_x\, \tilde g(\mathbf s_{x_1,x_2}(t),\theta)\right|,
\end{aligned}
\end{equation}
and, in particular, $\partial_\nu\tilde  g(x,\cdot)\in L^1(\Theta)$ for $\mathcal{L}^1$-a.e. $x\in I_{\xi,\nu}$.
\end{pro}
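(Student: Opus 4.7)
The plan is to apply the ACL characterization of Sobolev functions to $\tilde g$, viewed as a function on an open subset of $\R^{m+d}$, and then to slice along the $\nu$-direction via a Fubini-type argument. First I would fix an open set $\tilde{\X}$ compactly contained in $\X$ that contains the line segment $[x_1,x_2]$, so that $\tilde g \in W^{1,1}(\tilde{\X}\times\Theta)$. Since the ACL property holds in every direction (not only coordinate ones), applied to the direction $(\nu,0)\in\R^{m+d}$, a Fubini-type argument on the distributional derivatives should yield that, for $\mathcal L^{m-1}$-a.e. $\xi \in \X_\nu$, the restriction of $\tilde g$ to $I_{\xi,\nu}\times\Theta$ belongs to $W^{1,1}_{loc}(I_{\xi,\nu}\times\Theta)$, with weak derivatives inherited from those of $\tilde g$; in particular both $\partial_\nu\tilde g$ and $\nabla\tilde g$ are locally integrable on the slice.

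Next I would apply the Sobolev chain rule to the affine reparametrization $t \mapsto \mathbf s_{x_1,x_2}(t)$. Since this is a Lipschitz map from $(0,1)$ into $I_{\xi,\nu}$, it would yield $(t,\theta)\mapsto \tilde g(\mathbf s_{x_1,x_2}(t),\theta) \in W^{1,1}((0,1)\times\Theta)$, together with the chain rule identity
\[
\frac{\ud}{\ud t}\tilde g(\mathbf s_{x_1,x_2}(t),\theta) = \nabla_x \tilde g(\mathbf s_{x_1,x_2}(t),\theta)\cdot(x_2-x_1) = |x_2-x_1|\,\partial_\nu\tilde g(\mathbf s_{x_1,x_2}(t),\theta)
\]
valid for $(\mathcal L^1\times\mathcal L^d)$-a.e. $(t,\theta)\in(0,1)\times\Theta$. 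The pointwise inequality is then the immediate Cauchy--Schwarz bound $|\partial_\nu\tilde g| = |\nabla_x\tilde g\cdot\nu|\le |\nabla_x\tilde g|$. Finally, the integrability $\partial_\nu\tilde g(x,\cdot)\in L^1(\Theta)$ for $\mathcal L^1$-a.e. $x\in I_{\xi,\nu}$ follows by Fubini's theorem from $\partial_\nu \tilde g \in L^1(I_{\xi,\nu}\times\Theta)$ (possibly after restricting to a bounded subarc of $I_{\xi,\nu}$ to avoid $L^1$ versus $L^1_{loc}$ issues).

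The main obstacle I expect lies in carrying out the slicing step rigorously: one must verify that the exceptional $\mathcal L^{m-1}$-null set of ``bad'' $\xi \in \X_\nu$ can be chosen uniformly with respect to the test functions employed in testing the distributional derivatives. This is achievable by a standard countable approximation argument exploiting the separability of $C^\infty_c(I_{\xi,\nu}\times\Theta)$ (in a suitable topology), combined with Fubini's theorem applied both to $\tilde g$ and to its weak gradient, so that the ACL representative on the ambient space restricts to a valid Sobolev representative on $\mathcal L^{m-1}$-a.e. slice.
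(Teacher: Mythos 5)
Your proposal is correct and follows essentially the same route as the paper: slice the Sobolev function onto $\mathcal L^{m-1}$-a.e.\ affine subspace $I_{\xi,\nu}\times\Theta$ (the paper phrases this as the ACL representative having the ACL property on almost every lower-dimensional hyperplane, citing Bogachev), compose with the affine segment parametrization to land in $W^{1,1}((0,1)\times\Theta)$, and conclude \eqref{onechain} from the a.e.\ coincidence of the classical directional derivative with $\nu\cdot\nabla_x\tilde g$ plus Cauchy--Schwarz and Fubini. The uniformity of the exceptional null set in the test functions, which you flag as the delicate point, is handled exactly by the countability argument you describe.
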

\begin{proof} 
The ACL representative of $\tilde g$ (here not relabeled) has the ACL property on almost any lower dimensional hyperplane intersecting $\tilde{\X}\times \Theta$.
Let $\nu\in\mathbb S^{m-1}$;
for $\mathcal L^{m-1}$-a.e. $\xi\in\X_\nu$, the map $(t,\theta)\mapsto g(\xi+t\nu,\theta)$ belongs therefore to  $W^{1,1}((t_1,t_2)\times\Theta)$ for any $t_1<t_2$ such that $\xi+t_i\nu\in\mathbb X$, $i=1,2$.  The map 
$(t,\theta)\mapsto \tilde g(\mathbf s_{x_1,x_2}(t),\theta)$ belongs to $W^{1,1}((0,1)\times\Theta)$ as the composition of the latter with the segment parametrization $[0,1]\ni t\mapsto\mathbf s_{x_1,x_2}(t)$, where $x=\xi+t_1\nu$ and $y=\xi+t_2\nu$. Then, \eqref{onechain} follows from the fact that $\tilde g$ has classical $\nu$-directional derivative almost everywhere, coinciding with the scalar product of $\nu$ with the gradient.  
See for instance \cite[Theorem 4, pp. 200]{GMS}.
\end{proof}

Associated to a function $g$ as in Definition \ref{densityxy},
we consider the boundary value problem \eqref{neumanngintro}, where $\mathbf V_x^\nu:=\partial_\nu \Phi_x\circ (\Phi_x)^{-1}$.
The precise notation for \eqref{neumanngintro}  is the same of problem \eqref{neumanng0}, apart from the $\X$-valued index $x$ instead of $t$. Therefore, for those couples $x,\nu$ such that $g(x,\cdot)\in W^{1,1}(\Theta_x)$ and $\partial_\nu\tilde g(x,\cdot)\in L^1(\Theta_x)$, we may define a weak solution $u_x^{\nu}\in H^{1}(\Theta_x,g(x,\cdot))$ to \eqref{neumanngintro} by means of Definition \ref{soluzionedebole}.

\begin{proofadconcrete}  
Throughout this proof, for notational ease, we shorten the expression $\mathbf s_{x_1,x_2}(t)$ to $\mathbf s(t)$, whenever it is clear which couple $(x_1, x_2)$ we are referring to.  
We start by preliminarily observing that, given $\nu\in\mathbb S^{m-1}$, for $\mathcal L^{m-1}$-a.e. $\xi\in\X_\nu$ and any couple of distinct points $x_1,x_2\in I_{\xi,\nu}$, 
the map $(t,\theta)\mapsto \tilde g(\mathbf s(t),\theta)$ belongs to $W^{1,1}((0,1)\times\Theta)$, by Proposition \ref{bogachev}. This fact entails that 
$g(\mathbf s(t),\cdot) \in W^{1,1}(\Theta_{\mathbf s(t)})$ and $\tfrac{\ud}{\ud t} \tilde g(\mathbf s(t),\cdot) \in L^1(\Theta_{\mathbf s(t)})$ for a.e. $t\in (0,1)$.
Therefore, we may take advantage
of the notion of weak solution to problem \eqref{neumanng0} as given in Definition \ref{soluzionedebole}, with $g_t(\cdot)$ therein replaced by $g(\mathbf s(t),\cdot)$, and 
$\Phi_t$ therein replaced by $\Phi_{\mathbf s(t)}$.

The proof is an application of Theorem \ref{abstractmixed}, for almost every line in $\X$ in any given direction.
Indeed, let us consider an ACL representative of the regular extension $\tilde g$, that we still denote by $\tilde g$. Of course, combining the assumptions on $g$ with \eqref{trunc}, we have
 $\int_{\Theta_x}\tilde g(x,\theta)\,\ud\theta=1$ and $\int_{\Theta_x} |\theta|^2\tilde g(x,\theta)\,\ud\theta<+\infty$ for $\mathcal L^m$-a.e. $x\in\X$. 
Let $\nu\in\mathbb S^{m-1}$. For $\mathcal L^{m-1}$-a.e. $\xi\in\X_\nu$ and any $x_1,x_2\in I_{\xi,\nu}$,
 we apply Theorem \ref{abstractmixed} to obtain $\tilde\pi(\cdot|x_1)\in\mathcal P_2(\overline\Theta)$, $\tilde\pi(\cdot|x_2)\in\mathcal P_2(\overline\Theta)$ and 
\begin{equation}\label{pitilde}
\Wdue\left(\tilde \pi(\cdot|x_1),\tilde\pi(\cdot|x_2)\right)\le |x_1-x_2|\,\int_0^1\left(\int_\Theta |\nabla u^\nu_{\mathbf s(t)}(\theta)|^2\,g(\mathbf s(t),\theta)\,\ud\theta\right)^{\frac12}\,\ud t,
\end{equation}
where $\tilde\pi(\cdot|x):=\tilde g(x,\cdot)\,\mathcal L^d\mr\Theta_x$ is a representative of the kernel $\pi(\cdot|\cdot)$, in view of \eqref{trunc}.
We notice that the appearance of the factor $|x_1-x_2|$ is due to \eqref{onechain} and to the identity
\begin{equation}\label{newchain}
\frac{\ud}{\ud t}\left(\Phi_{\mathbf s(t)}(\theta)\right) = |x_1-x_2|\, \partial_\nu\Phi_{\mathbf s(t)}(\theta).
\end{equation}
As a consequence, for $\mathcal L^{m-1}$-a.e. $\xi\in\X_\nu$ and any couple of distinct points $x_1,x_2\in I_{\xi,\nu}$,
we get $\Wdue\left(\tilde \pi(\cdot|x_1),\tilde\pi(\cdot|y_2)\right)\le K |x_1-x_2|$.
The last inequality follows from \eqref{pitilde} and \eqref{nuovoess}: we bound once more the $\mathcal L^1$-essential supremum on $(0,1)$ with the $\mathcal L^m$-essential supremum on $\X$, for all but a 
$\mathcal L^{m-1}$-null set of lines in a given direction.

Now, let $\psi\in C^1_c(\overline\Theta)$ and
$
\tilde G_\psi(x):=\int_{\Theta_x}\psi(\theta)\tilde g(x,\theta)\,\ud\theta,
$
so that, by Definition \ref{densityxy}, we get $|\tilde G_\psi(x)|\le \sup_\Theta|\psi|$ for $\mathcal L^m$-a.e. $x\in\X$. 
By performing the same estimate of the proof of Theorem \ref{abstractmixed}, also taking \eqref{onechain} and \eqref{newchain} into account, we have the following:  given any $\nu\in\mathbb S^{m-1}$, for $\mathcal L^{m-1}$-a.e. $\xi\in\X_\nu$ and any couple of distinct points $x_1,x_2\in I_{\xi,\nu}$ \hbox{there holds}
\begin{equation}\label{quadruple}
\begin{aligned}
&\frac{\ud}{\ud r}\tilde G_\psi(\mathbf s(r)){\Bigg{|}_{r=t}} = |x_1-x_2| \int_{\Theta_{\mathbf s(t)}}\nabla\psi(\theta)\cdot \nabla u^\nu_{\mathbf s(t)}(\theta)\,g(\mathbf s(t),\theta)\,\ud\theta
\end{aligned}
\end{equation}
for a.e. $t\in(0,1)$, where we have used the definition of $u_z^\nu$ as solution to the boundary value problem \eqref{neumanngintro}.
Taking \eqref{gradientformula} into account, \eqref{quadruple} can be rephrased as follows: 
$
\nu\cdot \nabla _x \tilde G_\psi(x)= \int_{\Theta_x}\nabla\psi(\theta)\cdot \nabla u^\nu_x(\theta)\,g(x,\theta)\,\ud\theta 
$
for $\mathcal L^m$-a.e. $x\in\X$. Since $\int_{\Theta_x}\tilde g(x,\theta)\,\ud\theta=1$ for $\mathcal L^m$-a.e. $x\in\X$, we further estimate by the Cauchy-Schwarz inequality and \eqref{nuovoess}, to get
\[
\begin{aligned}
|\nu\cdot \nabla_x\tilde G_\psi(x)| 
& \le \left(\int_{\Theta_x} |\nabla\psi(\theta)|^2 g(x,\theta)\,\ud\theta\right)^{\frac12}\left(\int_{\Theta_x}|\nabla u^\nu_x(\theta)|^2 g(x,\theta)\,\ud\theta\right)^{\frac12}\le K\mathrm{Lip}(\psi)
\end{aligned}
\]
for $\mathcal L^m$-a.e. $x\in\X$. Here, $K$ is independent of $\nu$ by assumption, hence $|\nabla_x \tilde G_\psi(x)| \le K\mathrm{Lip}(\psi)$ for $\mathcal L^m$-a.e. $x\in\X$.
Note that $\tilde G_\psi$ is a representative of the $L^\infty(\X)$ function $G_\psi(\cdot):=\int_\Theta\psi(\theta)\,g(\cdot,\theta)\,\ud\theta$.
Having shown that $|G_\psi(x)|\le \sup_\Theta|\psi|$ and $|\nabla_x G_\psi(x)|\le K\mathrm{Lip}(\psi) $ for $\mathcal L^m$-a.e. $x\in\X$,
by the same argument as in the proof of Theorem \ref{th:0}-{\bf(ii)}, we obtain the existence of a $\mathcal W_1$-Lipschitz representative $\pi^*(\cdot|\cdot)$ for the probability kernel $\pi(\cdot|\cdot)$.

Since $G_\psi^*(x):=\int_{\Theta}\psi(\theta)\,\pi^*(\ud\theta|x)$ is Lipschitz on $\X$
and $\tilde G_\psi$ is $ACL$, the two functions coincide pointwise everywhere on almost every segment in a given direction $\nu\in\mathbb S^{m-1}$. Taking a countable dense subset $\mathcal D$ of $\psi\in C^1_c(\overline\Theta)$ (in the $C^1(\overline\Theta)$ norm) shows that $\pi^*(\cdot|\cdot)$ coincides with $\tilde \pi(\cdot|\cdot)$ on almost every line segment in the same direction $\nu$.
Therefore, given $\nu\in\mathbb S^{m-1}$, $\pi^*(\cdot|\cdot)$ itself satisfies $\mathcal W_2\left(\pi^*(\cdot|x_1),\pi^*(\cdot|x_2)\right)\le K |x_1-x_2|$ for $\mathcal L^{m-1}$-a.e. $\xi\in\mathbb X_\nu$ and any couple of distinct points $x_1,x_2\in I_{\xi,\nu}$. The result follows by the same argument at the end of the proof of Theorem \ref{th:0}-{\bf (iii)}.
\end{proofadconcrete}

In most situations a solution  to \eqref{neumanngintro} is not at disposal. Therefore, with some stronger assumptions we try to give an estimate of the norm of the solution in its dual formulation as seen in Theorem \ref{abs}. This is done in Theorem \ref{maintrace}.
We recall that the definition of the Fisher functionals $\mathcal J_1$ and $\mathcal J_2$ is given in \eqref{fisherfunctional}.

\begin{proofad2} 
Throughout this proof, as in the previous one, we simplify the notation by writing  $\mathbf s(t)$ in place of $\mathbf s_{x_1,x_2}(t)$, since no ambiguity arises. 
We apply  Reynolds transport formula. Let us consider an ACL representative of $\tilde g$, still denoted by $\tilde g$. Let $\psi\in C^1_c(\overline\Theta)$.
Given any $\nu\in\mathbb S^{m-1}$, for $\mathcal L^{m-1}$-a.e. $\xi\in\X_\nu$ we take any $x_1,x_2\in I_{\xi,\nu}$ and we obtain the absolute continuity of the map 
$[0,1]\ni t\mapsto \int_{\Theta_{\mathbf s(t)}}\psi(\theta)\tilde g(\mathbf s(t),\theta)\,\ud\theta$, along with
\begin{equation}\label{uffa}
\begin{aligned}
&\frac{\ud}{\ud r}\int_{\Theta_{\mathbf s(t)}}\psi(\theta)\,{\tilde g(\mathbf s(r),\theta)}\,\ud\theta{\bigg|}_{r=t}
=\frac{\ud}{\ud r}\int_\Theta\overline\psi_{\mathbf s(t)}(\theta)\,{\tilde g(\mathbf s(r),\theta)}\,\ud\theta{\bigg|}_{r=t}\\ 
&\; = |x_1-x_2| \int_{\Theta_{\mathbf s(t)}}\overline\psi_{\mathbf s(t)}(\theta)\,\partial_\nu{\tilde g(\mathbf s(t),\theta)}\,\ud\theta +|x_1-x_2| \int_{\Theta_{\mathbf s(t)}}\mathrm{div}\left(\overline\psi_{\mathbf s(t)}(\theta)\tilde g(\mathbf s(t),\theta)\mathbf V_{\mathbf s(t)}(\theta)\right)\,\ud\theta
\end{aligned}
\end{equation}
for a.e. $t\in(0,1)$. Here, we have used Lemma \ref{reynolds} and \eqref{onechain}, and we have introduced the function
$$
\overline\psi_{\mathbf s(t)}(\cdot):=\psi(\cdot)-\int_{\Theta_{\mathbf s(t)}} \psi(\theta)\tilde g(\mathbf s(t),\theta)\,\ud\theta, \qquad t\in[0,1].
$$

Let us proceed by estimating the two terms in the right hand side of \eqref{uffa}. 
The first term in the right hand side of \eqref{uffa} can be treated as in the proof of Theorem \ref{th:0}-{\bf (iii)}, so that by Cauchy-Schwarz inequality
\begin{equation*}
\int_{\Theta_{\mathbf s(t)}}\overline\psi_{\mathbf s(t)}(\theta)\,\partial_\nu \tilde g(\mathbf s(t),\theta)\,\ud\theta\le \left(\int_\Theta |\overline\psi_{\mathbf s(t)}(\theta)|^2\,g(\mathbf s(t),\theta)\,\ud\theta\right)^{\frac12}  
\mathcal{J}_1(\tilde g(\mathbf s(t),\cdot))
\end{equation*}
and then the Poincar\'e inequality \eqref{wirtinger} implies 
\begin{equation}\label{termine1}
\int_{\Theta_{\mathbf s(t)}}\overline\psi_{\mathbf s(t)}(\theta)\,\partial_\nu \tilde g(\mathbf s(t),\theta)\,\ud\theta\le \mathcal C[g(\mathbf s(t),\cdot)] \left(\int_\Theta |\nabla\psi(\theta)|^2\,g(\mathbf s(t),\theta)\,\ud\theta\right)^{\frac12}  \mathcal{J}_1[\tilde g(\mathbf s(t),\cdot)].
\end{equation}
Note that $\tilde g(\mathbf s(t),\cdot)\mathds 1_{\Theta_{\mathbf s(t)}}(\cdot)$ and $g(\mathbf s(t),\cdot)$ coincide for a.e. $t\in(0,1)$ as $L^1(\Theta)$ functions. 
The divergence term in \eqref{uffa} can be estimated by Cauchy-Schwarz and Poincar\'e's inequalities: indeed, since $\int_\Theta g(x,\theta)\,\ud\theta=1$ and $|\mathbf V_z^\nu|\le |\mathbf V_z|$, making use of the shorthand $\mathcal A_t:=\|\mathbf V_{\mathbf s(t)}\|_{W^{1,\infty}(\Theta_{\mathbf s(t)})}$, there holds
\begin{equation*}\label{termine2}\begin{aligned}
&\int_{\Theta_{\mathbf s(t)}}\mathrm{div}\left(\overline\psi_{\mathbf s(t)}(\theta)\tilde g(\mathbf s(t),\theta)\mathbf V_{\mathbf s(t)}(\theta)\right)\,\ud\theta \\&\qquad\le \mathcal A_t\left(  \int_{\Theta_{\mathbf s(t)}}|\nabla (\overline\psi_{\mathbf s(t)}(\theta)\,\tilde g(\mathbf s(t),\theta))|\,\ud\theta+\int_{\Theta_{\mathbf s(t)}}|\overline\psi_{\mathbf s(t)}(\theta)|\,\tilde g(\mathbf s(t),\theta)\,\ud\theta\right)\\
&\qquad\le  \mathcal A_t
\left[\left(\int_{\Theta_{\mathbf s(t)}}|\nabla\overline\psi_{\mathbf s(t)}|^2\tilde g(\mathbf s(t),\theta)\,\ud\theta\right)^{\frac12}+\int_{\Theta_{\mathbf s(t)}}|\overline\psi_{\mathbf s(t)}|\,(\tilde g(\mathbf s(t),\theta)+|\nabla \tilde g(\mathbf s(t),\theta)|)\,\ud\theta\right]\\
&\qquad\le \mathcal A_t \left(\int_{\Theta}|\nabla \psi(\theta)|^2\,g(\mathbf s(t),\theta)\,\ud\theta\right)^{\frac12}
\Big{(}1+\mathcal C[g(\mathbf s(t),\cdot)]\,+ \mathcal C[g(\mathbf s(t),\cdot)]\mathcal J_2(\tilde g(\mathbf s(t),\cdot))\Big{)}.
\end{aligned}\end{equation*}

By plugging \eqref{termine1} and the latter estimate into \eqref{uffa}, we get the following: given any $\nu\in\mathbb S^{m-1}$, for $\mathcal L^{m-1}$-a.e. $\xi\in\X_\nu$ and any couple $x_1,x_2\in I_{\xi,\nu}$, there holds
\begin{equation}\label{overfull}\begin{aligned}
&\frac{\ud}{\ud r}\int_{\Theta_{\mathbf s(t)}}\psi(\theta)\,{\tilde g(\mathbf s(r),\theta)}\,\ud\theta{\bigg|}_{r=t}
\\&\quad\le \mathcal C[g(\mathbf s(t),\cdot)] \left(\int_\Theta |\nabla\psi(\theta)|^2\,g(\mathbf s(t),\theta)\,\ud\theta\right)^{\frac12} \, \mathcal{J}_1[\tilde g(\mathbf s(t),\cdot)]\,|x_1-x_2|\\
&\quad\;\;\;\;+ \mathcal A_t \left(\int_{\Theta_{\mathbf s(t)}}|\nabla \psi(\theta)|^2\,g(\mathbf s(t),\theta)\,\ud\theta\right)^{\frac12}
\Big{(}1+\mathcal C[g(\mathbf s(t),\cdot)](1+\mathcal J_2(\tilde g(\mathbf s(t),\cdot)))\Big{)}\,|x_1-x_2|
\end{aligned}\end{equation}
for a.e. $t\in(0,1)$.
Now, let $G_\psi(\cdot):=\int_\Theta g(\cdot,\theta)\,\ud\theta$, so that $|G_\psi(x)|\le \sup_{\overline\Theta}|\psi|$ for $\mathcal L^m$-a.e. \!$x\in\X$. 
However, by \eqref{overfull}, by assumption and by the same argument as in the proof of Theorem \ref{concretemixed}, we get $|\nabla G_\psi(x)|\le K\mathrm{Lip}(\psi)$ for $\mathcal L^m$-a.e. \!$x\in\X$. Again this shows that there exists a $\mathcal W_1$-Lipscthiz representative $\pi^*(\cdot|\cdot)$ of the kernel $\pi(\cdot|\cdot)$.

We now let $\tilde \pi(\cdot|x):=\tilde g(x,\cdot)\,\mathcal L^d\mr\Theta_x$, which gives a representative of the kernel $\pi(\cdot|\cdot)$.
Let $\mathcal D$ be a countable dense subset of $C^1_c(\overline\Theta)$ (in the $C^1(\overline\Theta)$ norm) and let $\nu \in\mathbb S^{m-1}$. For  $\mathcal{L}^{m-1}$-a.e. \!$\xi\in\X_\nu$ and any 
$x_1,x_2\in I_{\xi,\nu}$, from \eqref{overfull} we get 
\[
\begin{aligned}
\Psi_{x_1,x_2}(t):&=\sup\left\{\frac{\ud}{\ud r}\int_\Theta\psi(\theta)\,\tilde \pi(\ud\theta|\mathbf s(t)){\bigg|}_{r=t}: \psi\in \mathrm{span}\mathcal D,\;\int_\Theta|\nabla\psi(\theta)|^2\,
\tilde\pi(\ud\theta|\mathbf s(t))\le1\right\}\\ 
&  \le |x_1-x_2|\, \|\mathbf V_{\mathbf{s}(t)}\|_{W^{1,\infty}(\Theta_{\mathbf s(t)})} \Big{(}1+\mathcal C[g(\mathbf s(t),\cdot)] (1+\mathcal J_2[g(\mathbf s(t),\cdot)])\Big{)} \\&\qquad+|x_1-x_2|\,\mathcal C[g(\mathbf s(t),\cdot)] \mathcal J_1[g(\mathbf s(t),\cdot)]
\end{aligned}
\]
for a.e. $t\in(0,1)$. Therefore, the estimate
\[
\begin{aligned}
&\int_0^1\Psi_{x_1,x_2}(t)\,\ud t \le|x_1-x_2|\int_0^1  \|\mathbf V_{\mathbf{s}(t)}\|_{W^{1,\infty}(\Theta_{\mathbf s(t)})} 
\Big{(}1+\mathcal C[g(\mathbf s(t),\cdot)](1+\mathcal J_2[g(\mathbf s(t),\cdot)]) \Big{)}\,\ud t\\
&\qquad +|x_1-x_2| \int_0^1\mathcal C[g(\mathbf s(t),\cdot)] \mathcal J_1[g(\mathbf s(t),\cdot)]\,\ud t\le K\,|x_1-x_2|
\end{aligned}
\]
holds for $\mathcal{L}^{m-1}$-a.e. \!$\xi\in\X_\nu$ and any $x_1,x_2\in I_{\xi,\nu}$. By invoking Theorem \ref{abs},
we deduce that $\tilde\pi(\cdot|x_1)$ and $\tilde\pi(\cdot|x_2)$ are in $\mathcal P_2(\overline\Theta)$ and that $\mathcal W_2(\tilde\pi(\cdot|x_1),\tilde\pi(\cdot|x_2))\le K|x_1-x_2|$, for $\mathcal{L}^{m-1}$-a.e. 
\!$\xi\in\X_\nu$ and any $x_1,x_2\in I_{\xi,\nu}$. By the same argument as in the proof of Theorem \ref{concretemixed}, the same conclusion holds for $\pi^*(\cdot|\cdot)$, which identifies with $\tilde\pi(\cdot|\cdot)$ on almost every line in any given direction. But $x\mapsto \pi^*(\cdot|x)$ is $\mathcal W_1$-Lipschitz on the whole of $\X$, so that we conclude by the argument already explained at the end of the proof of Theorem \ref{th:0}-{\bf(iii)}.
\end{proofad2}

\subsection{Infinite-dimensional sample space: proof of Theorem \ref{infinity}}
We next provide the proof of the results that deal with infinite-dimensional sample space from Subsection \ref{infinitesection}. In this case we shall prove that a Lipschitz estimate holds for  `good couples' 
$(x_1,x_2)\in\mathbb B(\Z)$ and then we invoke the Lipschitz extension result from Lemma \ref{lm:lipschitzextension} in the Appendix. 

\begin{proofadinfinity} We start by proving point {\bf (i)}.
Let $\psi\in\ C^1_c(\overline\Theta)$ and $G_\psi(x):=\int_\Theta g(x,\theta)\,\pi(\ud\theta)$. Given a couple $(x_1,x_2)\in \mathbb B(\Z)$,
for $\pi$-a.e. $\theta\in\Theta$, the map $[0,1]\ni t\mapsto g(\mathbf s_{x_1,x_2}(t),\theta)$ has, by assumption, the following properties: it is absolutely continuous, and for a.e. $t\in(0,1)$ the point 
$(\mathbf s_{x_1,x_2}(t),\theta)$ is a Gateaux-differentiability point of $g$ with respect to the $x$-variable. Therefore,  by an application of 
Fubini's theorem, we get
\[\begin{aligned}
&G_\psi(x_2)-G_\psi(x_1)= \int_\Theta \psi(\theta)(g(\mathbf s_{x_1,x_2}(1),\theta)-g(\mathbf s_{x_1,x_2}(0),\theta))\,\pi(\ud\theta)\\&\quad=\int_\Theta\psi(\theta)\int_0^1\frac{\ud}{\ud t}(g(\mathbf s_{x_1,x_2}(t),\theta))\,\ud t\,\pi(\ud\theta)\\&\quad=\int_0^1\int_\Theta\psi(\theta)\langle D_x g(\mathbf s_{x_1,x_2}(t),\theta),x_2-x_1\rangle\,\pi(\ud\theta)\,\ud t \\&\quad\le \|x_2-x_1\|_{\V} \sup_\Theta |\psi|\,\esssup_{t\in(0,1)}\int_\Theta \|D_x g(\mathbf s_{x_2,x_1}(t),\theta)\|_{\V'}\,\pi(\ud\theta)\\
&\quad\le \|x_2-x_1\|_{\V} \sup_\Theta |\psi|\,\lesssup_{x\in\X} \int_\Theta \|D_x g(x,\theta)\|_{\V'}\,\pi(\ud\theta).
\end{aligned}
\]
In the last inequality we have used the fact that any function $F:\X\to\R$ satisfies 
\begin{equation}\label{esss}
\esssup_{t\in (0,1)} F(\mathbf s_{x_1,x_2}(t))\le \lesssup_{x\in\X} F(x),
\end{equation} 
since $\mathcal H^1([x_1,x_2]\cap\Z)=0$ as $(x_1,x_2)\in\mathbb B(\Z)$. Therefore, we get
 \[\begin{aligned}
2\ud_{TV}(\pi(\cdot|x_2),\pi(\cdot|x_1)) &= \sup_{{\psi\in C_c(\Theta)}\atop{|\psi|\le 1}}(G_\psi(x_2)-G_\psi(x_1))\\& \le  \sup_{{\psi\in C_c(\Theta)}\atop{|\psi|\le 1}}  \|\nabla G_\psi\|_{L^\infty(\X)}\,\|x_1-x_2\|_{\V} \le K \|x_1-x_2\|_{\V}
\end{aligned}\]
for any $(x_1,x_2)\in\mathbb B(\Z)$. By invoking the extension result from Lemma \ref{lm:lipschitzextension}, the mapping $x\mapsto\pi(\cdot|x)\in\mathcal{P}(\overline\Theta)$ admits a Lipschitz continuous extension 
 $\X\ni x\mapsto\pi^*(\cdot|x)\in\mathcal{P}(\overline\Theta)$ with respect to the total variation distance. Since $\pi(\Theta)=1$, the continuity in total variation also shows that $\pi^*(\Theta|x)=1$ for any $x\in\X$. This ends the proof of {\bf (i)}.

The result in point {\bf (ii)} is obtained by introducing the Poincar\'e inequality \eqref{wirtinger} in the computations of the proof of point {\bf (i)}, as done in the proof of Theorem \ref{th:0}-{\bf (ii)}.

Let us conclude by proving {\bf (iii)}. The proof is similar to the one of Theorem \ref{th:0}-{\bf (iii)}.
Let $\mathcal D$ denote a countable dense subset of $C^1_c(\overline\Theta)$ (in the $C^1(\overline\Theta)$ norm), let $G_{\psi}(x):=\int_\Theta\psi(\theta)g(x,\theta)\,\pi(\ud\theta)$.
Let us consider a couple $(x_1,x_2)\in\mathbb B(\Z)$. Thanks to the assumptions, we have the absolute continuity of the map $t\mapsto g(\mathbf s_{x_1,x_2}(t),\theta)$ for $\pi$-a.e. $\theta\in\Theta$ and 
we may apply Lemma \ref{pilemma} from the Appendix, so that the map 
$t\mapsto \int_\Theta \psi(\theta) g(\mathbf s_{x_1,x_2}(t),\theta)\,\pi(\ud\theta)$
 is absolutely continuous for any $\psi\in \mathcal D$ and we may differentiate under integral sign to get for a.e. $t\in(0,1)$
\[
\frac{\ud}{\ud r}\int_\Theta\psi(\theta)\,g(\mathbf s_{x_1,x_2}(r),\theta)\,\pi(\ud\theta){\bigg|}_{r=t} = \int_\Theta\psi(\theta)\left(\frac{\ud}{\ud t}\,g(\mathbf s_{x_1,x_2}(t),\theta)\right)\,\pi(\ud\theta).
\]
As usual, the $\mathcal L^1$-null set of non-differentiability points of the map $t\mapsto \int_\Theta \psi(\theta) g(\mathbf s_{x_1,x_2}(t),\theta)\,\ud\theta$ is independent of $\psi\in\mathcal D$, since $\mathcal D$ is countable. By considering the Gateaux-differentiability property of $g$, we get, for a.e. $t\in(0,1)$,
\[
\frac{\ud}{\ud r}\int_\Theta\psi(\theta)\,{g(\mathbf s_{x_1,x_2}(r),\theta)}\,\pi(\ud\theta){\bigg|}_{r=t}\le \|x_1-x_2\|_{\V} \int_\Theta|\psi(\theta)| \|D_x g(\mathbf s_{x_1,x_2}(t),\theta)\|_{\V'}\,\pi(\ud\theta)\ .
\]
Hence, combining the Cauchy-Schwarz and the Poincar\'e inequality \eqref{wirtinger}, we obtain
\[
\begin{aligned}
&\frac{\ud}{\ud r}\int_\Theta\psi(\theta)\,g(\mathbf s_{x_1,x_2}(r),\theta)\,\pi(\ud\theta){\bigg|}_{r=t}
=\inf_{a\in\R} \frac{\ud}{\ud r}\int_\Theta(\psi(\theta)-a)\,g(\mathbf s_{x_1,x_2}(r),\theta)\,\pi(\ud\theta){\bigg|}_{r=t}\\
&\;\le \|x_1-x_2\|_{\V} \left(\int_\Theta\frac{\|D_x g(\mathbf s_{x_1,x_2}(t),\theta)\|^2_{\V'}}{g(\mathbf s_{x_1,x_2}(t),\theta)}\,\pi(\ud\theta)\right)^{\frac12}\inf_{a\in\R}\left(\int_\Theta |\psi(\theta)-a|^2 
g(\mathbf s_{x_1,x_2}(t),\theta)\,\pi(\ud\theta)\right)^\frac12\\
&\;\le \|x_1-x_2\|_{\V} \,\mathcal C[g(\mathbf s_{x_1,x_2}(t),\cdot)\,\pi] \left(\int_\Theta |\nabla\psi(\theta)|^2 g(\mathbf s_{x_1,x_2}(t),\theta)\,\pi(\ud\theta)\right)^\frac12
\mathcal J_{\pi}[g(\mathbf s_{x_1,x_2}(t),\cdot)]\ .
\end{aligned}
\]
Whence,
\[\begin{aligned}
\Psi_{x_1,x_2}(t) &:=\sup_{\psi\in\mathrm{span}\mathcal D} \left\{\frac{\ud}{\ud r}\int_\Theta\psi(\theta)\,g(\mathbf s_{x_1,x_2}(r),\theta)\,\pi(\ud\theta){\bigg|}_{r=t}: \int_\Theta |\psi(\theta)|^2 
g(\mathbf s_{x_1,x_2}(t),\theta)\,\ud\theta\le 1\right\}\\
&\le \mathcal C[g(\mathbf s_{x_1,x_2}(t),\cdot)\,\pi]\,\mathcal J_{\pi}[g(\mathbf s_{x_1,x_2}(t),\cdot)]\, \|x_1-x_2\|_{\V}.
\end{aligned}\]
Combining the latter estimate with \eqref{esss}, we conclude that for any $(x_1,x_2)\in\mathbb B(\Z)$ there holds
\[
\int_0^1\Psi_{x_1,x_2}(t)\,\ud t\le \|x_1-x_2\|_{\V}\int_0^1 \mathcal C[g(\mathbf s_{x_1,x_2}(t),\cdot)\,\pi] \mathcal J_{\pi}[g(\mathbf s_{x_1,x_2}(t),\cdot) \pi]\,\ud t\le K\|x_1-x_2\|_{\V}.
\]
Hence, an application of Theorem \ref{abs} shows that  $x\mapsto\pi(\cdot|x)$ satisfies the desired estimate for any $(x_1,x_2)\in\mathbb B(\Z)$. The Lipschitz extension property from Lemma 
\ref{lm:lipschitzextension}, applied to the complete metric space $(\mathcal P_2(\overline\Theta),\Wdue),$ yields the result.
\end{proofadinfinity}

\KKK

\section{Appendix} 

\subsection{A proof of Reynolds transport formula}

We give here a proof of some useful calculus formulae that are often needed through the paper.
 The following is a proof of Reynolds transport theorem, see also, for instance,  \cite[Th\'eor\`em 5.2.2]{hp} or \cite[Section 10]{Gu}. The proof is given for domains that vary according to a regular motion as defined in section \ref{theorysection}.   In the following lemma, we make use of the notation  $C^1_a(\overline\Theta):=\{\psi+a: a\in\R,\, \psi\in C^1_c(\overline\Theta)\}$ where, as usual, $\Theta$ is an open connected subset of $\rd$.
\begin{lem}\label{reynolds}
Let $\tilde g\in W^{1,1}((a,b)\times\Theta)$. Let $[a,b]\times\Theta_*\ni (t,\theta)\mapsto \Phi_t(\theta)\in\rd$ be a regular motion in $\Theta$ according to {\rm Definition \ref{motion}},  with $\Theta_t:=\Phi_t(\Theta_*)$. 
Then (any ACL representative of) $\tilde g$ is such that $[a,b]\ni t\mapsto\int_{\Theta_t}\psi(\theta)\tilde g(t,\theta)\,\ud\theta$ is absolutely continuous for any  $\psi\in C^1_a(\overline\Theta)$. Moreover, given $\psi\in C^1_a(\overline\Theta)$ there holds for a.e. $t\in(a,b)$ 
\[
\begin{aligned}
\frac{\ud}{\ud t}\int_{\Theta_t}\psi(\theta)\tilde g(t,\theta)\,\ud\theta
&=\int_{\Theta_t}\psi(\theta)\partial_t \tilde g(t,\theta)\,\ud\theta+\int_{\Theta_t} \nabla\cdot \left(\psi(\theta)\,\tilde g(t,\theta)(\partial_t\Phi_t\circ\Psi_t)(\theta)\right)\,\ud\theta,
\end{aligned}
\]
where $\Psi_t:\Theta_t\to\Theta_*$ is the inverse of $\Phi_t$. If $\psi\in C^1_c(\overline\Theta)$ we also have for a.e. $t\in(a,b)$
\[\begin{aligned}
\frac{\ud}{\ud t}\int_{\Theta_t}\psi(\theta)\tilde g(t,\theta)\,\ud\theta
&=\int_{\Theta_t}\psi(\theta)\partial_t \tilde g(t,\theta)\,\ud\theta+\int_{\partial\Theta_t} \psi(\sigma)\,\tilde g(t,\sigma)\mathbf{n}_t(\sigma)\cdot(\partial_t\Phi_t\circ\Psi_t)(\sigma)\,\mathcal{H}^{d-1}(\ud\sigma),
\end{aligned}\]
where $\mathbf{n}_t$  denotes the exterior normal to $\Theta_t$, and  the $L^1_{loc}(\partial\Theta_t)$ boundary trace of $\tilde g(t,\cdot)$ on $\partial\Theta_t$ appears in the last term.
\end{lem}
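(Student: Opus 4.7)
The natural strategy is to freeze the domain by pulling everything back to $\Theta_\ast$ via the diffeomorphism $\Phi_t$. Setting $J_t(\eta):=\det\nabla\Phi_t(\eta)$, the regularity assumed on $\Phi_t$ in Definition \ref{motion} guarantees that $J_t$ is uniformly bounded from above and below on $[a,b]\times\Theta_\ast$, and that $J_t$ is Lipschitz in $(t,\eta)$; in particular
\[
\int_{\Theta_t}\psi(\theta)\tilde g(t,\theta)\,\ud\theta=\int_{\Theta_\ast}\psi(\Phi_t(\eta))\,\tilde g(t,\Phi_t(\eta))\,J_t(\eta)\,\ud\eta.
\]
The first step is to show that the composition $(t,\eta)\mapsto \tilde g(t,\Phi_t(\eta))$ belongs to $W^{1,1}_{loc}((a,b)\times\Theta_\ast)$: this follows from the $W^{1,1}$-regularity of $\tilde g$ together with the bi-Lipschitz character of the map $(t,\eta)\mapsto (t,\Phi_t(\eta))$, via a standard approximation by smooth functions and the ACL chain rule. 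Multiplying by the smooth bounded factor $\psi\circ\Phi_t\cdot J_t$ we obtain that the integrand belongs to $W^{1,1}((a,b)\times\Theta_\ast)$ and, by Fubini, the map $t\mapsto \int_{\Theta_\ast}\psi(\Phi_t)\tilde g(t,\Phi_t)J_t\,\ud\eta$ is absolutely continuous on $[a,b]$, establishing the first claim.

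Next I would differentiate under the integral sign in the reference configuration. The chain rule produces three terms which I rewrite using $\mathbf V_t:=\partial_t\Phi_t\circ\Psi_t$:
\[
\begin{aligned}
\frac{\ud}{\ud t}\int_{\Theta_\ast}\!\psi(\Phi_t)\tilde g(t,\Phi_t)J_t\,\ud\eta&=\int_{\Theta_\ast}\!\nabla\psi(\Phi_t)\!\cdot\!\partial_t\Phi_t\,\tilde g(t,\Phi_t)\,J_t\,\ud\eta\\
&\quad+\int_{\Theta_\ast}\!\psi(\Phi_t)\bigl[\partial_t\tilde g(t,\Phi_t)+\nabla\tilde g(t,\Phi_t)\!\cdot\!\partial_t\Phi_t\bigr]J_t\,\ud\eta\\
&\quad+\int_{\Theta_\ast}\!\psi(\Phi_t)\,\tilde g(t,\Phi_t)\,\partial_t J_t\,\ud\eta.
\end{aligned}
\]
The key identity is Jacobi's formula $\partial_t J_t=J_t\,\mathrm{tr}\bigl((\nabla\Phi_t)^{-1}\partial_t\nabla\Phi_t\bigr)$. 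Since $\mathbf V_t\circ\Phi_t=\partial_t\Phi_t$, differentiation in $\eta$ yields $(\nabla\Phi_t)^{-1}\partial_t\nabla\Phi_t=(\nabla\mathbf V_t)\circ\Phi_t$, so that $\partial_t J_t=J_t\,(\nabla\cdot\mathbf V_t)\circ\Phi_t$. Changing variables back to $\Theta_t$ via $\theta=\Phi_t(\eta)$, the three terms recombine into
\[
\int_{\Theta_t}\!\psi(\theta)\partial_t\tilde g(t,\theta)\,\ud\theta+\int_{\Theta_t}\!\bigl[\nabla\psi\cdot\mathbf V_t\,\tilde g+\psi\nabla\tilde g\cdot\mathbf V_t+\psi\tilde g\,\nabla\cdot\mathbf V_t\bigr]\ud\theta,
\]
and the bracket is precisely $\nabla\cdot\bigl(\psi\,\tilde g\,\mathbf V_t\bigr)$, giving the first formula of the statement. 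For the second formula one applies the divergence theorem to the latter integral when $\psi\in C^1_c(\overline\Theta)$; this requires the trace of $\tilde g(t,\cdot)$ on $\partial\Theta_t$, which exists in $L^1_{loc}(\partial\Theta_t)$ for a.e.\ $t$ since $\partial\Theta_t$ is locally Lipschitz and $\tilde g(t,\cdot)\in W^{1,1}_{loc}(\Theta)$ for a.e.\ $t$.

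The main obstacle I anticipate is a careful justification of the chain rule for $\tilde g\circ\Phi_t$ when $\tilde g$ is only Sobolev: one cannot invoke classical differentiation, and must either argue via mollification of $\tilde g$ in $(t,\theta)$ (exploiting the Lipschitz continuity and non-degenerate Jacobian of the pull-back to transfer $W^{1,1}$ regularity) or directly via the ACL representative on almost every straight line in $(a,b)\times\Theta_\ast$. The bounds $k_1\le J_t\le k_2^d$ and $|\partial_t\Phi_t|+|\nabla\partial_t\Phi_t|\le k_2$ from Definition \ref{motion} will be used repeatedly to guarantee that every integrand produced by the chain rule is in $L^1$ in $(t,\eta)$, so that Fubini's theorem legitimates the exchange of $\ud/\ud t$ and integration.
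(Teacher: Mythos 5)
Your proposal is correct and follows essentially the same route as the paper's proof: pull back to the reference configuration $\Theta_\ast$, use the chain rule for the $W^{1,1}$ composition with the bi-Lipschitz map $(t,\eta)\mapsto(t,\Phi_t(\eta))$ together with Jacobi's formula for $\partial_t J_t$, change variables back to $\Theta_t$ and recombine the terms into a divergence, then apply the divergence theorem for the boundary form. The only step the paper spells out that you pass over quickly is the upgrade from ``the pulled-back integral has an absolutely continuous representative in $t$'' to ``the map $t\mapsto\int_{\Theta_t}\psi(\theta)\tilde g(t,\theta)\,\ud\theta$ defined by the chosen ACL representative is itself absolutely continuous at every $t$,'' which the paper settles by a short direct continuity check using $\tilde g(t,\cdot)\in L^1(\Theta)$, the pointwise convergence $\mathds 1_{\Theta_s}\to\mathds 1_{\Theta_t}$, and dominated convergence.
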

\begin{proof}
As $\Phi_t$ is a global diffeomorphism of $\Theta_*$ onto $\Theta_t$ for any $t\in[a,b]$, then  $(\mathbf{i},\Phi_t)$ is a global diffeomorphism of $(a,b)\times\Theta_*$ onto $\{(t,\theta)\in(a,b)\times\mathring\Theta:\theta\in\Theta_t\}$, whose Jacobian determinant is bounded away from $0$ and $+\infty$ (tanks to the assumptions in Definition \ref{motion}). Thus, $\tilde g\circ(\mathbf{i},\Phi_t)\in W^{1,1}((a,b)\times\Theta_*)$.

Let $\psi\in C^1_a(\overline\Theta)$. By change of variables we have $\mbox{for a.e. $t\in(a,b)$}$
\begin{equation}\label{basicchange*}
\int_{\Theta_t}\psi(\theta)\tilde g(t,\theta)\,\ud\theta=\int_{\Theta_*}\tilde g(t,\Phi_t(\theta))\psi(\Phi_t(\theta))\det\nabla\Phi_t(\theta)\,\ud\theta.
\end{equation}
By distributional chain rule we have 
$
\partial_t\tilde g(t,\Phi_t(\theta))+\nabla\tilde g(t,\Phi_t(\theta))\cdot\partial_t\Phi_t(\theta)=\frac{\ud}{\ud t}\tilde g(t,\Phi_t(\theta))
$ 
and similarly  
\[
\begin{aligned}
&\frac{\ud}{\ud t}\left[\tilde g(t,\Phi_t(\theta))\psi(\Phi_t(\theta))\det\nabla\Phi_t(\theta)\right]=\psi(\Phi_t(\theta))\det\nabla\Phi_t(\theta)\,\frac{\ud}{\ud t}\tilde g(t,\Phi_t(\theta))\\&\quad+\tilde g(t,\Phi_t(\theta))\det\nabla\Phi_t(\theta)\,\nabla\psi(\Phi_t(\theta))\cdot\partial_t\Phi_t(\theta) \\&\quad+\tilde g(t,\Phi_t(\theta))\psi(\Phi_t(\theta))\,\mathrm{Tr}((\partial_t\nabla\Phi_t)(\nabla\Phi_t)^{-1}) \,\det\nabla\Phi_t,
\end{aligned}
\]
where we used the identity $\tfrac{\ud}{\ud t}\det\nabla\Phi_t= \mathrm{Tr}((\partial_t\nabla\Phi_t)(\nabla\Phi_t)^{-1}) \,\det\nabla\Phi_t$ (with Tr denoting matrix trace).
Therefore, the assumptions in Definition \ref{motion} show that the map
$(t,\theta)\mapsto h_\psi(t,\theta):= \tilde g(t,\Phi_t(\theta))\psi(\Phi_t(\theta))\det\nabla\Phi_t(\theta)
$
belongs to $L^1((a,b)\times \Theta_*)$ together with $\partial_t h_\psi$. 
We conclude that, for a.e. $\theta\in\Theta_*$, the map  $t\mapsto h_\psi(t,\theta)$  is in $W^{1,1}(a,b)$,  hence  absolutely continuous  up to defining it  through  a representative of $\tilde g\circ(\mathbf{i},\Phi_t)\in W^{1,1}((a,b)\times\Theta_*)$ with the same property:  then, by the fundamental theorem of calculus and by Fubini theorem we have for $a\le s<t\le b$
\begin{equation}\label{fundamental}
\int_{\Theta_*} (h_\psi(t,\theta)-h_\psi(s,\theta))\,\ud\theta=\int_{\Theta_*}\int_{s}^{t}\partial_th_\psi(r,\theta)\,\ud r\,\ud\theta=\int_s^t \int_{\Theta_*}\partial_th_\psi(r,\theta)\,\ud\theta\,\ud r
\end{equation}
and this shows that $h_\psi(t,\cdot)\in L^1(\Theta_*)$ for any $t\in[a,b]$ and  that $t\mapsto\int_{\Theta_*}h_\psi(t,\theta)\,\ud\theta$ is   absolutely continuous  on $[a,b]$. 

We claim that if $\tilde g$ is a (not relabeled) ACL representative of $\tilde g$, then $t\mapsto \int_{\Theta_t}\psi(\theta)\,\tilde g(t,\theta)\,d\theta$ is indeed absolutely continuous on $[a,b]$. It is enough to check that it is continuous, since we have just shown that the right hand side of \eqref{basicchange*} has an absolutely continuous representative on $[a,b]$.  Assuming wlog that $a\le s<t\le b$, we have $\tilde g(t,\cdot)\in L^1(\Theta)$ for any $t\in[a,b]$ as well as the absolute continuity of $t\mapsto\int_\Theta\tilde g(t,\theta)\,d\theta$, since and $\partial_t\tilde g\in L^1((a,b)\times\Theta)$ and Fubini theorem implies as above
\[
\int_\Theta\tilde g(t,\theta)\,\ud\theta-\int_\Theta\tilde g(s,\theta)\,\ud\theta=\int_s^t \int_\Theta\partial_t\tilde g(t,\theta)\,\ud\theta\,\ud t\ . 
\] 
Then, again by Fubini theorem we get
\[
\int_{\Theta_t}\psi(\theta)\tilde g(t,\theta)\,\ud\theta-\int_{\Theta_s}\psi(\theta)\tilde g(s,\theta)\,\ud\theta
=\int_\Theta\psi(\theta)(\mathds 1_{\Theta_t}-\mathds 1_{\Theta_s})\tilde g(t,\theta)\,\ud\theta+\int_s^t\int_{\Theta_s}\psi(\theta)\partial\tilde g(t,\theta)\,\ud t
\] 
where the last term vanishes as $s\to t$ since $\psi$ is bounded and $\partial_t\tilde g\in L^1((a,b)\times\Theta)$. The first term in the right hand side vanishes as well as $s\to t$ thanks to dominated convergence, since 
$\tilde g(t,\cdot)\in L^1(\Theta)$ and since the pointwise converges of $\mathds 1_{\Theta_s}$ to $\mathds 1_{\Theta_t}$ easily follows from the assumptions in Definition \ref{motion}. The claim is proved.

Dividing \eqref{fundamental} by $s$ and using the Lebesgue points theorem, we see that for $a.e.$ $t\in (a,b)$ there holds
\[
\frac{\ud}{\ud t}\int_{\Theta_*}h_\psi(t,\theta)\,\ud\theta= \int_{\Theta_*}\partial_t h_\psi(t,\theta)\,\ud\theta.
\]
Since we can take the time derivative inside the integral sign, we have by change of variables and by the identity $(\nabla \Phi_t)^{-1}\circ \Psi_t=\nabla\Psi_t$, and with the notation $J_t=\det\nabla\Phi_t$ (so that $\tfrac{\ud}{\ud t}J_t= \mathrm{Tr}((\partial_t\nabla\Phi_t)(\nabla\Phi_t)^{-1}) \,\det\nabla\Phi_t$),
\begin{equation*}\label{gh}
\begin{aligned}
\frac{\ud}{\ud t}\int_{\Theta_t}\psi(\theta)\tilde g(t,\theta)\,\ud\theta&=\frac{\ud}{\ud t}\int_{\Theta_t}h_\psi(t,\theta)\,\ud\theta
=\frac{\ud}{\ud t}\int_{\Theta_*}\tilde g(t,\Phi_t(\theta))\,\psi(\Phi_t(\theta))\,J_t(\theta)\,\ud\theta\\
&=\int_{\Theta_*}\left(J_t(\theta)\,\frac{\ud}{\ud t}(\tilde g(t,\Phi_t(\theta))\psi(\Phi_t(\theta)))+\tilde g(t,\Phi_t(\theta))\psi(\Phi_t(\theta))\,\frac{\ud}{\ud t}J_t(\theta)\right)\,\ud\theta\\
&= \int_{\Theta_t}\partial_t\tilde g(t,\theta)\psi(\theta)\,\ud\theta+\int_{\Theta_t}\nabla\cdot{\Big{(}}\tilde g(t,\theta)\psi(\theta)\partial_t\Phi_t(\Psi_t(\theta)){\Big{)}}\,\ud\theta,
\end{aligned}
\end{equation*}
for a.e. $t\in(a,b)$. By the divergence theorem, the proof is concluded.
\end{proof}

Of course, if $\Theta_t\equiv\Theta$ for all $t$, we have that $\Phi_t$ is the identity map for any $t$.  Lemma \ref{reynolds} holds and Reynolds transport formula reduces to differentiation under integral sign. However, in such case we may extend the result to general probability measures on $\Theta$, without requiring a density. We have the following standard result.

\begin{lem}\label{pilemma} Let $(\Theta, \mathscr{T}, \pi)$ be a measure space, with $\pi$ a $\sigma$-finite measure.
Let $g:[a,b]\times\Theta\to\R$. Suppose that
\begin{itemize}
\item[i)] $g(\cdot,\theta)\in AC([a,b])$ for $\pi$-a.e. $\theta\in \Theta$
 and $g(t,\cdot)\in L^1_\pi(\Theta)$ for all $t\in [a,b]$; 
 
 \medskip
 
\item[ii)] 
$\displaystyle\int_a^b\int_\Theta|\partial_t g(t,\theta)|\,\pi(\ud\theta)\,\ud t<+\infty$.
\end{itemize}
Then the map $t\mapsto \int_\Theta \psi(\theta)g(t,\theta)\,\pi(\ud\theta)$ is absolutely continuous on $[a,b]$ for any bounded continuous function $\psi$ on $\Theta$. In particular, if $\psi$ is a bounded continuous function on $\Theta$ there holds a.e. on $(a,b)$
\[
\frac{\ud}{\ud t}\int_\Theta\psi(\theta)g(t,\theta)\,\pi(\ud\theta)=\int_\Theta\psi(\theta)\partial_t g(t,\theta)\,\pi(\ud\theta).
\]
\end{lem}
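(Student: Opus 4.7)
The plan is to reduce the statement to an application of Fubini's theorem combined with the fundamental theorem of calculus for absolutely continuous functions. First, I would pick a point $t_0 \in (a,b)$ such that $g(t_0, \cdot) \in L^1_\pi(\Theta)$, which is possible by assumption (i) since this holds for a.e. $t$. Then, for $\pi$-a.e. $\theta \in \Theta$, the absolute continuity of $t \mapsto g(t, \theta)$ on $[a,b]$ yields the pointwise representation
\[
g(t, \theta) = g(t_0, \theta) + \int_{t_0}^t \partial_s g(s, \theta)\, \ud s \qquad \forall\, t \in [a,b].
\]

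Next, I would multiply this identity by a bounded continuous function $\psi$ on $\Theta$ and integrate against $\pi$. Assumption (ii), combined with the boundedness of $\psi$, guarantees that $(s,\theta) \mapsto \psi(\theta)\partial_s g(s,\theta)$ belongs to $L^1_{\mathcal{L}^1\otimes \pi}((a,b)\times\Theta)$, so Fubini's theorem applies and produces
\[
\int_\Theta \psi(\theta) g(t, \theta)\, \pi(\ud\theta) = \int_\Theta \psi(\theta) g(t_0, \theta)\, \pi(\ud\theta) + \int_{t_0}^t \left( \int_\Theta \psi(\theta) \partial_s g(s, \theta)\, \pi(\ud\theta) \right) \ud s.
\]
The right-hand side is manifestly absolutely continuous on $[a,b]$, being a constant plus the antiderivative of a function in $L^1(a,b)$ (once more by assumption (ii)). Differentiating the Lebesgue-integral term with the Lebesgue differentiation theorem gives the claimed pointwise formula for a.e. $t \in (a,b)$.

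The only delicate point, and therefore the main obstacle, is the clean use of Fubini's theorem: one needs a jointly measurable representative of $(s,\theta) \mapsto \partial_s g(s,\theta)$ so that assumption (ii) has unambiguous meaning and the iterated integrals agree. This can be arranged by fixing an $\mathcal L^1 \otimes \psa$-measurable representative at the outset; the absolute continuity of $g(\cdot,\theta)$ for $\pi$-a.e. $\theta$ guarantees that the classical derivative $\partial_s g(s,\theta)$ exists for $(\mathcal L^1\otimes\pi)$-a.e. $(s,\theta)$, and coincides with such a representative. Once this mild measurability issue is settled, the argument is purely a one-line invocation of Fubini followed by the fundamental theorem of calculus, with no further obstacles.
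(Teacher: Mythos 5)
Your proof is correct and follows essentially the same route as the paper's: establish joint measurability of $\partial_t g$ (the paper does this via the Carath\'eodory-function observation that assumption (i) gives continuity in $t$ and measurability in $\theta$), write the increment of $g(\cdot,\theta)$ as the integral of its derivative by absolute continuity, apply Fubini using assumption (ii), and conclude by the Lebesgue differentiation theorem. No gaps.
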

\begin{proof} 
By assumption i), $g$ is a Carath\'eodory function, hence  $(\ud t\otimes\pi)$-measurable, moreover $g$ is (classically) partially differentiable with respect to $t$ at $(\ud t\otimes\pi)$-a.e. $(t,\theta)\in (a,b)\times\Theta$, and then, by assumption ii), we have  $\partial_tg\in L^1_{\ud t\otimes\pi}((a,b)\times\Theta)$.
For any $a\le t_1<t_2\le b$, assumption i) entails that 
$g(t_2,\theta)-g(t_1,\theta)=\int_{t_1}^{t_2}\partial_tg(t,\theta)\,\ud t$ holds for $\pi$-a.e. $\theta\in\Theta$,
the mapping $t\mapsto \partial_t g(t,\theta)$ being in $L^1(a,b)$ for $\pi$-a.e $\theta\in \Theta$.
Thanks to ii) and the boundedness of $\psi$, the map $t\mapsto \int_\Theta\psi(\theta)\partial_t g(t,\theta)\,\pi(\ud\theta)$ belongs to $L^1(a,b)$, and we may apply Fubini theorem to obtain for 
$a\le t_1<t_2\le b$
\[\begin{aligned}
\int_\Theta\psi(\theta)(g(t_2,\theta)-g(t_1,\theta))\,\pi(\theta)&=\int_\Theta\psi(\theta)\left(\int_{t_1}^{t_2}\partial_t g(t,\theta)\right)\pi(\ud\theta)
\\&=\int_{t_1}^{t_2}\int_\Theta\psi(\theta)\partial_t g(t,\theta)\,\pi(\ud\theta)\,\ud t \ . 
\end{aligned}\]
This shows that the map $t\mapsto\int_\Theta\psi(\theta)g(t,\theta)\,\pi(\ud\theta)$ is  an $AC([a,b])$ map.
Finally, since the map $t\mapsto \int_\Theta\psi(\theta)\partial_t g(t,\theta)\,\pi(\ud\theta) $ is in $L^1(a,b)$ and $g(\cdot,\theta)\in AC([a,b])$ for $\pi$-a.e. $\theta\in\Theta$, we apply Fubini once more to 
get
\[
 \int_\Theta\psi(\theta)\partial_t g(t,\theta)\,\pi(\ud\theta)
=\frac{\ud}{\ud t}\int_\Theta \psi(\theta)(g(t,\theta)-g(t_0,\theta))\,\pi(\ud\theta)=\frac{\ud}{\ud t}\int_\Theta\psi(\theta)g(t,\theta)\,\pi(\ud\theta),
\]
for $t_0\in (a,b)$ and for $a.e.$ t in $(a,b)$, which completes the proof.
\end{proof}


\subsection{Absolutely continuous curves and the continuity equation}

The characterization of absolutely continuous curves in Wasserstein spaces is investigated in \cite[Chapter 8]{AGS}. Following the argument of \cite[Theorem 8.3.1]{AGS}, we get the next result.

\begin{lem}\label{technical}
In the same assumptions of {\rm Theorem \ref{abs}}, there exists a Borel map $[0,1]\times\overline\Theta\ni (t,\theta)\mapsto\mathbf{v}(t,\theta)\in\R^d$ such that \eqref{distributionalcontinuityequation} is satisfied by the couple $(\mathbf{v}(t,\cdot),\mu_t)$. Moreover, $\|\mathbf{v}(t,\cdot)\|_{L^2_{\mu_t}(\overline\Theta;\R^d)}=\Psi(t)$ holds for a.e. $t\in(0,1)$.
\end{lem}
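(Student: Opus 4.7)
The strategy splits into two parts: (i) piecing the fiber-wise unique vectors $\mathbf w_t$ from the proof of Theorem \ref{abs} into a single jointly Borel map $\mathbf v(t,\theta)$ such that $\mathbf v(t,\cdot) = \mathbf w_t$ as elements of $L^2_{\mu_t}(\overline\Theta;\R^d)$ for a.e. $t\in(0,1)$, and (ii) verifying that this $\mathbf v$, coupled with $\{\mu_t\}$, satisfies \eqref{distributionalcontinuityequation}. Once (i) is established, the norm identity $\|\mathbf v(t,\cdot)\|_{L^2_{\mu_t}(\overline\Theta;\R^d)} = \Psi(t)$ is inherited directly from the corresponding claim in Theorem \ref{abs}, so the entire content of the lemma reduces to (i) and (ii).

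For (i), I would perform a Galerkin-type construction on the countable dense family $\mathcal D = \{\psi_n\}_{n\in\N}$. Let $H_t^N\subset L^2_{\mu_t}(\overline\Theta;\R^d)$ denote the subspace spanned by $\nabla\psi_1,\ldots,\nabla\psi_N$; the $L^2_{\mu_t}$-orthogonal projection of $\mathbf w_t$ onto $H_t^N$ admits the explicit form $\mathbf v^N(t,\theta) = \sum_{i=1}^N c_i^N(t)\,\nabla\psi_i(\theta)$, where $c^N(t)\in\R^N$ solves the linear system $G^N(t)\,c^N(t) = b^N(t)$ with Gram matrix $G^N_{ij}(t) := \int\nabla\psi_i\cdot\nabla\psi_j\,\mu_t(\ud\theta)$ and right-hand side $b^N_i(t) := \tfrac{\ud}{\ud r}\int\psi_i\,\ud\mu_r|_{r=t}$. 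Both $G^N(t)$ and $b^N(t)$ depend Borel measurably on $t$ (the former by narrow continuity of $\{\mu_t\}$ against the fixed continuous functions $\nabla\psi_i\cdot\nabla\psi_j$, the latter by the absolute continuity assumption of Theorem \ref{abs}), so choosing $c^N(t) := G^N(t)^{+}\,b^N(t)$ via the Moore--Penrose pseudoinverse yields a Borel $(t,\theta)\mapsto\mathbf v^N(t,\theta)$. Density of $\bigcup_N H_t^N$ in $\overline{\{\nabla\psi:\psi\in C^1_c(\overline\Theta)\}}^{L^2_{\mu_t}(\overline\Theta;\R^d)}$ then gives $\mathbf v^N(t,\cdot)\to\mathbf w_t$ in $L^2_{\mu_t}$ for a.e. $t$. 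To upgrade this $L^2$-convergence to a pointwise Borel limit, one selects a Borel index $N_k(t) := \min\{N\ge k:\,\sup_{M\ge N}\|\mathbf v^M(t,\cdot) - \mathbf v^N(t,\cdot)\|_{L^2_{\mu_t}}\le 2^{-k}\}$ (the infimum being Borel because $t\mapsto\|\mathbf v^M(t,\cdot)-\mathbf v^N(t,\cdot)\|_{L^2_{\mu_t}}^2$ is Borel for fixed $M,N$, again by narrow continuity of $\{\mu_t\}$); the resulting subsequence $(\mathbf v^{N_k(t)}(t,\cdot))_k$ is Cauchy in $L^2_{\mu_t}$ with summable errors and hence pointwise Cauchy for $\mu_t$-a.e. $\theta$ and a.e. $t$. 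Defining $\mathbf v(t,\theta)$ as the corresponding pointwise limit (set to $0$ where the limit fails to exist) produces the sought Borel vector field.

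For (ii), by standard density arguments it suffices to test \eqref{distributionalcontinuityequation} against tensor-product functions $\varphi(t,\theta) = \eta(t)\psi(\theta)$ with $\eta\in C^1_c(0,1)$ and $\psi\in C^1_c(\overline\Theta)$, since their linear span is dense in $C^1_c((0,1)\times\overline\Theta)$ in the $C^1$ norm. For $\psi\in\mathrm{span}\,\mathcal D$, the absolute continuity of $t\mapsto\int\psi\,\ud\mu_t$ permits integration by parts in $t$, yielding
\[
\int_0^1\eta'(t)\int_{\overline\Theta}\psi\,\ud\mu_t\,\ud t \,=\, -\int_0^1\eta(t)\,\frac{\ud}{\ud r}\int_{\overline\Theta}\psi\,\ud\mu_r\Big|_{r=t}\ud t \,=\, -\int_0^1\eta(t)\int_{\overline\Theta}\nabla\psi\cdot\mathbf v(t,\cdot)\,\ud\mu_t\,\ud t,
\]
the second equality being the Riesz identity \eqref{mixed} with $\mathbf w_t = \mathbf v(t,\cdot)$. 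Extension to arbitrary $\psi\in C^1_c(\overline\Theta)$ follows by $C^1$-approximation within $\mathcal D$, the convergence of the gradient term being controlled via Cauchy--Schwarz by the uniform bound $\|\mathbf v(t,\cdot)\|_{L^2_{\mu_t}} = \Psi(t)\in L^1(0,1)$.

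The main technical obstacle is clearly the joint Borel measurability in part (i): pointwise $L^2_{\mu_t}$-Cauchy behavior must be extracted from fiberwise $L^2_{\mu_t}$-convergence without the freedom to choose a $t$-dependent subsequence in an ad hoc, non-measurable way. The explicit Borel index $N_k(t)$ described above circumvents this by exploiting that, for any fixed pair of Borel vector fields, their $L^2_{\mu_t}$-distance depends Borel measurably on $t$ under narrow continuity of $\{\mu_t\}$; an equivalent but more abstract route would appeal to a Kuratowski--Ryll-Nardzewski type measurable selection for the $L^2_{\mu_t}$-equivalence class of $\mathbf w_t$, but the Galerkin approach has the virtue of being constructive and keeps the proof self-contained.
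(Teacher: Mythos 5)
Your proof is correct, but it follows a genuinely different route from the paper's. The paper handles the joint measurability and the continuity equation in one stroke by the classical device of \cite[Theorem 8.3.1]{AGS}: it first reparametrizes time via $g(t)=\tfrac{1}{L+1}\int_0^t(\Psi(s)+1)\,\ud s$ so that the reparametrized slope $\bar\Psi$ becomes bounded (this is essential, since only $\Psi\in L^1(0,1)$ is assumed), then defines a single linear functional $\mathcal L[\nabla\varphi]=-\int_0^1\int_{\overline\Theta}\partial_t\varphi\,\bar\mu_t(\ud\theta)\,\ud t$ on gradients of tensor products, proves it is bounded on the space--time space $L^2_{\bar\mu}((0,1)\times\overline\Theta;\R^d)$ with $\bar\mu=\bar\mu_t\otimes\ud t$, and obtains $\bar{\mathbf v}$ by one global Riesz representation; joint measurability is then automatic, the fiberwise identification with $\mathbf w_t$ follows from DuBois--Reymond, and the final field is recovered by undoing the time change. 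You instead stay entirely fiberwise and build the Borel dependence on $t$ by hand: a Galerkin projection with Gram matrices and Moore--Penrose pseudoinverses (Borel in $t$, since the rank strata are Borel and the data $G^N(t)$, $b^N(t)$ are Borel), followed by an explicit Borel choice of indices $N_k(t)$ to turn fiberwise $L^2_{\mu_t}$-convergence into a jointly Borel pointwise limit. Your route avoids the time reparametrization altogether --- it never needs $\Psi$ in better than $L^1$ because Riesz is only ever applied on each fiber, where it was already used in Theorem \ref{abs} --- at the price of heavier measurability bookkeeping; the paper's route is shorter and delegates the measurability entirely to the product-measure Riesz theorem, at the price of the auxiliary rescaling and the subsequent change of variables in the norm identity. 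Both arguments verify the continuity equation the same way in the end, by testing on factorized functions and invoking the standard density of their span in $C^1_c((0,1)\times\overline\Theta)$.
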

\begin{proof}
Let $L:=\|\Psi\|_{L^1(0,1)}$ and let $g:[0,1]\to[0,1]$ be defined as $g(t)=\frac{1}{L+1}\int_0^t(\Psi(s)+1)\,ds$, thus $g$ is a strictly increasing absolutely continuous function such that $g'> 0$ for a.e. $t\in(0,1)$. Therefore $h:=g^{-1}$ is a also a strictly increasing absolutely continuous map and 
$
1/h'(t)={g'(g^{-1}(t))}
$
holds for a.e. $t\in(0,1)$. We define a new curve in $\mathcal{P}(\overline\Theta)$ by $[0,1]\ni t\mapsto\bar\mu_t:=\mu_{h(t)}$. It is clear that $\bar\mu_t$ itself satisfies assumption i) of Theorem \ref{abs}, as the mapping $t\mapsto\int_\Theta\psi(\theta)\,\mu_{h(t)}(d\theta)$ is the composition of an absolutely continuous function with an absolutely continuous strictly increasing function. Moreover, we have
\begin{align}\label{firstline}
\bar\Psi(t):&=\sup_{ \psi\in \mathrm{span}\mathcal D}\left\{\frac{\ud}{\ud s}\int_{\overline\Theta}\psi(\theta)\,\bar\mu_{s}(\ud\theta){\Bigg{|}_{s=t}}:
 \int_{\overline\Theta} |\nabla\psi(\theta)|^2\bar\mu_t(\ud\theta)\le 1\right\}\\
&=h'(t)\sup_{ \psi\in \mathrm{span}\mathcal D}\left\{\frac{\ud}{\ud s}\int_{\overline\Theta}\psi(\theta)\,\mu_s(\ud\theta){\Bigg{|}_{s=h(t)}}:
\int_{\overline\Theta} |\nabla\psi(\theta)|^2\mu_{h(t)}(\ud\theta)\le 1\right\}=h'(t)\Psi(h(t)),\nonumber
\end{align}
therefore $\bar \mu_t$ satisfies also assumption ii) of Theorem \ref{abs}. Notice that $\|\bar\Psi\|_{L^1(0,1)}=\|\Psi\|_{L^1(0,1)}$. 
Also, we have for a.e. $t\in(0,1)$
\begin{equation}\label{Linf}
\bar\Psi(t)=h'(t)\Psi(h(t))=\frac{\Psi(g^{-1}(t))}{g'(g^{-1}(t))}=\frac{\Psi(g^{-1}(t))}{\Psi(g^{-1}(t))+1}\,(L+1)\le L+1.
\end{equation}

Next, we consider the set
\[
{\mathscr Y}:=\left\{\varphi\in C^1([0,1]\times\overline\Theta): \varphi(t,\theta)=\eta(t)\psi(\theta),\; \eta\in C^\infty_c(0,1),\; \psi\in \mathcal D\right\}
\]
and we define on $\{\nabla\varphi:\varphi\in\mathrm{span}{\mathscr{Y}}\}$, where $\nabla$ is the gradient in $\theta$, the linear operator 
\[
\mathcal{L}[\nabla \varphi]:=-\int_0^1\int_{\overline\Theta}\partial_t\varphi(t,\theta)\bar\mu_t(\ud\theta)\,dt
\] 
Since $\bar\mu_t$ satisfies assumption i) of Theorem \ref{abs}, we may integrate by parts and get 
\begin{equation}
\label{LT}\begin{aligned}
\mathcal{L}[\nabla\varphi]&=-\int_0^1\eta'(t)\int_{\overline\Theta}\psi(\theta)\,\bar\mu_t(\ud\theta)\,dt=\int_0^1\eta(t)\frac{\ud}{\ud s}\int_{\overline\Theta}\psi(\theta)\,\bar\mu_s(\ud\theta){\Bigg{|}_{s=t}}\,\ud t
\\&=
\int_0^1\frac{\ud}{\ud s}\int_{\overline\Theta}\varphi(t,\theta)\,\bar\mu_s(\ud\theta){\Bigg{|}_{s=t}}\,\ud t.
\end{aligned} \end{equation}
for any $\varphi\in\mathscr{Y}$.
The same equality readily holds for any $\varphi\in\mathrm{span}\mathscr{Y}$. Notice that for any $\varphi\in \mathrm{span}\mathscr{Y}$ and any $t\in[0,1]$, the mapping $\theta\mapsto\varphi(t,\theta)$ belongs to $\mathrm{span} \mathcal D$. 
Therefore, for any $\varphi\in\mathrm{span}\mathscr{Y}$, from \eqref{LT} we may apply \eqref{firstline} along with Cauchy-Schwarz inequality and \eqref{Linf} and get
\[
\begin{aligned}
\left|\mathcal{L}[\nabla\varphi]\right|\le \int_0^1\bar\Psi(t)\left(\int_{\overline\Theta}|\nabla\varphi(t,\theta)|^2\,\bar\mu_t(\ud\theta)\right)^{\frac12}\,\ud t\le
(L+1)\left(\int_0^1\int_{\overline\Theta} |\nabla\varphi(t,\theta)|^2\,\bar\mu_t(\ud\theta)\,\ud t\right)^{\frac12}\!\!.
\end{aligned}
\]
This means that the linear operator $\mathcal{L}$ can be extended as a bounded linear operator on the $L^2_{\bar\mu}((0,1)\times\overline\Theta;\R^d)$ closure of 
$\{\nabla\varphi:\varphi\in\mathrm{span}\mathscr{Y}\}$, where $\bar\mu:=\bar\mu_t\otimes \ud t$ is the Borel probability measure on $(0,1)\times\overline\Theta$ whose disintegration with respect to $\ud t$ is $\bar\mu_t$. 
By Riesz representation theorem, there exists a unique vector field 
$$\bar{\mathbf{v}}\in \overline{\{\nabla\varphi:\varphi\in\mathrm{span} \mathscr{Y}\}}^{L^2_{\bar\mu}((0,1)\times\overline\Theta;\R^d)}$$
such that
\begin{equation}\label{2variables}
\mathcal{L}[\nabla\varphi]=\int_0^1\int_{\overline\Theta} \langle \bar{\mathbf{v}}(t,\theta),\nabla_\theta\varphi(t,\theta)\rangle\,\bar\mu(\ud t\,\ud\theta)\qquad\forall \varphi\in \mathrm{span} \mathscr{Y}.
\end{equation}
By the definition of $\mathcal{L}$, \eqref{2variables} shows that the equality in \eqref{distributionalcontinuityequation} is satisfied for any $\varphi\in\mathscr{Y}$.
A standard approximation argument shows that it is enough to check the continuity equation on factorized functions, and
since $\mathcal D$ is dense in $C^1_c(\overline\Theta)$ (in the $C^1(\overline\Theta)$ norm), we conclude that the couple $(\bar{\mathbf{v}}(t,\cdot),\bar\mu_t)$ satisfies \eqref{distributionalcontinuityequation}.
Moreover, from
 \eqref{LT}-\eqref{2variables}, by de DuBois-Reymond lemma we see that, for a.e. $t\in(0,1)$, $\bar{\mathbf{v}}(t,\cdot)$ satisfies
 \[
 \frac{\ud}{\ud s}\int_{\overline\Theta}\psi(\theta)\,\bar\mu_s(\ud\theta){\Bigg{|}_{s=t}}=\int_{\overline\Theta}\langle\bar{\mathbf{v}}(t,\theta),\nabla\psi(\theta)\rangle\,\bar\mu_t(\ud\theta)\qquad\forall{\psi\in \mathrm{span}\mathcal D}, 
 \]
therefore from \eqref{firstline}
\begin{equation}\label{id1}
\|\bar{\mathbf{v}}(t,\cdot)\|_{L^2_{\bar\mu_t(\overline\Theta;\R^d)}}
=\bar\Psi(t)=h'(t)\Psi(h(t))\quad\mbox{ for a.e. $t\in(0,1)$.}
\end{equation}

Let us define another Borel map $[0,1]\times\overline\Theta\ni(t,\theta)\mapsto \mathbf{v}(t,\theta)\in\R^d$ as $\mathbf{v}(t,\theta)=\bar{\mathbf{v}}(g(t),\theta)g'(t)$. The couple $(\mu_t,\mathbf{v}(t,\cdot))$ satisfies the continuity equation in the sense of \eqref{distributionalcontinuityequation}. This follows from a change of variable in time in \eqref{distributionalcontinuityequation}, see also \cite[Lemma 8.13]{AGS}. Eventually, from the definition of $\mathbf{v}$ and since $h$ is the inverse of $g$, for a.e. $t\in(0,1)$ there holds 
\begin{equation*}
\|\bar{\mathbf{v}}(t,\cdot)\|_{L^2_{\bar\mu_t(\overline\Theta;\R^d)}}=\left(\int_{\overline\Theta}\left|\frac{\mathbf{v}(h(t),\theta)}{g'(g^{-1}(t))}\right|^2\,\mu_{h(t)}(\ud\theta)\right)^{\frac12}=h'(t)\,\left(\int_{\overline\Theta} 
|\mathbf{v}(h(t),\theta)|^2\,\mu_{h(t)}(\ud\theta)\right)^{\frac12},
\end{equation*}
which entails, thanks to \eqref{id1},
\[
\left(\int_{\overline\Theta}|\mathbf{v}(h(t),\theta)|^2\,\mu_{h(t)}(\ud\theta)\right)^{\frac12}=\Psi(h(t))\quad\mbox{ for a.e. $t\in(0,1)$.}
\]
Since $h$ is a strictly increasing absolutely continuous bijection of $[0,1]$ onto itself, we obtain $\|\mathbf{v}(t,\cdot)\|_{L^2_{\mu_t}(\overline\Theta;\R^d)}=\Psi(t)$ for a.e. $t\in(0,1)$.
\end{proof}


\subsection{Lipschitz continuous extensions}
We provide the proof of a Lipschitz extension result, namely Lemma \ref{lm:lipschitzextension}. It is stated in a general framework, where $\V,\X$ and $\lambda$ are as in Subsection \ref{infinitesection}.
\begin{lm} \label{lm:dense}
Let $\Z$ be a subset of $\X$ with $\lambda(\mathbb{Z}) = 0$. Then, for any $x\in\X\setminus\Z$, one has $\lambda(\mathbb K_x)=0$, where 
$
\mathbb K_x := \{y \in \samplespace\setminus\mathbb{Z}\ :\ \mathcal H^1([x,y] \cap \mathbb{Z}) > 0\}.
$
\end{lm}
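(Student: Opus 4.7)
The plan is a translation--reduction to the origin, followed by an application of the slicing property $\lambda 3)$.

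First, I would fix $x\in\X\setminus\Z$ and translate so that $x$ plays the role of the origin: set $A:=\Z-x$. Since $\lambda(\Z)=0$, property $\lambda 2)$ (with $\alpha=1$ and $v=-x$) yields $\lambda(A)=0$, and $A\in\Bcr(\V)$ by Borel measurability of $\Z$. The affine isometry $\V\ni\theta\mapsto\theta-x\in\V$ sends $[x,y]$ onto $[0,y-x]$ and preserves the $1$-dimensional Hausdorff measure, so
\[
\mathcal H^1\bigl([x,y]\cap\Z\bigr)=\mathcal H^1\bigl([0,y-x]\cap A\bigr)\qquad\text{for every }y\in\V.
\]
Consequently $\mathbb K_x-x\subseteq B$, where $B:=\{w\in\V:\mathcal H^1([0,w]\cap A)>0\}$.

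Next, I would invoke $\lambda 3)$ in the form $\lambda(A)=0\Rightarrow\lambda(B)=0$, which is the Fubini--type slicing direction of the axiom and is in force for all the admissible reference measures considered in the paper (Lebesgue measure on $\R^d$, via polar coordinates, and non-degenerate Gaussian measures on $\V$, via the disintegration along one-dimensional subspaces). Combined with the inclusion $\mathbb K_x-x\subseteq B$, this gives $\lambda(\mathbb K_x-x)=0$, and a final application of $\lambda 2)$ (with $\alpha=1$, $v=x$) transports the null set back to $\mathbb K_x$, yielding $\lambda(\mathbb K_x)=0$.

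The only delicate point, aside from the conceptual use of $\lambda 3)$ just highlighted, is a measurability verification: in order to legitimately apply $\lambda 2)$--$\lambda 3)$ one needs $A$ and $B$ to belong to $\Bcr(\V)$. Borelness of $A$ is automatic, while Borelness of the map $w\mapsto\mathcal H^1([0,w]\cap A)$ (and hence of $B$) is obtained by a standard monotone--class argument, approximating $A$ by open sets and using the lower semicontinuity of the length of $[0,w]\cap U$ in $w$ for $U$ open. Once this routine check is in place, the chain $\Z\to A\to B\to \mathbb K_x-x\to \mathbb K_x$ of null--set preservation closes the proof.
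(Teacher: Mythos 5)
Your reduction to the origin via $\lambda 2)$ is fine, but the central step is not: axiom $\lambda 3)$ states the implication $\lambda(B)=0\Rightarrow\lambda(A)=0$ — it is a non-degeneracy condition on $\lambda$, used to pass from ``almost every segment meets $A$ in a $\mathcal H^1$-null set'' to ``$A$ is $\lambda$-null'' — whereas what you need here is exactly the converse, $\lambda(A)=0\Rightarrow\lambda(B)=0$. That converse is not among the assumptions $\lambda 1)$--$\lambda 3)$, and your justification of it (polar coordinates for Lebesgue measure, disintegration for Gaussian measures) only covers two examples, while the lemma must hold for every $\sigma$-finite $\lambda$ satisfying the three axioms. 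After your translation, the statement $\lambda(A)=0\Rightarrow\lambda(B)=0$ \emph{is} the lemma, so as written the argument assumes what it has to prove.

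The missing ingredient is supplied by $\lambda 2)$ together with Fubini, which is how the paper argues (by contradiction): if $\lambda(\mathbb K_x)>0$, pick a bounded $\mathbb U$ with $\lambda(\mathbb K_x\cap\mathbb U)>0$ and write $\mathcal H^1([y,x]\cap\Z)=\|x-y\|\int_0^1\mathds 1_{\Z}((1-t)y+tx)\,\ud t$; integrating over $\mathbb K_x\cap\mathbb U$ in $\lambda(\ud y)$ and exchanging the order of integration, the inner integral becomes the $\lambda$-measure of $\tfrac{1}{1-t}(\Z-tx)$ intersected with $\mathbb K_x\cap\mathbb U$, which vanishes for every $t\in(0,1)$ precisely by $\lambda 2)$ (dilation by $\alpha=\tfrac{1}{1-t}$ plus a translation). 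This forces $\int_{\mathbb K_x\cap\mathbb U}\mathcal H^1([y,x]\cap\Z)\,\lambda(\ud y)=0$, contradicting the strict positivity of the integrand on a set of positive $\lambda$-measure. If you replace the appeal to the ``converse of $\lambda 3)$'' by this Fubini computation, the proof closes; your measurability remark then reduces to the routine joint measurability of $(t,y)\mapsto\mathds 1_{\Z}((1-t)y+tx)$ needed to apply Tonelli.
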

\begin{proof}
Suppose by contradiction that there exists $x\in \X\setminus\Z$ such that $\lambda(\mathbb K_x)>0$. Then there exists a bounded set $\mathbb U$ in $\X$ such that $\lambda(\mathbb K_x\cap \mathbb U)>0$. 
Moreover, by definition of $\mathbb K_x$, for every $y\in\mathbb K_x$ there holds $\mathcal H^1([y,x]\cap\Z)>0$, which together with $\lambda(\mathbb K_x\cap \mathbb U)>0$ implies
\[\begin{aligned}
0<\int_{\mathbb K_x\cap \mathbb U} \mathcal H^1([y,x] \cap \mathbb{Z})\, \lambda(\ud y) &= 
\int_{\mathbb K_x \cap \mathbb U} \|x-y\|\int_0^1\mathds 1_{\mathbb Z}((1-t)y+tx)\,\ud t\,\lambda(\ud y)
\\&\le\sup_{y\in\mathbb U}
\|x-y\|\int_0^1\int_{\mathbb K_x\cap \mathbb U} \ind_{\frac{1}{1-t}(\mathbb{Z}-tx)}(y)\,\lambda(\ud y)\, \ud t 
\end{aligned}\]
where the last inequality is due to Fubini's theorem. But this is a contradiction, since the right-hand side is equal to zero, being the set $\frac{1}{1-t}(\mathbb{Z}-tx)$ of zero $\lambda$-measure for all $t \in (0,1)$.
\end{proof}
\begin{lm}[\bf Lipschitz Extension] \label{lm:lipschitzextension}
Let   $\Z$ be a $\lambda$-null subset of $\X$ and let $\mathbb B(\Z)$ be defined by \eqref{Bset}.  Let $(S, \ud_S)$ be a complete metric space. Let $f : \X\setminus\Z \rightarrow S$. If 
\begin{equation} \label{eq:LipLemmaExt}
d_S(f(x), f(y)) \leq L \|x-y\|\ \qquad \forall\ (x,y) \in \mathbb B(\mathbb Z)
\end{equation}
holds for some $L \geq 0$, then $f$ admits a Lipschitz extension to the whole of $\X$, with the same Lipschitz constant $L$.   
\end{lm}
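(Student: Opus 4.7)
The plan is to proceed in two steps: first upgrade the Lipschitz estimate from pairs in $\mathbb B(\Z)$ to all pairs in $\X \setminus \Z$, and then use completeness of $S$ to extend continuously to $\X$. The bridge between the two steps is Lemma \ref{lm:dense}, which guarantees that, for every $x \in \X \setminus \Z$, the set of points $y$ for which $(x,y) \notin \mathbb B(\Z)$ (i.e., the set $\Z \cup \mathbb K_x$) is $\lambda$-null, hence has dense complement by property $\lambda 1)$.

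For Step 1, fix $x_1, x_2 \in \X \setminus \Z$. Apply Lemma \ref{lm:dense} to both $x_1$ and $x_2$ to see that the set $\mathbb E := \Z \cup \mathbb K_{x_1} \cup \mathbb K_{x_2}$ is $\lambda$-null. Pick any point $m$ on the segment $[x_1,x_2]$, say its midpoint. By $\lambda 1)$, the complement $\X \setminus \mathbb E$ is dense, so there exists a sequence $z_n \in \X \setminus \mathbb E$ with $z_n \to m$. By construction, for each $n$, both pairs $(x_1, z_n)$ and $(z_n, x_2)$ belong to $\mathbb B(\Z)$, hence the assumption \eqref{eq:LipLemmaExt} together with the triangle inequality in $S$ yields
$$
d_S(f(x_1), f(x_2)) \le d_S(f(x_1), f(z_n)) + d_S(f(z_n), f(x_2)) \le L\big(\|x_1 - z_n\| + \|z_n - x_2\|\big).
$$
Letting $n \to +\infty$, continuity of the norm gives $\|x_1 - z_n\| + \|z_n - x_2\| \to \|x_1 - m\| + \|m - x_2\| = \|x_1 - x_2\|$, so that $d_S(f(x_1), f(x_2)) \le L \|x_1 - x_2\|$. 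This proves that $f$ is $L$-Lipschitz on all of $\X \setminus \Z$.

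For Step 2, since $\Z$ is $\lambda$-null, property $\lambda 1)$ implies that $\X \setminus \Z$ is dense in $\X$. As $f : \X \setminus \Z \to S$ is now $L$-Lipschitz with values in the complete metric space $(S, d_S)$, the standard extension theorem for uniformly continuous maps into complete metric spaces provides a unique continuous extension $\bar f : \X \to S$, and a direct passage to the limit in $d_S(f(x_1^n), f(x_2^n)) \le L\|x_1^n - x_2^n\|$ along approximating sequences $x_1^n, x_2^n \in \X \setminus \Z$ shows that $\bar f$ is itself $L$-Lipschitz on $\X$.

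The only delicate point is the choice of the intermediate $z$ in Step 1, which must simultaneously lie outside $\Z$, outside both "bad segment" sets $\mathbb K_{x_1}$ and $\mathbb K_{x_2}$, and close to the segment $[x_1, x_2]$; this is precisely where Lemma \ref{lm:dense} combined with $\lambda 1)$ is essential. The rest is a routine triangle-inequality and density argument.
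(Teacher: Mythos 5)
Your proof is correct and follows essentially the same route as the paper's: both use Lemma \ref{lm:dense} to find intermediate points $z_n$ avoiding $\Z\cup\mathbb K_{x_1}\cup\mathbb K_{x_2}$, apply the triangle inequality through $f(z_n)$, pass to the limit, and then extend by density and completeness. The only (immaterial) difference is that you let $z_n$ converge to the midpoint of $[x_1,x_2]$ whereas the paper lets the intermediate points converge to the endpoint $x_1$; both choices make $\|x_1-z_n\|+\|z_n-x_2\|$ converge to $\|x_1-x_2\|$.
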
 
\begin{proof}
For every $x\in\X\setminus\Z$, let $\mathbb K_x$ be defined as in Lemma \ref{lm:dense}.
First, we prove that $f$ is Lipschitz-continuous on $\X\setminus\Z$. In fact, fix two points $x,y$ in $ \X\setminus \Z$, and choose a sequence
$\{\xi_n\}_{n\geq 1} \subset \mathbb K_x^c \cap \mathbb K_y^c\cap \mathbb Z^c$ converging to $x$. This choice is possible since Lemma \ref{lm:dense} shows that
$\lambda(\mathbb K_x \cup \mathbb K_y\cup\mathbb Z) = 0$, implying that $\mathbb K_x^c \cap \mathbb K_y^c\cap\mathbb Z^c$ is dense in $\X$. 
Moreover, notice that $(\xi_n,x)\in\mathbb B(\Z)$ and $(\xi_n,y)\in\mathbb B(\Z)$ for every $n$, since $\xi_n$ belongs to both $\mathbb K_x^c\setminus\Z$ and $\mathbb K_y^c\setminus\Z$.
Then, invoke \eqref{eq:LipLemmaExt} to obtain, for every $n \in \naturals$, that 
$\ud_S(f(x), f(y)) \leq \ud_S(f(\xi_n), f(x)) + \ud_S(f(\xi_n), f(y)) \leq L( \|x-\xi_n\| + \|\xi_n -y\|)$.
By taking the limit as $n\to+\infty$, we get the desired Lipschitz property on $\X\setminus\Z$. In conclusion, the existence of a Lipschitz extension with same constant $L$ follows from the standard extension result with a dense domain, being $(S,\ud_S)$ complete.
\end{proof}

\subsection{Scaling estimates of the Poincar\'e constant} \label{sectionpoincare}

In view of the applications of our theorems, the log-concavity condition and its variants are the more natural tools, mostly when considering exponential statistical models (Section \ref{sect:exponential}), exchangeability (Section \ref{sect:nobservations}) and  Bayesian consistency (Section \ref{sect:consistency}). Accordingly,
 we summarize some estimates of the Poincar\'e constant in the following statement, providing some extension of the results in \cite{BBCG}. In particular, in view  of our results about Bayesian consistency in Section \ref{sect:consistency}, in this statement we highlight some scaling properties of the Poincar\'e constant that arise by multiplying $V$ by some large $n\in\N$.

\begin{pro}\label{francesi}
Let $V, U\in C^2(\rd)$ be bounded from below and such that 
$\int_{\rd} e^{-V(\theta)-U(\theta)}\,\ud\theta<+\infty$.   
Let $\mu_n(\ud\theta):=e^{-nV-U}\ud\theta$, for any $n\in\N$. The following statements about the squared Poincar\'e constant of $\mu_n$ hold.
\begin{itemize}
\item[\it (1)]  Suppose that  $\alpha>0$ and $h\in\R$ exist such that $\mathrm{Hess}(V)\ge \alpha I$ on $\rd$ and $\mathrm{Hess}(U)\ge hI$ on $\rd$ in the sense of quadratic forms. Then 
$\mathcal C^2[\mu_n]\le (n\alpha+h)^{-1}$ for every $n>-h/\alpha$.
\item[(2)]  Suppose that there exist $\alpha>0$, $c>0$, $R>0$, $h\in\R$, $\ell\in\R$ such that the following conditions hold: $\mathrm{Hess}(V(\theta))\ge \alpha I$ and $\mathrm{Hess}(U(\theta))\ge h I$ in the sense of quadratic forms whenever $|\theta|\le R$, moreover $\theta\cdot\nabla V(\theta)\ge c|\theta|$ and $\theta\cdot \nabla U(\theta)\ge \ell|\theta|$ whenever $|\theta|\ge R$. Then, for every 
$n>(-h/\alpha)\vee((d_R+1-\ell)/c)$,
 \[
\mathcal C^2[\mu_n]\le \frac{\alpha n+h+(cn+\ell-d_R+ nV_R+U_R)\,C_R}{(\alpha n+h)\,(cn+\ell-1-d_R)}
\]  
where $d_R:=(d-1)/R$, $V_R:=\sup_{B_R}|\nabla V|$, $U_R:=\sup_{B_R}|\nabla U|$ and $C_R$ is an explicit universal constant only depending on $R$. 
\item[(3)]  Suppose that there exist $\alpha>0$, $c_1>0$, $c_2>0$, $R>0$, $h\in\R$ such that the following conditions hold: $\mathrm{Hess}(V(\theta))\ge \alpha I$ and $\mathrm{Hess}(U(\theta))\ge h I$ in the sense of quadratic forms whenever $|\theta|\le R$, and 
\begin{equation}\label{positiveassumption}
|\nabla V(\theta)|^2\ge 2c_1+c_2\, [\Delta V(\theta)+\nabla V(\theta)\cdot\nabla U(\theta)]_+
\end{equation} 
whenever $|\theta|\ge R$. Then, for every $n>(1+1/c_2)\vee(-h/\alpha)$,
\[
\mathcal C^2[\mu_n]\le \frac{\alpha n+h+e^{\omega_R}(c_1n+V_R^*+W_R)} {(\alpha n+h)c_1n}
\]  
where $V_R^*:=\sup_{B_R}|\Delta V|$, $W_R:=\sup_{B_R}|\nabla U||\nabla V|$ and $\omega_R:=\sup_{B_R}V-\inf_{\rd}V$. 
\end{itemize}
\end{pro}

By choosing $n=1$ and $U\equiv 0$ in  Proposition \ref{francesi}, we obtain a direct estimate for the Poincar\'e constant of a given finite measure of the form $\mu(\ud\theta)=e^{-V}\ud\theta$, where $V$ is bounded from below (see also \cite{BBCG}). Besides the Bakry-Emery criterion, which requires the Hessian of $V$ to be bounded away from zero, perturbations of convex functions are included. For instance $V(\theta)=\tfrac12\,|\theta|^2-2\cos|\theta|$ satisfies the assumptions of points {\it (2)} and {\it (3)} with $R=1$.

\begin{proofadfrancesi}
Point {\it (1)} is the  Bakry-Emery criterion, see for instance \cite[Theorem 3.1]{MS}. 

Let us consider point {\it (2)}. We shall apply the arguments from \cite{BBCG}. Let $V_n:=nV+U$. First of all, if $|\theta|\ge R$ we get from the assumptions
\begin{equation}\label{cn}
\theta\cdot V_n(\theta)\ge (cn+\ell)|\theta|.
\end{equation}
Let $W(\theta)$ be a $C^2(\rd)$ function such that $W\ge1$ on $\rd$ and such that $W(\theta)=e^{|\theta|}$ if $|\theta|\ge R$. Let $C_R:=\sup_{B_r}|W|+\sup_{B_R}|\nabla W|+\sup_{B_R}|\Delta W|$. Let us introduce the diffusion operator $\mathcal L_{V_n}[\phi]:=\Delta\phi - \langle\nabla\phi,\nabla V_n\rangle$. A computation shows that if $|\theta|\ge R$ there holds
\[
\mathcal L_{V_n} W=\left(\frac{d-1}{|\theta|}+1-\frac{\theta}{|\theta|}\cdot\nabla V_n(\theta)\right)\,W(\theta).
\]
Let $\tau_n:=cn+\ell-1-d_R$, so that $\tau_n>0$ as soon as $n>(d_R+1-\ell)/c$. 
If $|\theta|\ge R$,  from \eqref{cn} we deduce $\mathcal L_{V_n}W(\theta)\le -\tau_n W(\theta)$. If $|\theta|\le R$, we  estimate 
as
\[\begin{aligned}
\mathcal L_{V_n}W(\theta)&=-\tau_n W(\theta)+\tau_n W(\theta)+\mathcal L_{V_n}W(\theta)\\&\le -\tau_nW(\theta)+\tau_n W(\theta)+|\Delta W(\theta)|+|\nabla W(\theta)||\nabla V_n(\theta)|\\
&\le -\tau_n W(\theta)+C_R(1+\tau_n+nV_R+U_R).
\end{aligned}\]
All in all we have $W(\theta)\ge 1$ and $\mathcal L_{V_n}W(\theta)\le -\tau_n W(\theta)+b_n\chi_{B_R}(\theta)$ for every $\theta\in\rd$, where $b_n:=C_R(1+\tau_n+nV_R+U_R)$. By \cite[Theorem 1.4]{BBCG} we conclude that $\mathcal C^2(\mu_n)\le \tfrac1{\tau_n}(1+b_n\,k_R)$, where $k_R$ is the squared Poincar\'e constant of the measure $e^{-V_n}\mathcal L^d\mr B_R$. Since we have by assumption $\mathrm{Hess} V_n\ge (n\alpha+h) I$ in the sense of quadratic forms on $B_R$, the Bakry-Emery criterion yields $k_R\le (\alpha n+h)^{-1}$ as soon as $n>-h/\alpha$. The conclusion follows.

Let us prove point {\it (3)}. The argument is similar. Let $W(\theta)=\exp\{V(\theta)-\inf_{\mathbb R^d}V\}$, $\theta\in\rd$. Let once more $V_n:=nV+U$ and $\mathcal L_{V_n}[\phi]:=\Delta\phi - \langle\nabla\phi,\nabla V_n\rangle$.
 We have after a direct computation $\Delta W=W|\nabla V|^2+W\Delta V$ and then
\begin{equation}\label{computa}
\mathcal L_{V_n}W=\left((1-n)|\nabla V(\theta)|^2+\Delta V(\theta)-\nabla U(\theta)\cdot\nabla V(\theta)\right)W(\theta), \qquad \theta\in\rd.
\end{equation}
Thanks to  assumption \eqref{positiveassumption} we have $(n-1)|\nabla V(\theta)|^2\ge\Delta V(\theta)-\nabla V(\theta)\cdot\nabla U(\theta)+n c_1$ whenever $|\theta|\ge R$ and $n>1+1/c_2$. Therefore, if $n>1+1/c_2$, from \eqref{computa} we obtain $\mathcal L_{V_n}W(\theta)\le- c_1n\,W(\theta)$ whenever $|\theta|\ge R$. On the other hand, if $|\theta|\le R$ we easily estimate from \eqref{computa} as
\[
\mathcal L_{V_n}W(\theta)=-c_1n\,W(\theta)+c_1n\,W(\theta)+\mathcal L_{V_n}W(\theta)\le -c_1n\,W(\theta)+ e^{\omega_R}(c_1n +V_R^*+W_R).
\]
Then, we have $W(\theta)\ge 1$ and $\mathcal L_{V_n}W(\theta)\le -c_1n\,W(\theta)+\tilde b_n\chi_{B_R}(\theta)$ for every $\theta\in\rd$, where $\tilde b_n:=+ e^{\omega_R}(c_1n+V_R^*+W_R)$. By invoking  \cite[Theorem 1.4]{BBCG} and the Bakry-Emery criterion, the conclusion follows by repeating the same argument in the end of the proof of point {\it (2)}. 
\end{proofadfrancesi}

\subsection*{Acknowledgements}
 The authors acknowledge support from the MIUR-PRIN  project  No 2017TEXA3H. E.M. acknowledges support from the INdAM-GNAMPA 2019 project   {\it ``Trasporto ottimo per dinamiche con interazione''}.
E. D.  received funding from the European Research Council (ERC) under the European Union's Horizon 2020 research and innovation programme under grant agreement No 817257. E. D.  gratefully acknowledges the financial support from the Italian Ministry of Education, University and Research (MIUR), ``Dipartimenti di Eccellenza'' grant 2018-2022.

\begin{flushleft}

EMANUELE DOLERA\\  Dipartimento di  Matematica\\
Universit\`a di Pavia\\ Via Adolfo Ferrata 5, 27100 Pavia, Italy\\
emanuele.dolera@unipv.it

\bigskip

%
  EDOARDO MAININI\\
Dipartimento di Ingegneria meccanica, energetica, gestionale e dei trasporti \\
  Universit\`a  degli studi di Genova\\ Via all'Opera Pia, 15 - 16145 Genova, Italy\\ edoardo.mainini@unige.it

\end{flushleft}

\end{document}